\newcommand{\calf}{\mathcal{F}}
\newcommand{\cala}{\mathcal{A}}
\newcommand{\calg}{\mathcal{G}}
\newcommand{\calh}{\mathcal{H}}
\newcommand{\call}{\mathcal{L}}
\newcommand{\ocalg}{\overline{\mathcal{G}}}
\newcommand{\tcalg}{\widetilde {\mathcal{G}}}
\newcommand{\calgSW}{{}_{SW}\calg}
\newcommand{\calgNE}{\calg^{N\!E}}
\def\Ddots{\mathinner{\mkern1mu\raise\p@
\vbox{\kern7\p@\hbox{.}}\mkern2mu
\raise4\p@\hbox{.}\mkern2mu\raise7\p@\hbox{.}\mkern1mu}}
\newtheorem{thm}{Theorem}[section]
\newtheorem{prop}[thm]{Proposition}
\newtheorem{lem}[thm]{Lemma}
\newtheorem{cor}[thm]{Corollary}
\theoremstyle{defn}
\newtheorem{defn}[thm]{Definition}
\newtheorem{example}[thm]{Example}
\theoremstyle{remark}
\newtheorem{remark}[thm]{Remark}
\numberwithin{equation}{section}
\newcommand{\gi}[3]{G_{#1}, G_{#2}, \ldots, G_{#3}}
\newcommand{\gii}[3]{G'_{#1}, G'_{#2}, \ldots, G'_{#3}}
\newcommand{\gr}[3] {\calg_1 = (G_{#1}, G_{#2}, \ldots, G_{#3})}
\newcommand{\re}[2] {\res_{\calg} ( \calg_{#1}, \calg_{#2})}
\newcommand{\ree}[1] {\res_{\calg} ( \calg_{#1})}
\newcommand{\gt}[4] {\graft_{{#1}, {#2}} ( \calg_{#3}, \calg_{#4})}
\newcommand{\la}[2] {\call ( \calg_{#1} \sqcup \calg_{#2}) }
\newcommand\restr[2]{{
  \left.\kern-\nulldelimiterspace 
  #1 
  \vphantom{\big|} 
  \right|_{#2} 
  }}
\newcommand{\e}{\delta}
\def\B{\mathcal{B}}
\newcommand{\za}{\alpha}
\newcommand{\zd}{\delta}
\newcommand{\zD}{\Delta}
\newcommand{\zg}{\gamma}
\newcommand{\zS}{\Sigma}
\DeclareMathOperator{\Bang}{Bang}
\DeclareMathOperator{\Brac}{Brac}
\DeclareMathOperator{\Match}{Match}
\newcommand{\pred}{\textup{pred}}
\newcommand{\suc}{\textup{succ}}
\newcommand{\Int}{\textup{Int}}
\newcommand{\C}{\mathcal{C}}
\newcommand{\A}{\mathcal{A}}
\def\Acal{\mathcal{A}}
\DeclareMathOperator{\res}{Res \,} 
\DeclareMathOperator{\graft}{Graft \,  } 
\DeclareMathOperator{\match}{Match \, }
\begin{document}
\title[ Snake graph calculus II: Self-crossing snake graphs]{Snake graph calculus and cluster algebras from surfaces II: Self-crossing snake graphs}
\author{Ilke Canakci}
\address{Department of Mathematics 
University of Leicester 
University Road 
Leicester LE1 7RH 
United Kingdom}
\email{ic74@le.ac.uk}
\author{Ralf Schiffler}\thanks{The authors were supported by NSF grant DMS-10001637; the first author was also supported by EPSRC grant number EP/K026364/1, UK,  and the second author was also supported by the NSF grants  DMS-1254567, DMS-1101377 and by the University of Connecticut.}
\address{Department of Mathematics, University of Connecticut, 
Storrs, CT 06269-3009, USA}
\email{schiffler@math.uconn.edu}




%

\begin{abstract}
Snake graphs appear naturally in the theory of cluster algebras. For cluster
algebras from surfaces, each cluster variable is given by a formula which is parametrized by
the perfect matchings of a snake graph. In this paper, we continue our study of snake graphs from a combinatorial point of view. We introduce the notions  of abstract snake graphs and abstract band graphs, their crossings and self-crossings, as well as the resolutions of these crossings.
We show that there is a bijection between the set of perfect matchings of (self-) crossing snake graphs and the set of perfect matchings of the resolution of the crossing.
In the situation where the snake and band graphs are coming from arcs and loops in a surface without punctures, we obtain a new proof of skein relations in the corresponding cluster algebra.
\end{abstract}

 \maketitle




\section{Introduction}
 This paper continues the study of abstract snake graphs initiated in \cite{CS}.
  Our goals are, on the one hand, to establish a computational tool for cluster algebras of surface type, which we call  \emph{snake graph calculus}, and, on the other hand, to introduce a new algebraic structure which is not limited to a particular choice of a surface but rather inspired from the combinatorial structure of  all surface type cluster algebras.
This theory has already found the following applications. In \cite{CaSc}, the authors use snake graphs to study extensions of modules over Jacobian algebras and triangles in the cluster category associated to triangulations of unpunctured surfaces, and in \cite{CLS}, the snake graph calculus is used to show that for unpunctured surfaces with exactly one marked point, the upper cluster algebra coincides with the cluster algebra, which was one of the last cases of the mutation finite cluster algebras for which the question was still open. 

Cluster algebras were introduced in \cite{FZ1}, and further developed in \cite{FZ2,BFZ,FZ4}, motivated by combinatorial aspects of canonical bases in Lie theory \cite{L1,L2}. A cluster algebra is a subalgebra of a field of rational functions in several variables, and it is given by constructing a distinguished set of generators, the \emph{cluster variables}. These cluster variables are constructed recursively and their computation is rather complicated in general. By construction, the cluster variables are rational functions, but Fomin and Zelevinsky showed in \cite{FZ1} that they are Laurent polynomials with integer coefficients. Moreover, these coefficients are known to be non-negative \cite{LS4}.

An important class of cluster algebras is given by cluster algebras of surface type \cite{GSV,FG1,FG2,FST,FT}. From a classification point of view, this class is very important, since it has been shown in \cite{FeShTu} that almost all (skew-symmetric) mutation finite cluster algebras are of surface type. For generalizations to the skew-symmetrizable case see \cite{FeShTu2,FeShTu3}. The closely related surface skein algebras were studied in \cite{M,T}.

If $\cala$ is a cluster algebra of surface type, then there exists a surface with boundary and marked points such that the cluster variables of $\cala$ are in bijection with certain isotopy classes of curves, called \emph{arcs}, in the surface. Moreover, the relations between the cluster variables are given by the crossing patterns of the arcs in the surface. In \cite{MSW}, building on earlier work \cite{S2,ST,S3,MS}, the authors gave a combinatorial formula for the cluster variables in cluster algebras of surface type. In the sequel \cite{MSW2}, the formula was the key ingredient for the construction of two bases for the cluster algebra, in the case where the surface has no punctures and has at least 2 marked points.
As an application of the computational tools developed in \cite{CS} and in the present paper, it is proved in \cite{CLS} that the basis construction of \cite{MSW2} also applies to surfaces with non-empty boundary and with exactly one marked point.

In order to construct these bases, one associates Laurent polynomials to certain curves in the surface. If the curve is an arc, these Laurent polynomials are cluster variables and are given by the formula of \cite{MSW} in terms of perfect matchings of snake graphs. If the curve is a closed loop, one needs to replace the snake graph by a band graph, and then the Laurent polynomial is still given in terms of perfect matchings of the band graph, see \cite{MSW2}.

Perfect matchings of certain graphs have also been used in \cite{MSc} to give expansion formulas in the cluster algebra structure of the homogenous coordinate ring of a Grassmannian.

In our previous work \cite{CS}, we have studied the snake graphs from a combinatorial point of view. Instead of constructing snake graphs from arcs in a  fixed surface, we gave an abstract definition of a snake graph and studied the properties of these graphs. These \emph{abstract} snake graphs are not necessarily related to the geometric situation of arcs in a surface. In analogy with the geometric situation, we defined the notion of \emph{crossing} snake graphs and constructed the resolution of a pair of crossing snake graphs as two pairs of snake graphs. We then constructed a bijection between the set of perfect matchings of the pair of crossing snake graphs and the set of perfect matchings of its resolution. We also showed that, in the case where the snake graphs actually correspond to arcs in a surface,  the resolution of crossing snake graphs corresponds precisely to the smoothing of the crossing of the associated arcs. In particular, we obtained a new proof of the skein relations between cluster variables in the cluster algebra.

However, in order to understand the cluster algebra, arcs alone do not provide enough information. One also needs to consider self-crossing curves and closed loops. Self-crossing curves {{may}} appear already after smoothing a crossing of two arcs which have more than one crossing point, and closed loops {{may}} appear  after smoothing a self-crossing. In section~\ref{sect 2} of the present paper, we introduce the notion of self-crossing snake graphs, and we construct the resolutions of the self-crossings in section~\ref{sect 3a}. In order to describe these resolutions, the snake graphs alone are no longer sufficient; one also needs to work with band graphs. In section~\ref{sect 3}, we construct a bijection between the set of perfect matchings of a self-crossing snake graph and the set of perfect matchings of its resolution.
We then show in section~\ref{sect 5} that, in the case where the self-crossing snake graph is actually coming from a self-crossing arc in a surface, the resolution of the snake graph corresponds exactly to the smoothing of the crossing in the curve. We also show {{in section~\ref{sect 7}}} that the corresponding skein relation holds in the cluster algebra.

The step from resolving the crossing of a pair of snake graphs to resolving a self-crossing snake graph is surprisingly difficult. The naive approach of cutting one self-crossing snake graph into two crossing snake graphs and then applying the results of \cite{CS} does not work in general, because, in a self-crossing snake graph, the two positions where the crossing occurs may have an intersection and cannot be separated.
In the geometric setting, this corresponds to a curve that spirals around a boundary component several times, approaching the boundary, and then running away from the boundary, thereby crossing itself several times.

Even for snake graphs that have a geometric interpretation as curves in a surface, there is a fundamental difference between the smoothing of a crossing of curves and the resolution of a crossing of snake graphs. The definition of smoothing is very simple. It is defined as a local transformation replacing a crossing {\Large $\times$} with the pair of segments 
  $\genfrac{}{}{0pt}{5pt}{\displaystyle\smile}{\displaystyle\frown}$ (resp. {$\supset \subset$}).    
But once this local transformation is done, one needs to find representatives inside the isotopy classes of the resulting curves which realize the minimal number of crossings with the fixed triangulation. This means that one needs to deform the obtained curves isotopically, and to 'unwind' them if possible, in order to see their actual crossing pattern, which is crucial for the applications to cluster algebras. This can be quite confusing, especially in a higher genus surface.

The situation for the snake and band graphs is exactly opposite. The definition of the resolution is very complicated because one has to consider many different cases. But once all these cases are worked out, one has a complete list of rules in hand, which one can apply very efficiently in actual computations.

In \cite{CS}, we gave these rules for pairs of crossing snake graphs, in the present paper, we treat the case of self-crossing snake graphs, and in a forthcoming paper \cite{CS3}, we will complete the work by treating crossings of band graphs and self-crossing band graphs.

In the last section, we give an example of an explicit computation of the product of two cluster variables in the cluster algebra of the torus with one boundary component and one marked point.
  
 \noindent \emph{Acknowledgements.}  
{We thank Anna Felikson for suggesting improvements to the presentation of the article.}
  
\section{Abstract snake graphs and abstract band graphs}
\label{sect 2}
 Abstract snake graphs have been introduced in \cite{CS} motivated by the snake graphs appearing in the combinatorial formulas for cluster variables in cluster algebras of surface type  in \cite{Propp, MS, MSW}. Here we introduce the notion of abstract band graphs which is motivated by the band graphs used in \cite{MSW2} to construct a bases for cluster algebras of surface type.

The construction of abstract snake graphs and band graphs is completely detached from triangulated surfaces. Our goal is to study these objects in a combinatorial way. 
We shall simply say snake graphs and band graphs, since we shall always mean \emph{abstract} snake graphs and \emph{abstract} band graphs. 
 
In this section, we recall the constructions of \cite{CS} pertaining to the snake graphs and, at the same time, we develop the analogue constructions for the band graphs.  Throughout
we fix an orthonormal basis of the plane.

\subsection{Snake graphs} A {\bf tile} $G$ is a square of fixed side-length in the plane whose sides are parallel or orthogonal  to the fixed basis.
\begin{center}
  {\small
\begingroup%
  \makeatletter%
  \providecommand\color[2][]{%
    \errmessage{(Inkscape) Color is used for the text in Inkscape, but the package 'color.sty' is not loaded}%
    \renewcommand\color[2][]{}%
  }%
  \providecommand\transparent[1]{%
    \errmessage{(Inkscape) Transparency is used (non-zero) for the text in Inkscape, but the package 'transparent.sty' is not loaded}%
    \renewcommand\transparent[1]{}%
  }%
  \providecommand\rotatebox[2]{#2}%
  \ifx\svgwidth\undefined%
    \setlength{\unitlength}{68.59006958bp}%
    \ifx\svgscale\undefined%
      \relax%
    \else%
      \setlength{\unitlength}{\unitlength * \real{\svgscale}}%
    \fi%
  \else%
    \setlength{\unitlength}{\svgwidth}%
  \fi%
  \global\let\svgwidth\undefined%
  \global\let\svgscale\undefined%
  \makeatother%
  \begin{picture}(1,0.70549664)%
    \put(0,0){\includegraphics[width=\unitlength]{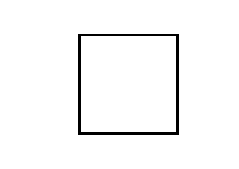}}%
    \put(0.49750979,0.31061177){\color[rgb]{0,0,0}\makebox(0,0)[lb]{\smash{$G$}}}%
    \put(0.00429408,0.31029854){\color[rgb]{0,0,0}\makebox(0,0)[lb]{\smash{West}}}%
    \put(0.79324631,0.31021311){\color[rgb]{0,0,0}\makebox(0,0)[lb]{\smash{East}}}%
    \put(0.36461283,0.61868105){\color[rgb]{0,0,0}\makebox(0,0)[lb]{\smash{North}}}%
    \put(0.36019916,0.00759722){\color[rgb]{0,0,0}\makebox(0,0)[lb]{\smash{South}}}%
  \end{picture}%
\endgroup%
}
\end{center}
We consider a tile $G$ as  a graph with four vertices and four edges in the obvious way. A {\em snake graph} $\calg$ is a connected graph consisting of a finite sequence of tiles $ \gi 12d$ with $d \geq 1,$ such that for each $i=1,\dots,d-1$

\begin{itemize}
 \item[(i)] $G_i$ and $G_{i+1}$ share exactly one edge $e_i$ and this edge is either the north edge of $G_i$ and the south edge of $G_{i+1}$ or the east edge of $G_i$ and the west edge of $G_{i+1}.$
 \item[(ii)] $G_i $ and $G_j$ have no edge in common whenever $|i-j| \geq 2.$
 \item[(ii)] $G_i $ and $G_j$ are disjoint whenever $|i-j| \geq 3.$

\end{itemize} 
An example is given in Figure \ref{signfigure}.
\begin{figure}
\begin{center}
  {\tiny \scalebox{0.9}{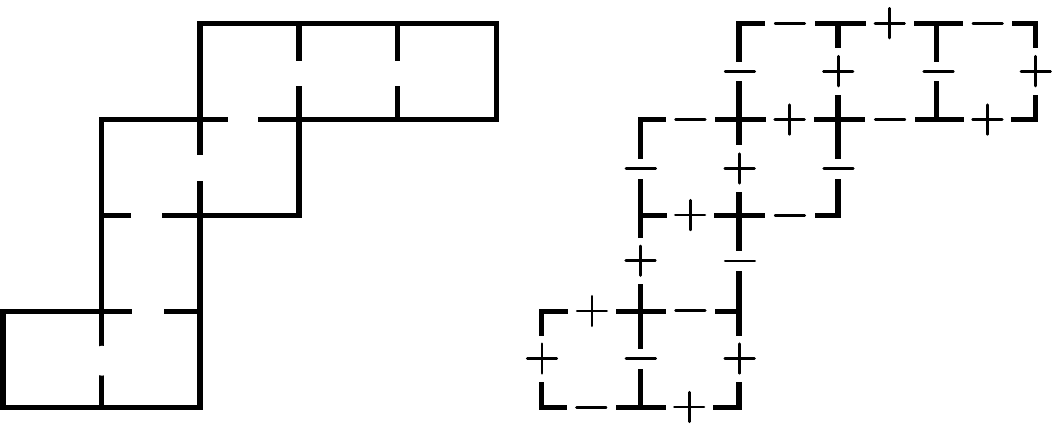}}
 \caption{A snake graph with 8 tiles and 7 interior edges (left);
 a sign function on the same snake graph (right)} 
 \label{signfigure}
\end{center}
\end{figure}
The graph consisting of two vertices and one edge joining them is also considered a snake graph.

 We sometimes use the notation $\calg =(\gi 12d)$ for the snake graph and $\calg [i, i+t] = (\gi i{i+1}{i+t})$ for the subgraph of $\calg$ consisting of the tiles $\gi i{i+1}{i+t}.$
 {One may think of this subgraph as a closed interval inside $\calg$.}

The $d-1$ edges $e_1,e_2, \dots, e_{d-1}$ which are contained in two tiles are called {\em interior edges} of $\calg$ and the other edges are called {\em boundary edges.}  
{Let $\Int(\calg)=\{e_1,e_2,\ldots,e_{d-1}\}$ be the set of interior edges of $\calg$.
We will always use the natural ordering of the set of interior edges, so that $e_i$ is the edge shared by the tiles $G_i$ and $G_{i+1}$.}

{If $i\ge 2$ and $i+t\le d-1$, the notation $\calg(i,i+t)$ means $\calg[i,i+t]\setminus\{e_{i-1},e_{i+t}\}$.  One may think of this subgraph as a open interval inside $\calg$}

{We denote by  $\calgSW$ the 2 element set containing the south and the west edge of the first tile of $\calg$ and by $\calgNE$ the 2 element set containing the north and the east edge of the last tile of $\calg$. 

If  $\calg=(\gi 12d)$ is a snake graph and $e_i$ is the interior edge shared by the tiles $G_i$ and $G_{i+1}$, we define the snake graph $\calg\setminus \textup{pred} (e_i)$ to be the graph obtained from $\calg$ by removing the vertices and edges that are predecessors of  $e_i$, more precisely,
\[ \calg\setminus \pred (e_i) =\calg[i+1 , d].\]
It will be convenient to extend this construction to the edges $e\in \calgNE $, thus 
\[ \calg\setminus \pred (e) =\{e\} \quad\textup{if } e\in \calgNE.\]  
Similarly, we define the snake graph $\calg\setminus \textup{succ} (e_i)$ to be the graph obtained from $\calg$ by removing the vertices and edges that are successors of  $e_i$, more precisely,
\[ \calg\setminus \suc (e_i) =\calg[1 , i].\]
It will be convenient to extend this construction to the edges $e\in \calgSW $, thus 
\[ \calg\setminus \suc (e) =\{e\} \quad\textup{if } e\in \calgSW.\] } 

A snake graph $\calg$ is called {\em straight} if all its tiles lie in one column or one row, and a snake graph is called {\em zigzag} if no three consecutive tiles are straight.
\subsection{Sign function} 

A {\em sign function} $f$ on a snake graph $\calg$ is a map $f$ from the set of edges of $\calg$ to $\{ +,- \}$ such that on every tile in $\calg$ the north and the west edge have the same sign, the south and the east edge have the same sign and the sign on the north edge is opposite to the sign on the south edge. See Figure \ref{signfigure} for an example.
%
%

Note that on every snake graph  with at least one tile, there are exactly two sign functions.

{Although the definition of sign function may not seem natural at first sight, it has a geometric meaning which is explained in Remark \ref{rem sign}. }

\subsection{Band graphs}\label{sect band} {Band graphs are obtained from snake graphs by identifying a boundary edge of the first tile with a boundary edge of the last tile, {where both edges have the same sign}. We use the notation $\calg^\circ$ for general band graphs, indicating their circular shape, and we also use the notation $\calg^b$ if we know that the band graph is constructed by glueing a snake graph $\calg$ along an edge $b$.} 

More precisely, to define a band graph $\calg^\circ$, we start with an abstract snake graph $\calg=(\gi 12d)$ with $d\ge 1$, and fix a sign function on $\calg$. Denote by $x$ the southwest vertex of $G_1$, let $b\in\calgSW$ the south edge (respectively the west edge) of $G_1$, and let $y$ denote the other endpoint of $b$, see Figure \ref{figband}. Let $b'$ be the unique edge in $\calgNE$ that has the same sign as  $b$, and let $y'$ be the northeast vertex of $G_d$ and $x'$ the other endpoint of $b'$.
\begin{wrapfigure}{l}{0.15\textwidth}
  \begin{center}
    \scalebox{0.15}{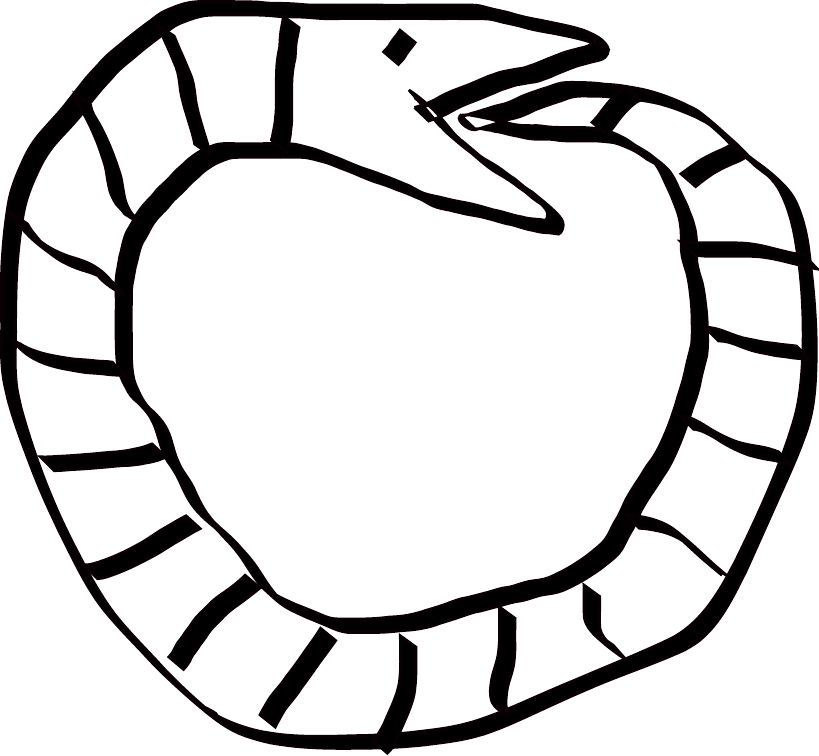}
  \end{center}
\end{wrapfigure} 
\indent Let $\calg^b$ denote the graph obtained from $\calg $ by identifying the edge $b$ with the edge $b'$ and the vertex $x$ with $x'$ and $y$ with $y'$. The graph $\calg^b$ is called a \emph{band graph} or \emph{ouroborus}\footnote{Ouroboros: a snake devouring its tail.}. Note that non-isomorphic snake graphs can give rise to isomorphic band graphs. See Figure \ref{figband} for an example.


\begin{figure}
\begin{center}
  {\Large \scalebox{.7}{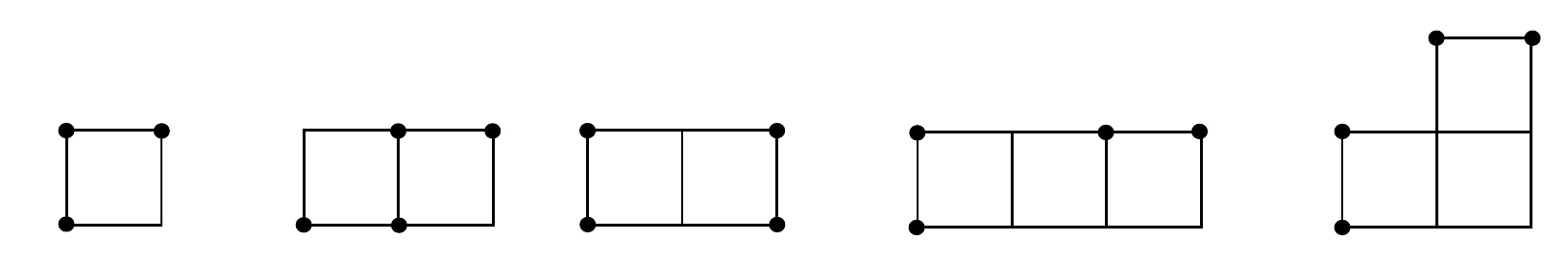}}
 \caption{Examples of small band graphs; the two band graphs with 3 tiles are isomorphic.}
 \label{figband}
\end{center}
\end{figure}

The interior edges of the band graph $\calg^b$ are by definition the interior edges of $\calg$ plus the glueing edge $b=b'$.
Given a  band graph $\calg^\circ$ with an interior edge $e$, we denote by $\calg^\circ_e $ the snake graph obtained by cutting $\calg^\circ$ along the edge $e$. 
Note that  $(\calg^\circ_e)^e = \calg^\circ$, for all band graphs $\calg^\circ$ and that $({\calg^b})_b=\calg$, for all snake graphs $\calg$.
Moreover, if $\calg^\circ$ has $d$ interior  edges $e_1,e_2,\ldots,e_d$ then the $d$ snake graphs  $\calg^\circ_{e_i}$, $i=1,\dots,d $,
are not necessarily distinct.

\begin{defn}\label{def group}
 Let  $\mathcal{R}$  denote the free abelian group generated by all isomorphism classes of {finite disjoint} unions of snake graphs and band graphs.  {If $\calg$ is a snake graph, we also denote its class in $\mathcal{R}$ by $\calg$, and we say that $\calg\in \mathcal{R}$ is a {\em positive} snake graph  and that its inverse  $-\calg\in \mathcal{R}$ is a {\em negative} snake graph.}
 \end{defn}

\subsection{Labeled snake and band graphs}
{
 A \emph{labeled} snake graph is a snake graph in which each edge and each tile carries a label or weight. 
For example, for snake graphs from cluster algebras of surface type, these labels are  cluster variables.

Formally, a labeled snake graph is a snake graph $\calg$ together with two functions \[\{\textup {tiles in }\calg\}\to \calf \qquad\textup{ and }\qquad \{\textup {edges in }\calg\}\to \calf,\] 
where $\calf$  is a  set.
Labeled band graphs are defined in the same way. 

 Let  $\mathcal{LR_\calf}$  denote the free abelian group generated by all isomorphism classes of unions of labeled snake graphs and labeled band graphs with labels in $\calf$. 
}
\subsection{Overlaps and self-overlaps} 

Let $\calg_1=(\gi 12d)$ and $\calg_2=(\gii 12{d'})$ be two snake graphs. We say that $\calg_1$ and $\calg_2$ have an {\em overlap} $\calg$ if $\calg$ is a snake graph consisting of at least one tile and there exist two embeddings $i_1 : \calg \rightarrow \calg_1,$ $i_2: \calg \rightarrow \calg_2$ which are maximal in the following sense.

\begin{itemize}
\item[(i)] If $\calg$ has at least two tiles and if there exists a snake graph $\calg'$ with two embeddings $i'_1: \calg' \to \calg_1,$ $i'_2: \calg \to \calg_2$ such that $i_1(\calg) \subseteq i'_1(\calg')$ and $i_2(\calg) \subseteq i'_2(\calg')$ then $i_1(\calg) = i'_1(\calg')$ and $i_2(\calg) = i'_2(\calg').$
\item[(ii)] If $\calg$ consists of a single tile then using the notation $G_k=i_1(\calg)$ and $G'_{k'}= i_2(\calg),$ we have
\begin{itemize}
 \item[(a)] $k \in \{ 1,d \}$ or $k' \in \{ 1, d' \}$ or
 \item[(b)] $1<k<d,$ $1<k'<d'$ and the subgraphs $(G_{k-1},G_k,G_{k+1})$ and $(G'_{k'-1},G'_{k'},G'_{k'+1})$ are either both straight or both zigzag subgraphs.
\end{itemize}
\end{itemize}
An example of type (i) is shown on the left in Figure \ref{overlap} and an example of type (ii)(b) on the right   in the same figure.
\begin{remark}
Snake graphs may have several overlaps with respect to different snake graphs $\calg$.
\end{remark}
\bigskip

\begin{figure}
\begin{center}
\scalebox{0.5} { 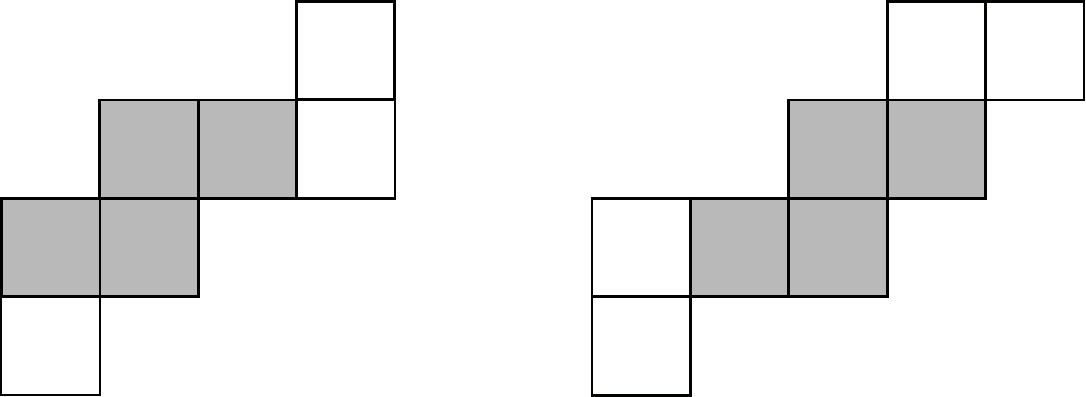 \hspace{2in} 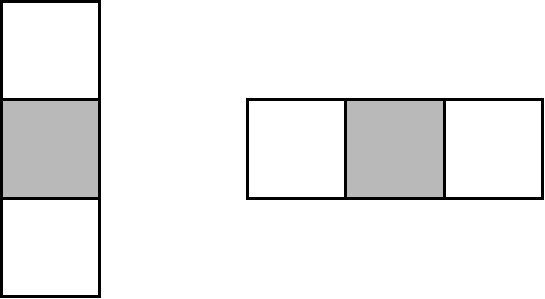}
 \caption{Two snake graphs with overlap shaded (left); two snake graphs with overlap consisting of a single tile shaded (right)}
 \label{overlap}
 \end{center}
\end{figure}

\begin{figure}
\begin{center}
\scalebox{1} { 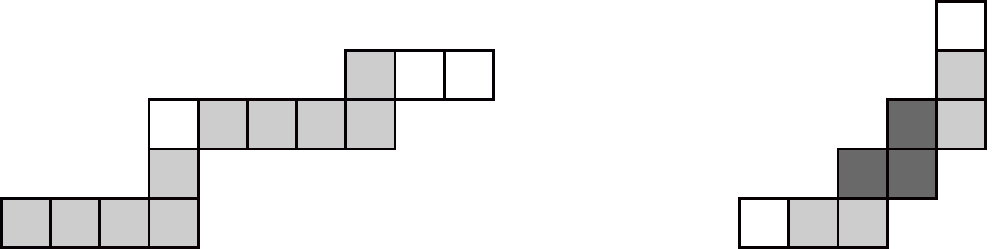}
 \caption{Two snake graphs with self-overlap $(G_s,\ldots,G_t) $ and $(G_{s'},\ldots,G_{t'})$. The self-overlap on the right has an intersection consisting of 3 tiles $G_{s'},G_{s'+1}$ and $G_t$.}
 \label{figselfoverlap}
 \end{center}
\end{figure}

\begin{figure}
\begin{center}
\scalebox{1.2} {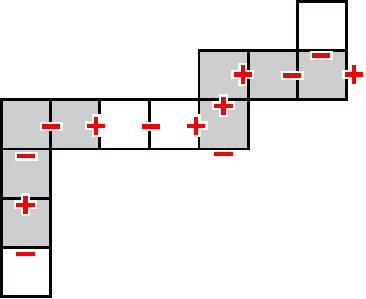}
 \caption{Example of a snake graph with crossing self-overlap (shaded) in the opposite direction. }
 \label{figselfoverlap2}
 \end{center}
\end{figure}

We say that a snake graph $\gr12d$ has a \emph{self-overlap} $\calg$ if $\calg$ is a snake graph and there exist two embeddings $i_1 : \calg \rightarrow \calg_1,$ $i_2: \calg \rightarrow \calg_1$ which satisfy the conditions (i) and (ii) above. Examples of  self-overlaps are shown in Figures \ref{figselfoverlap} and \ref{figselfoverlap2}.

Let $s,t,s',t'$ be such that $i_1(\calg)=\calg_1[s,t]$ and $i_2(\calg)=\calg_1[s',t']$ with $s\le t, s'\le t'$ and suppose without loss of generality that $s<s'.$
The self-overlap is said to have an \emph{intersection} if $i_1(\calg)\cap i_2(\calg)$ contains at least one edge, that is $s'\le t+1$. In this case, we say $\calg_1$ has an \emph{intersecting self-overlap}, see the right picture in Figure \ref{figselfoverlap}.
 
We may assume without loss of generality that the embedding $i_1:\calg\to\calg_1[s,t]$ maps the southwest vertex of  the first tile of $\calg$ to the southwest vertex of  $G_s$ in $\calg_1[s,t]$. 
We then say that the self-overlap is in the \emph{same direction} if the embedding $i_2:\calg\to\calg_1[s',t']$  maps the southwest vertex of  the first tile of $\calg$ to the southwest vertex of  $G_{s'}$ in $\calg_1[s',t']$, and we say that the self-overlap is in the \emph{opposite direction} if $i_2$ maps this vertex to the northeast vertex of $G_{t'}$ in $\calg_1[s',t']$.
The self-overlaps in Figure \ref{figselfoverlap} are in the same direction and the self-overlap in Figure~\ref{figselfoverlap2} is in the opposite direction.

\begin{remark}
\begin{enumerate}
\item The notion of direction depends on the embeddings $i_1$ and $i_2$ and not only on the subgraphs $\calg_1[s,t]$ and $ \calg_1[s',t']$. See Figure \ref{opposite2} for an example where  $\calg_1[s,t]$ and $ \calg_1[s',t']$ can be considered as overlap in either direction.
\item  $\calg_1$ may have an intersecting self-overlap such that the intersection of  $i_1(\calg)$ and $i_2(\calg)$   is a single edge. In this case, we have $e_t=e_{s'-1}$, {see Figure \ref{fig(s'=t+1)} for examples.}
\end{enumerate}
\end{remark}

{For labeled snake graphs, we define overlaps by adding the  requirement that the embeddings $i_1$ and $i_2$ are label preserving.}

\subsection{Crossing overlaps}

Let $\gr 12d,$ ${{\calg_2 = (G'_1, G'_2, \ldots, G'_{d'})}}$ be two snake graphs with $d,d'\ge1$, with overlap $\calg$ and embeddings $i_1(\calg)=\calg_1[s,t]$ and $i_2(\calg)={{\calg_2[s',t']}}$, and suppose without loss of generality that $s \leq t$ and $s' \leq t'.$ Let $e_1, \dots, e_{d-1} $ (respectively $e'_1, \dots, e'_{d'-1}$) be the interior edges of $\calg_1$ (respectively $\calg_2$.) Let $f$ be a sign function on $\calg.$ Then $f$ induces a sign function $f_1$ on $\calg_1$ and $f_2$
 on $\calg _2.$ Moreover, since the overlap $\calg$ is maximal, we have 
 
\[\begin{array}{rcll}
 f_1(e_{s-1}) &=&-f_2(e'_{s'-1}) & \mbox{ if }  s>1, s'>1\\
f_1(e_{t})&=&-f_2(e'_{t'})&\mbox{ if }t <d, t'< d'.
\end{array}
\]%
 
\begin{defn} \label{crossing} We say that $\calg_1$ and $\calg_2$ {\em cross in} $\calg$ if one of the following conditions hold.

\begin{itemize}
 \item[(i)] \label{i}
\[ \begin{array}{lrclll}
 &f_1(e_{s-1})& =& -f_1(e_t) & \mbox{ if } &s>1, t<d   \\
 \mbox{ or}\\
&  f_2 (e'_{s'-1})&=&-f_2(e'_{t'}) & \mbox{ if } &s'>1, t'<d'  
\end{array}\]
%
 \item[(ii)]  \[\begin{array}{lrcllcc}
 &f_1(e_{t}) &=& f_2(e'_{s'-1})& \mbox{ if } &s=1, t<d, s'>1, t'=d'  \\
  \mbox{ or}\\
&f_1 (e_{s-1})&=&f_2(e'_{t'}) & \mbox{ if } &s>1 , t=d, s'=1, t'<d' &
\end{array}\] 
\end{itemize}
\end{defn} 

For example, the two snake graphs on the left hand side of Figure \ref{overlap} cross in the shaded overlap because $f_1(e_{s-1}) = -f_1(e_t)$. The two snake graphs on the right hand side of the same figure cross for the same reason.
\begin{remark}
\begin{itemize}
 \item[1.] The definition of crossing does not depend on the choice of the sign function.
 \item[2.] $\calg_1$ and $\calg_2$ may still cross if $s=1$ and $t=d$ because they may satisfy condition \ref{i}(i).
\end{itemize}
\end{remark}

\subsection{Crossing self-overlaps} 
Let $\gr12d$ be a snake graph with self-overlap $i_1(\calg)=\calg_1[s,t]$ and $i_2(\calg)=\calg_1[s',t']$, where $s\le s'$. Let $f$ be a sign function on $\calg_1$.

\begin{defn}\label{def self-crossing}
 With the above  notation, we say that $\calg_1$ has a \emph{self-crossing} (or \emph{self-crosses}) in $\calg$ if  the following two conditions hold
\begin{itemize}
 \item[(i)] \label{2.4i}
\[ \begin{array}{lrcllclrclll}
 &f(e_{s-1})& =& -f(e_t) & \mbox{ if } s>1;   &
 \mbox{ or}
&  f (e_{s'-1})&=&-f(e_{t'}) & \mbox{ if }  t'<d;  \ and
\end{array}\]
 \item[(ii)] 
  \[\begin{array}{lrcllcc}
 &f(e_{t}) &=& f(e_{s'-1}).&
\end{array}\] 
\end{itemize}
\end{defn}

{
\begin{remark}
If $s>1$ and $t'<d$ then either one of the conditions in Definition~\ref{def self-crossing}(i) implies the other. Indeed, this follows from the two overlap conditions $f(e_{s-1})=-f(e_{s'-1})$ and $f(e_t)=-f(e_{t'})$. 
\end{remark}
}
\begin{example}
 Consider the self-overlaps  in Figure \ref{figselfoverlap}. In the example on the left side of the figure, the edge $e_t$ is the edge shared by the first shaded region and the first white region, and the edge $e_{s'-1}$ is the edge shared by the first white region and the second shaded region. Thus $f(e_t)= f(e_{s'-1})$. Moreover, the edge $e_{t'}$ is the edge shared by the second shaded region and the second white region. Thus $f(e_{s'-1})=-f(e_{t'})$  and the snake graph has a self-crossing in this self-overlap. 

The example on the right side of  Figure \ref{figselfoverlap}, the edge $e_t$ is the edge shared by the dark shaded region and the second light shaded region, and the edge $e_{s'-1}$ is the edge shared by the dark shaded region and the first light shaded region. Thus $f(e_t)= f(e_{s'-1})$. Moreover, $e_{t'}$ is the edge shared by the second light shaded region and the second white region. Thus $f(e_{s'-1})=-f(e_{t'})$   and  there is a self-crossing in this self-overlap. 
\end{example}
\begin{example}
 Now consider  the example in  Figure \ref{figselfoverlap2}. We have $f(e_{s-1})=- $ and $f(e_t)=+$, and $f(e_t)=+=f(e_{s'-1})$ again giving a  self-crossing. 
\end{example}

\begin{remark}
 \begin{enumerate}
\item The definition of self-crossing does not depend on the choice of the sign function $f$.
\item 
The terminology `self-cross' comes from snake graphs that are associated to generalized arcs in a surface. We shall show in  Theorem \ref{thm cross} that crossings and self-crossings of arcs in a surface  correspond precisely to crossings and self-crossings of snake graphs  in an overlap.

\end{enumerate}

\end{remark}
\section{Resolutions}\label{sect 3a}
{In this section, we define the resolutions of crossings and self-crossings. Given a pair of crossing snake graphs, or a single self-crossing snake graph, the resolution of the crossing consists of a sum of two elements in the group $\mathcal{R}$. In a forthcoming paper \cite{CS3}, we will introduce a ring structure on the group $\mathcal{R}$ and consider the ideal generated by all resolutions. Thus in the resulting quotient ring the crossing pair of snake graphs  (or the self-crossing snake graph) is equal to its resolution. This quotient ring is strongly related to cluster algebras from surfaces.}
 
\subsection {Resolution of crossing}  \label{sect res}
 Let $\calg_1,$ $\calg_2$ be two snake graphs crossing in an overlap $\calg=\calg_1[s,t]=\calg_2[s',t']$. Recall that we use the notation $\calg_k[i,j]$ for the subgraph of $\calg_k$ given by the tiles with indices $i, i+1, \dots, j.$ Let $\ocalg_k[j,i]$ be the snake graph obtained by reflecting $\calg_k[i,j]$ such that the order of the tiles is reversed.

We define four connected subgraphs $\calg_3$--$\calg_6$ below. 
In the definition of $\calg_5$ and $\calg_6$, we shall use the notation 
\[\calg_5'=\calg_1[1,s-1] \cup \ocalg_2[s'-1,1] \qquad \textup{ and } \qquad \calg_6'=\ocalg_2[d',t'+1] \cup \calg_1[t+1,d], \]
where, in the definition of $\calg_5'$, the two subgraphs are glued along the north edge of $G_{s-1}$ and the east edge  of $G'_{s'-1}$ if $G_s$ is east of $G_{s-1}$ in $\calg_1$; and along the east  edge of $G_{s-1}$ and the north edge  of $G'_{s'-1}$ if $G_s$ is north of $G_{s-1}$ in $\calg_1$, and, in the definition of $\calg_6'$, the two subgraphs are glued along the west edge  of $G_{t+1}$ and the south edge  of $G'_{t'+1}$ if $G_{t+1}$ is north of $G_{t}$ in $\calg_1$; and along the south edge  of $G_{t+1}$ and the west edge  of $G'_{t'+1}$ if $G_{t+1}$ is east of $G_{t}$ in $\calg_1$.
Let $f_5$ be a sign function on $\calg_5'$ and $f_6 $ a sign function on $\calg_6'$.

We define four connected subgraphs as follows, see Figure \ref{figres} for examples. 
\begin{align*}
 \calg_3&=\calg_1[1,t] \cup \calg_2[t'+1, d'] \mbox{ where the adjacency of the two subgraphs is induced by } \calg_2. \\
\calg_4&=\calg_2[1,t'] \cup \calg_1[t+1,d] \mbox{ where the adjacency of the two subgraphs is induced by } \calg_1.\\
 \calg_5 &=
\begin{cases} \calg_5'&  \parbox[t]{.55\textwidth}{ if $s>1$, $s'>1$ }\\
 \calg_5'\setminus \suc (e) &     \parbox[t]{.65\textwidth}{ if $s'=1$ where $e$ is the last edge in $\Int(\calg_5') \cup {}_{SW}\calg_5'$ such  that $f_5(e)=f_5(e_{s-1})$; }  \\
 \calg_5'\setminus \pred (e) &     \parbox[t]{.65\textwidth}{ if $s=1$ where $e$ is the first edge in $\Int(\calg_5') \cup \calg_5'^{N\!E}$ such  that $f_5(e)=f_5(e'_{s-1})$; }  \\ 
\end{cases}\\ 
\calg_6&=
\begin{cases}
\calg_6'&  \parbox[t]{.55\textwidth}{ if $t<d$, $t' <d'$;}\\
\calg_6'\setminus \suc (e) &     \parbox[t]{.65\textwidth}{ if $t=d$, where $e $  is the last edge in $\Int(\calg_6') \cup {}_{SW}\calg_6'$ such  that $f_6(e)=f_6(e'_{t'})$; }   \\
\calg_6'\setminus \pred (e) &     \parbox[t]{.65\textwidth}{ if $t'=d'$, where $e $  is the first edge in $\Int(\calg_6') \cup \calg_6'^{N\!E}$ such  that $f_6(e)=f_6(e_{t})$. } 
\end{cases}\\
\end{align*}

\begin{defn} \label{resolution}
 In the above situation, we say that the element $(\calg_3 \sqcup \calg_4 ) + (\calg_5 \sqcup \calg_6 )\in \mathcal{R}$
   is the {\em resolution of the crossing} of $\calg_1$ and $\calg_2$ at the overlap $\calg$ and we denote it by $\res_{\calg} (\calg_1,\calg_2).$ 
\end{defn}

If $\calg_1, \calg_2$ have no crossing in $\calg$ we let $\res_{\calg}  (\calg_1,\calg_2)=  \calg_1 \sqcup \calg_2. $

\begin{remark}
The pair $(\calg_3,\calg_4)$ still has an overlap in $\calg$ but without crossing. The pair  $(\calg_5,\calg_6)$ can be thought of as a reduced symmetric difference of $\calg_1$ and $\calg_2$ with respect to the overlap $\calg$.
\end{remark}

\begin{figure}
\begin{center}
\scalebox{0.8}{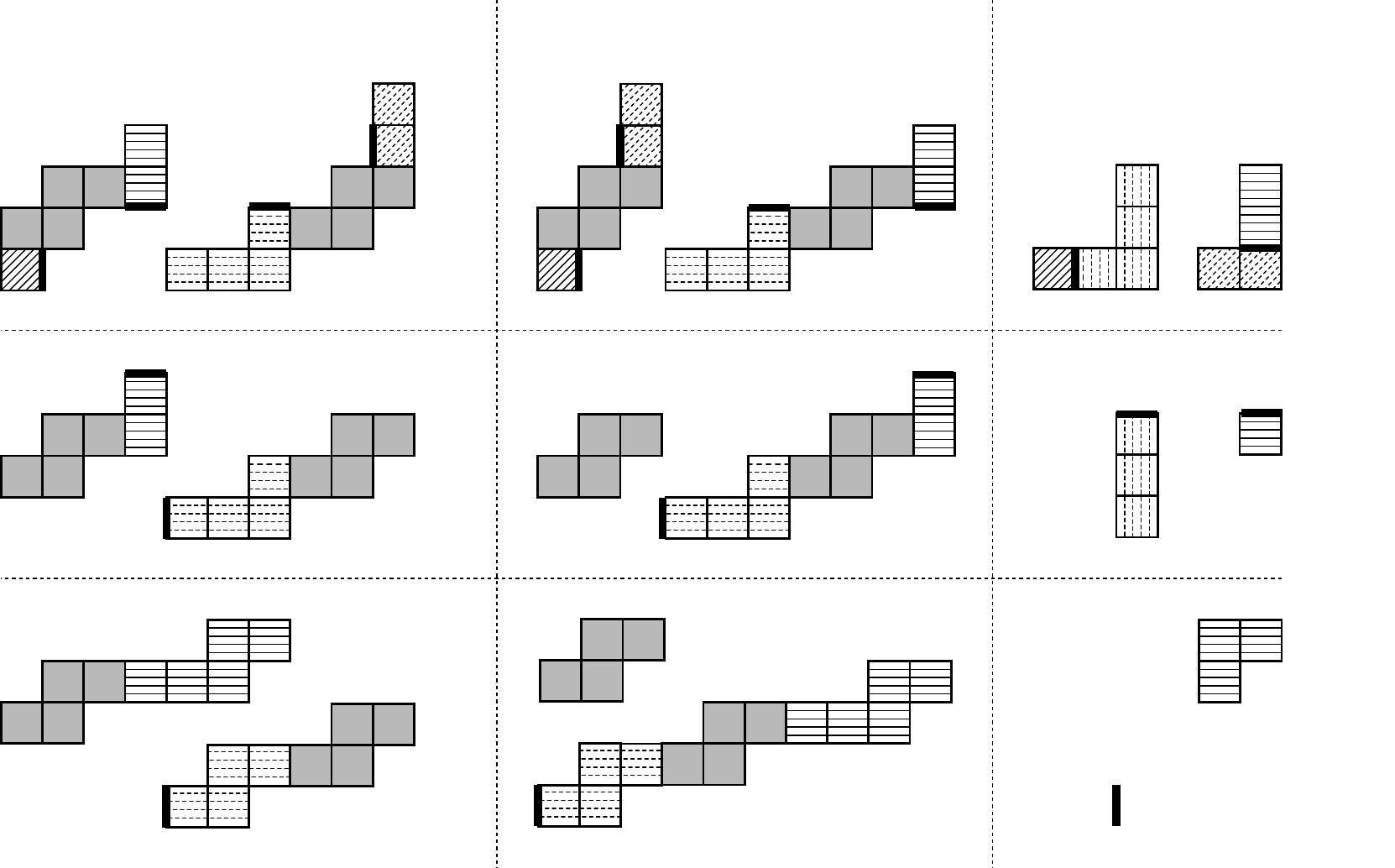}
\caption{Examples of resolutions: $s>1,s'>1,t<d,t'<d'$ in the first row;  $s=1, t'=d'$ in the second and third row;}
\label{figres}
\end{center}
\end{figure}

\subsection{Resolution of self-crossing} \label{sect resolution}
To define the resolution of a self-crossing, we construct two snake graphs and a band graph from the self-crossing snake graph. 
{We consider these snake and band graphs as elements of the group $\mathcal{R}$ of Definition~\ref{def group}. In particular, we allow them to be negative.}
In section \ref{sect 3}, we show that there is a bijection between the set of perfect matchings of a self-crossing snake graph and the set of perfect matchings of the resolution. In sections \ref{sect 5} {and \ref{sect 7}}, we show that this construction is related to multiplication formulas given by skein relations in cluster algebras.

Let $\calg_1$ be a self-crossing snake graph with self-overlap
$i_1(\calg)=\calg_1[s,t]\cong \calg_1[s',t']=i_2(\calg).$ We consider two cases.

\emph{Case 1. Overlap in the same direction.} We define the following  connected graphs. 
\[\begin{array}
{rcll}
\calg_3 &=& \calg_1[1,t]\cup\calg_1[t'+1,d], \parbox[t]{.60\textwidth}{where the adjacency of the subgraphs is induced by $\calg$;}\\ \\
\calg_4^\circ&= &\calg_1^b[s,s'-1], \parbox[t]{.70\textwidth}{ where $b=e_{s'-1}$ is the interior edge shared by $G_{s'-1}$ and $G_{s'}$;} \\ 
&=& { \calg_1^{b'}[t+1,t'],\parbox[t]{.70\textwidth}{ where $b'=e_{t}$ is the interior edge shared by $G_{t}$ and $G_{t+1}$;} } \\
\end{array}\]
and $\calg_{56}$ depends on several cases {and is defined below. We illustrate many of these cases in the figures~\ref{fig(s'<t)} -- \ref{opposite2}. These figures also show geometric realizations of the snake graphs in triangulated surfaces; this geometric construction of snake graphs is explained in section \ref{sect 4}.} Note however that not every self-crossing abstract snake graph has a geometric realization in an unpunctured surface.

\begin{enumerate}
\item If $s'\le t$ (see Figure \ref{fig(s'<t)}) then \[
  \calg_{56} =-\calg_1[1,s-1]\cup\calg_1[s',t]\cup \calg_1[t'+1,d], 
 \]
{where the adjacencies of the subgraphs are induced by $\calg$;}

\item If $s'>t+1$ (see Figures \ref{fig(s'>t)} and \ref{fig(s'>t)2}) then $\calg_{56}$ is defined to be a subgraph of the following graph  
\[\calg_{56}' =  \calg_1[1,s-1]\cup\overline\calg_1[s'-1,t+1]\cup\calg_1[t'+1,d] ,\] {where the first two graphs are glued along the unique boundary edge of $G_{s-1}$ which is north or east and the unique boundary edge of $G_{s'-1}$ which is north or east, whereas the second two graphs are glued along the unique boundary edge of $G_{t+1}$ which is south or west and the unique boundary edge of $G_{t'+1}$ which is south or west.  }
 Let $f_{56} $ be a sign function on $\calg_{56}'$.
 
\begin{enumerate}
\item if $s\ne 1 $ and $t'\ne d$ 
\[ \calg_{56} = \calg_{56}' , \] 

\item  if $s= 1 $ and $t'\ne d$ (see Figure \ref{fig(s'>t)}) 
\[ \calg_{56} = \calg_{56}'\setminus\pred(e),\] \textup{where $e$ is the first edge in $\Int(\calg_{56}')\cup\calg_{56}'^{N\!E}$ such that $f_{56}(e)=f_{56}(e_{s'-1})$.  } 

\item  if $s\ne 1 $ and $t'= d$  
\[ \calg_{56} =  \calg_{56}'\setminus \suc(e') ,\] 
where $e'$ is the last edge in $\Int(\calg_{56}')\cup{}_{SW}\calg_{56}' $ such that $f_{56}(e')=f_{56}(e_{t})$

\item  if $s=1 $ and $t'= d$ (see Figure \ref{fig(s'>t)2})
\[\calg_{56}= 
\calg_{56}'\setminus \pred(e)\setminus \suc(e')
,\]
where $e$ is the first edge in $\Int(\calg_{56}')\cup\calg_{56}'^{N\!E} $ such that $f_{56}(e)=f_{56}(e_{s'-1})$
and $e'$ is the last edge in $\Int(\calg_{56}')\cup{}_{SW}\calg_{56}' $ such that $f_{56}(e')=f_{56}(e_{t})$, if this set is non-empty. Otherwise, let $\calg_{56}=0$.
\end{enumerate}

\item If $s'=t+1$ (see Figure \ref{fig(s'=t+1)}) then 
  \begin{enumerate}
  \item if $s=1$ and $d\ne t'$ then 
%
  \[\calg_{56}= -\calg_1\setminus\pred(e),\, \, \, \parbox[t]{.55\textwidth}{ where $e$ is the first edge in $\Int(\calg_1[t'+1,d])\cup\calgNE_1$ such that $f(e)=-f(e_{t'})$; }
  \] 
  
  \item  if $s=1$ and $d= t'$ then 
    \[\calg_{56} = -\{e_t\};\]
    
  \item if $s\ne1$ and $d\ne t'$ (see Figure \ref{fig(s'=t+1)}) then we need to consider the local overlap in opposite direction $\calg_1[k+1,s-1]$  and $\calg_1[t'+1,t'+s-k-1] $ consisting of tiles preceding  $G_{s}$ and tiles succeeding $G_{t'}$, where  $k$ is given by the maximality condition for overlaps. Thus
$k$  is the least integer such that $k\ge0$, $d\ge t'+s-k-1$,  and $\ocalg_1[{ s-1,k+1}]\cong\calg_1[t'+1,t'+s-k-1].$ Thus there is a snake graph $\calh$ and embeddings $j_1(\calh)= \ocalg_1[{ s-1,k+1}]$ and 
$j_2(\calh)= \calg_1[t'+1,t'+s-k-1]$.
  In the examples in Figure~\ref{fig(s'=t+1)}, we have
$k=1$, $\calh$ consists of a single tile, and $j_1(\calh)$, $j_2(\calh)$ are the two unlabeled tiles in $\calg_1$.
The four cases in the following definition reflect whether $j_1(\calh)$ contains the first tile and $j_2(\calh)$ contains the last tile of $\cala_1$.

Then let
 \[\calg_{56} = \pm \left\{\begin{array}{ll} 
 \calg_1[1,k] \cup \calg_1[t'+s-k,d]
&\parbox[t]{.55\textwidth}{if $k>0$ and $k>s+t'-d-1$, where the two graphs are glued along the unique boundary edge of $G_k$ which is north or east and the unique boundary edge of $G_{t'+s-k}$ which is south or west;}\\ \\
\calg_1\setminus\pred(e') &\parbox[t]{.55\textwidth}{ if $k=0$ and $k>s+t'-d-1$, where $e'$ is the first edge in $ \Int(\calg_1[s+t',d])\cup\calg_1^{NE}$ such that $f(e')=f(e_{s+t'-1})$;
}
\\ \\
 \calg_1\setminus\suc(e)
&\parbox[t]{.55\textwidth}{if $k>0$ and $k=s+t'-d-1$, where $e$ is the last edge in $\Int(\calg_1[1,s+t'-d-1])\cup{}_{SW}\calg_1$ such that $f(e)=f(e_{s+t'-d-1})$;}\\ \\
0&\parbox[t]{\textwidth}{if $k=0=s+t'-d-1$.}
\end{array}\right.\]
where the sign is negative if and only if the local overlap $j_1(\calh)$ and $j_2(\calh)$ is crossing. In the first example in Figure \ref{fig(s'=t+1)} the local overlap $j_1(\calh) $ and $j_2(\calh)$ is crossing, and in the second example it is non-crossing.
  \item if $s\ne1$ and $d= t'$ then 
 \[\calg_{56} ={{-\calg_1\setminus\pred(e) \, \, \, \parbox[t]{.80\textwidth}{ where $e$ is the last edge in $\Int(\calg_1[1,s-1])\cup {}_{SW}\calg_1$ such that $f(e)=-f(e_{s-1}).$}}}  
%
\]

  \end{enumerate}

\end{enumerate}

\begin{defn} \label{resolution self-crossing}
 In the above situation, we say that the element $(\calg_3 \sqcup \calg_4^\circ ) + \calg_{56}\in \mathcal{R}$
   is the {\em resolution of the self-crossing} of $\calg_1$  at the overlap $\calg$ and we denote it by $\res_{\calg} (\calg_1).$ 
\end{defn}

\begin{figure}
\begin{center}
\scalebox{0.8}{ 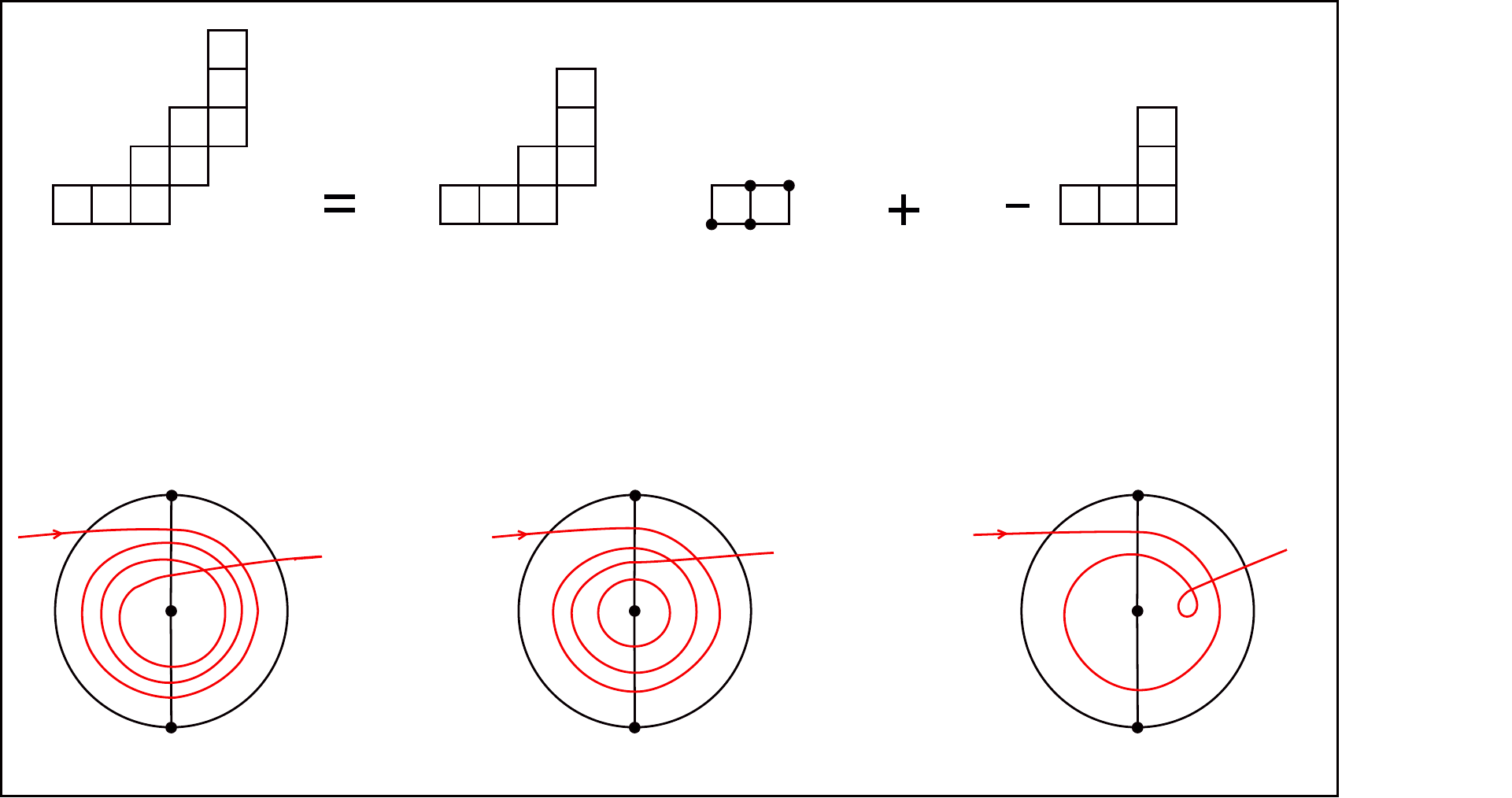} 
\caption{Example of resolution of selfcrossing when $s'\le t$ together with geometric realization on the punctured disk.}
\label{fig(s'<t)}
\end{center}
\end{figure}

\begin{figure}
\begin{center}
\scalebox{0.8}{ 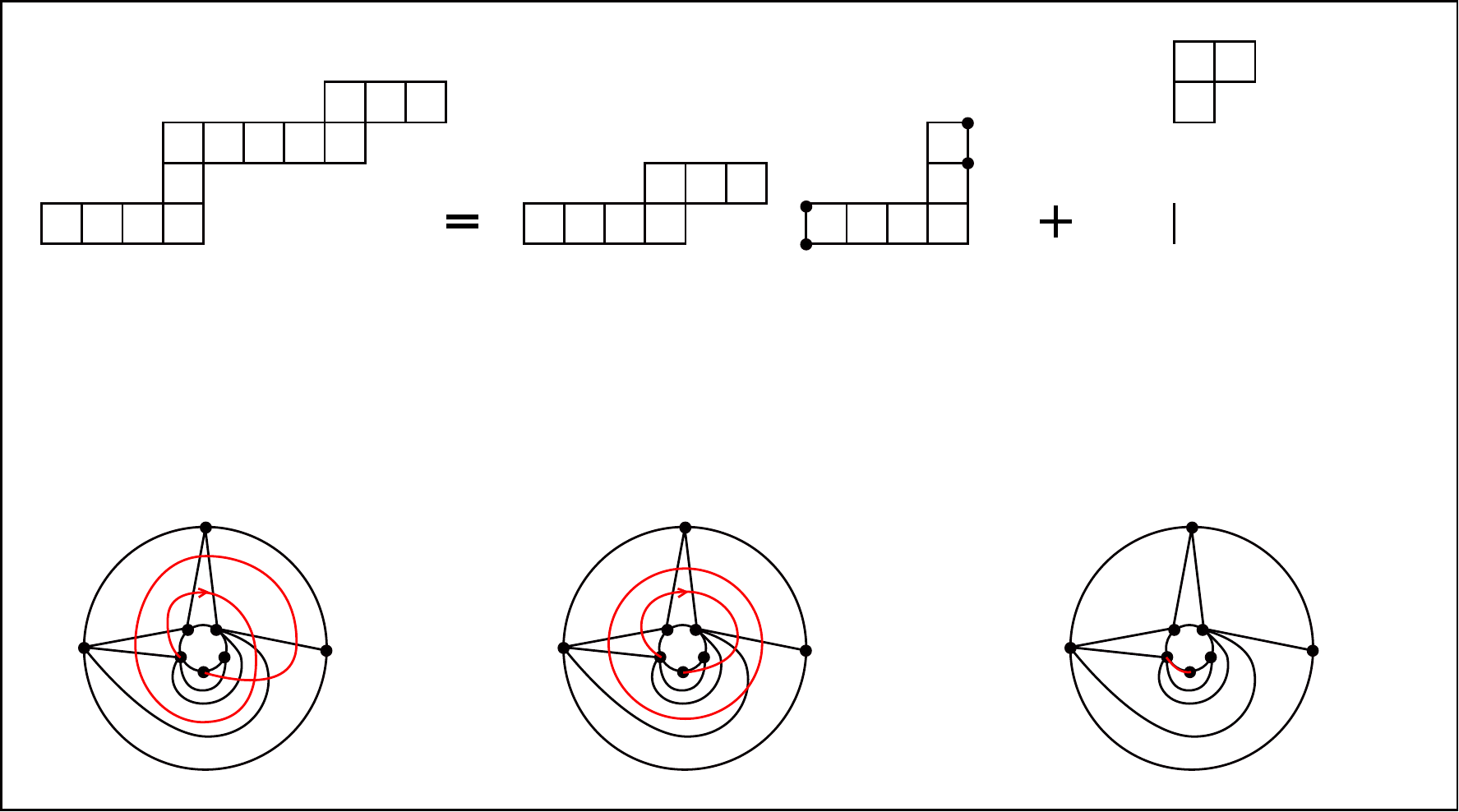}
\caption{Example of resolution of self-crossing when $s'>t+1$ and $s=1$ together with geometric realization on the annulus. Here the snake graph $\calg_{56}$ is   a single edge and the corresponding arc in the surface is a boundary segment.}
\label{fig(s'>t)}
\end{center}
\end{figure}

\begin{figure}
\begin{center}
\scalebox{0.8}{ 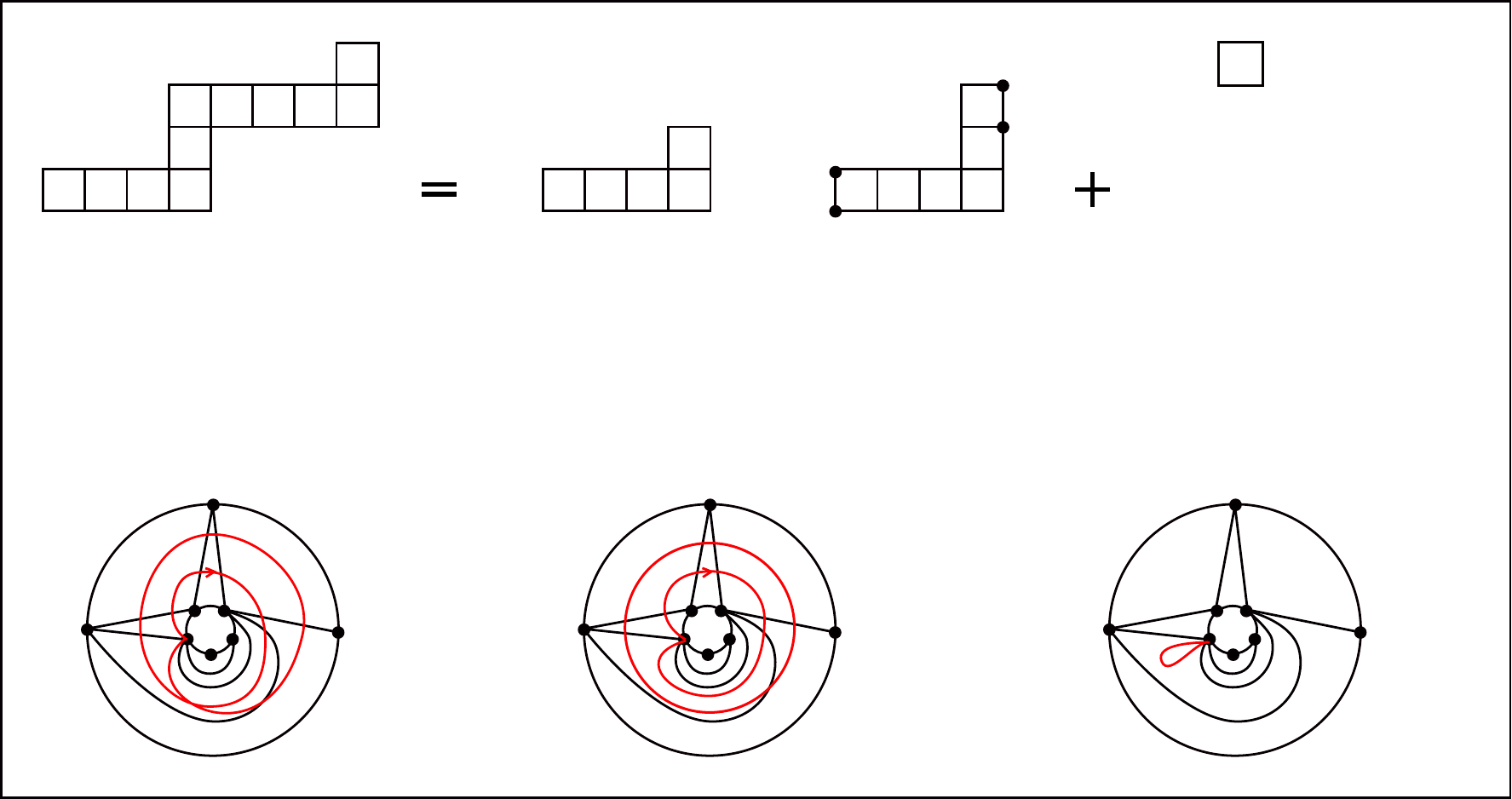}
\caption{Example of resolution of selfcrossing when $s'>t+1$, $s=1$ and $t'=d$, together with geometric realization on the annulus. Here  $\calg_{56}$ is the empty set and the corresponding arc in the surface is a contractible loop at a boundary point.}
\label{fig(s'>t)2}
\end{center}
\end{figure}
%

\begin{figure}
\begin{center}
\scalebox{0.8}{ 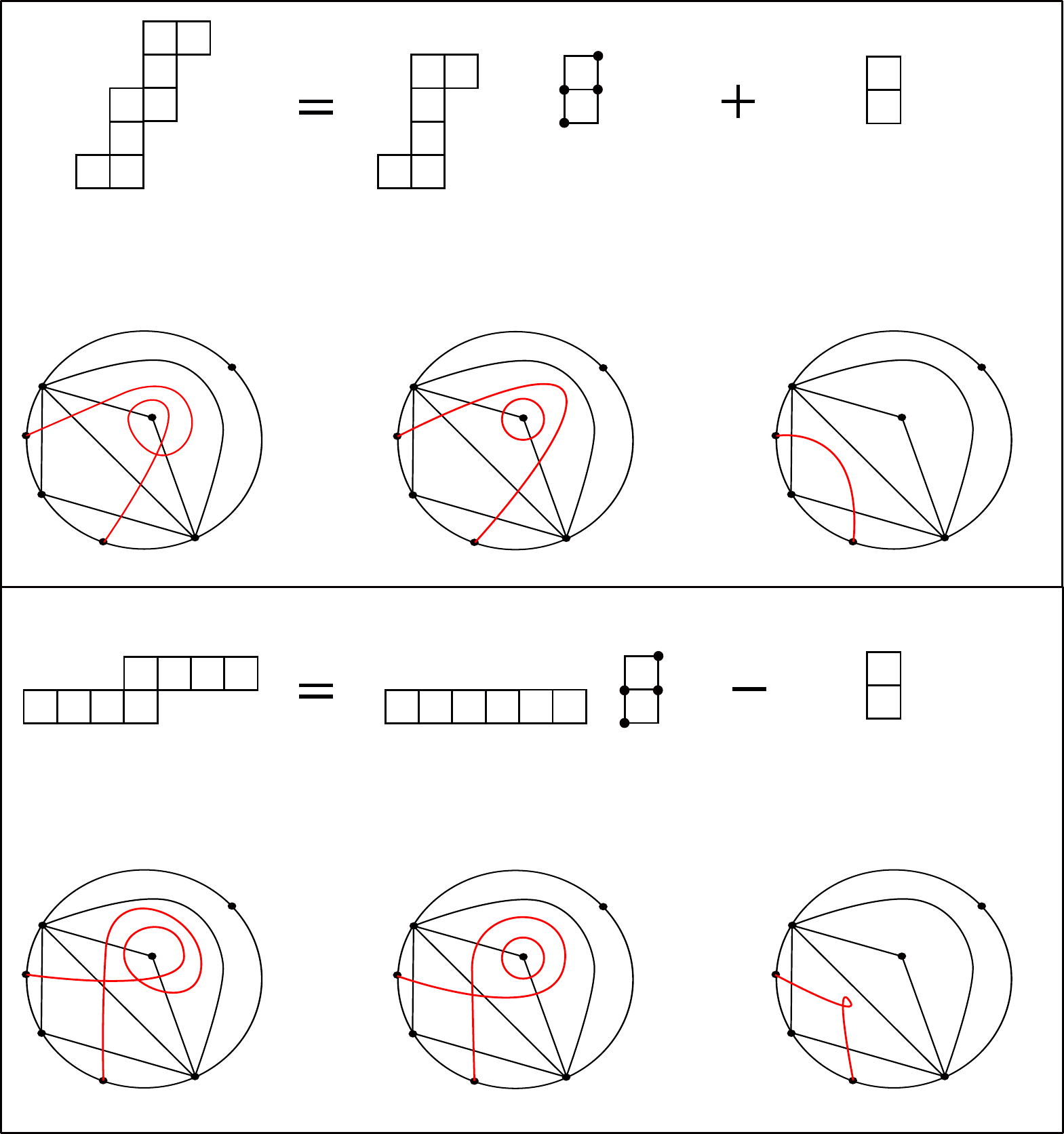} 
\caption{Examples of selfcrossing when $s'=t+1$. }
\label{fig(s'=t+1)}
\end{center}
\end{figure}

%
%
%
\bigskip 

\pagebreak

\emph{Case 2. Overlap in the opposite direction.} 
As before, 
let $\calg_1$ be a self-crossing snake graph with self-overlap
$i_1(\calg)=\calg_1[s,t]\cong \calg_1[s',t']=i_2(\calg).$ We suppose now that the overlap is in the opposite direction, thus the first tile of $\calg$ is mapped to the first tile of $\calg_1[s,t]$ {{under $i_1$}} and to the last tile of $\calg_1[s',t']$ {{under $i_2$}}.

We define the following  connected graphs, see Figure~\ref{opposite} for an example. In the definition of $\calg_5$ we shall use the notation
\[\calg_5'=\calg_1[1,s-1]\cup \calg_1[t'+1,d],\]
{where the two subgraphs are glued along the unique boundary edges of $G_{s-1}^{NE}$ and $_{SW}G_{t'+1}$.}

Let $f_5$ be a sign function on $\calg_{5}'$.
{With this notation, we} define

\[\begin{array}{rcl} \calg_{34}&=&\calg_1[1,t]\cup\ocalg_1[{ s'-1,t+1}]\cup\calg_1[s',d],\\
&&\parbox[t]{.85\textwidth}{{ where the first two subgraphs are glued along the unique boundary edge of $G_t^{NE}$ and the interior edge $e_{s'-1}$, and the last two subgraphs are glued along the interior edge $e_t$ and the unique boundary edge in $_{SW}G_{s'}$; }}
\\ \\
\calg_5 &=& \left\{
	\begin{array}{ll}
	\calg_5' & 
	\parbox[t]{.70\textwidth}{if  $s\ne 1, t'\ne d$; }\\
  \\
 \calg_5'\setminus\pred(e)& 
	\parbox[t]{.70\textwidth}{if  $s= 1, t'\ne d$, where $e\in \Int(\calg_5')\cup \calg_5'^{N\!E}$ is the first edge such that $f_{5}(e)=f_5(e_{t'})$;
	}\\ \\
\calg_5'\setminus\suc(e) & 
	\parbox[t]{.70\textwidth}{if  $s\ne 1, t'= d$, where $e \in\Int(\calg_5')\cup {}_{SE}\calg_5'$ is the last edge such that $f_{5}(e)=f_5(e_{s-1})$;
	}\\
\\
0& 
	\parbox[t]{.70\textwidth}{if  $s= 1, t'= d$.}\\

	\end{array}
\right.
\\ \\
\calg_6^\circ&=& \calg_1^b[t+1,s'-1] \ 
\parbox[t]{.70 \textwidth}{where $b$ is the unique boundary edge in {$_{SW}G_{t+1}$} and $b'$ is the unique boundary edge in {$G_{s'-1}^{NE}$}.}
\end{array}\]

\begin{defn} \label{resolution self-crossing in opposite direction}
 In the above situation, we say that the element $\calg_{34}  + (\calg_{5}\sqcup \calg_6^\circ )\in \mathcal{R}$
   is the {\em resolution of the self-crossing} of $\calg_1$  at the overlap $\calg$ and we denote it by $\res_{\calg} (\calg_1).$ 
\end{defn}

\begin{remark}
 \label{indices}{
 For the two possible directions of  overlap, our choice of notation for indices is consistent in the sense that indices $3,4$ always refer to the part of the resolution that contains the overlaps, and the indices 5,6 always refer to the part that does not.}
\end{remark}

\begin{figure}
\begin{center}
\scalebox{0.8}{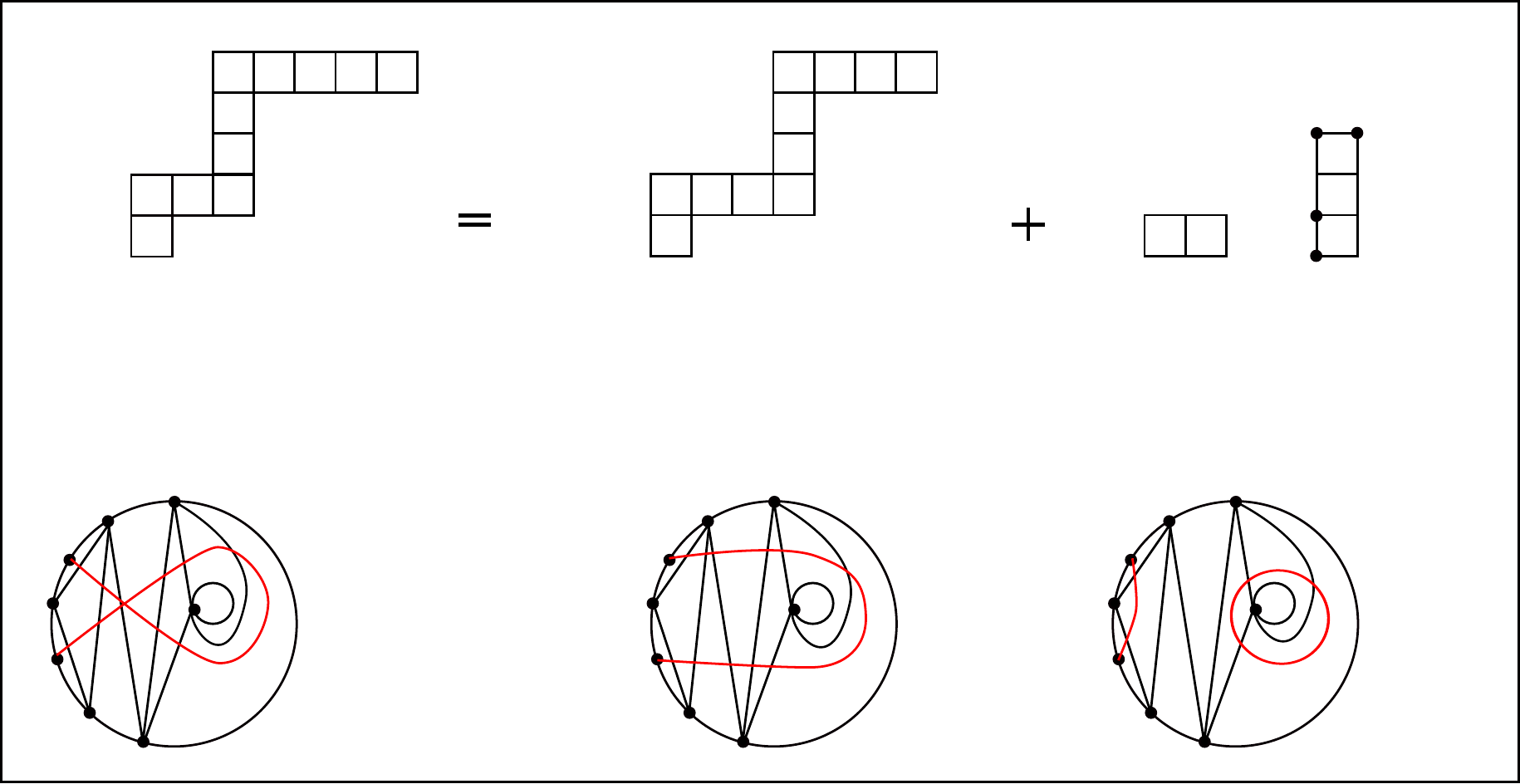}
\caption{Resolution of a self-crossing in opposite direction.}
\label{opposite}
\end{center}
\end{figure}

\begin{figure}
\begin{center}
\scalebox{0.8}{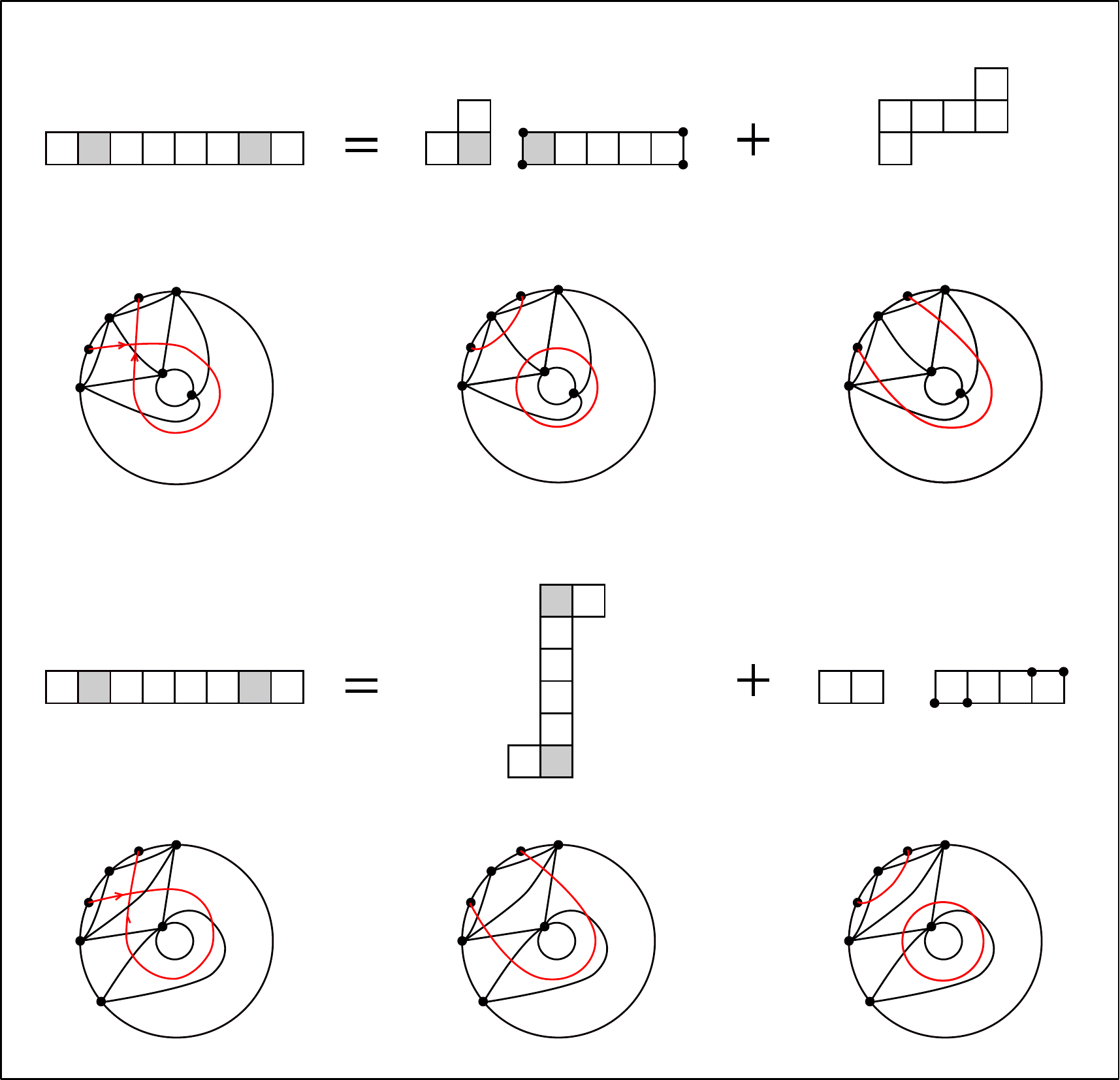}
\caption{Example of a snake graph with  self-crossing in a single tile which can be considered as in either direction.}
\label{opposite2}
\end{center}
\end{figure}


%
\subsection{Grafting} 
In this subsection, we define another operation which to two snake graphs associates two pairs of snake graphs. This operation does not involve the notion of overlaps.

Let $\calg_1=(\gi 12d),$ $\calg_2=(\gii 12{d'})$ be two snake graphs and let $f_1$ be a sign function on $\calg_1.$

\emph{Case 1.}
Let $s$ be such that $1\le s<d.$

If $G_{s+1}$ is north of $G_s$ in $\calg_1$ then let $\e_3$ denote the east edge of $G_s,$  $\e_5$ the west edge of $G_{s+1}$ and $\e'_3$ the west edge of $G'_1,$ $\e'_5$ the south edge of $G'_1.$

If $G_{s+1}$ is east of $G_s$ in $\calg_1,$ then let 
$\e_3 $  denote the north edge of $G_s $,
$\e_5$   the south edge of $G_{s+1}$ and
$\e_{3}'$   the south edge of  $ G'_1 $, 
$\e_{5}' $   the west edge of $ G'_1  $. 
Thus we have one of the following two situations.
\begin{center}
\scalebox{0.8}{
\begingroup%
  \makeatletter%
  \providecommand\color[2][]{%
    \errmessage{(Inkscape) Color is used for the text in Inkscape, but the package 'color.sty' is not loaded}%
    \renewcommand\color[2][]{}%
  }%
  \providecommand\transparent[1]{%
    \errmessage{(Inkscape) Transparency is used (non-zero) for the text in Inkscape, but the package 'transparent.sty' is not loaded}%
    \renewcommand\transparent[1]{}%
  }%
  \providecommand\rotatebox[2]{#2}%
  \ifx\svgwidth\undefined%
    \setlength{\unitlength}{312.66984863bp}%
    \ifx\svgscale\undefined%
      \relax%
    \else%
      \setlength{\unitlength}{\unitlength * \real{\svgscale}}%
    \fi%
  \else%
    \setlength{\unitlength}{\svgwidth}%
  \fi%
  \global\let\svgwidth\undefined%
  \global\let\svgscale\undefined%
  \makeatother%
  \begin{picture}(1,0.22787615)%
    \put(0,0){\includegraphics[width=\unitlength]{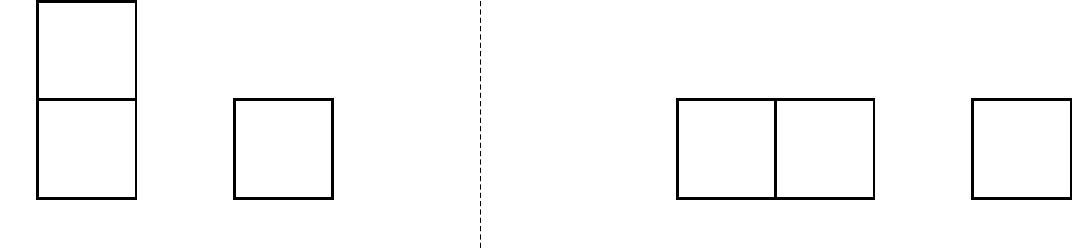}}%
    \put(0.92214271,0.0802294){\color[rgb]{0,0,0}\makebox(0,0)[lb]{\smash{$G_1'$}}}%
    \put(0.92579783,0.01434845){\color[rgb]{0,0,0}\makebox(0,0)[lb]{\smash{$\e_3'$}}}%
    \put(0.86247545,0.0802294){\color[rgb]{0,0,0}\makebox(0,0)[lb]{\smash{$\e_5'$}}}%
    \put(0.7268886,0.0802294){\color[rgb]{0,0,0}\makebox(0,0)[lb]{\smash{$G_{s+1}$}}}%
    \put(0.74740947,0.01458317){\color[rgb]{0,0,0}\makebox(0,0)[lb]{\smash{$\e_5$}}}%
    \put(0.64924712,0.0802294){\color[rgb]{0,0,0}\makebox(0,0)[lb]{\smash{$G_s$}}}%
    \put(0.65436434,0.14611037){\color[rgb]{0,0,0}\makebox(0,0)[lb]{\smash{$\e_3$}}}%
    \put(0.06168075,0.08105505){\color[rgb]{0,0,0}\makebox(0,0)[lb]{\smash{$G_s$}}}%
    \put(0.04885712,0.17160831){\color[rgb]{0,0,0}\makebox(0,0)[lb]{\smash{$G_{s+1}$}}}%
    \put(0.13167701,0.08105505){\color[rgb]{0,0,0}\makebox(0,0)[lb]{\smash{$\e_3$}}}%
    \put(-0.00006996,0.17084109){\color[rgb]{0,0,0}\makebox(0,0)[lb]{\smash{$\e_5$}}}%
    \put(0.2484092,0.01443661){\color[rgb]{0,0,0}\makebox(0,0)[lb]{\smash{$\e_5'$}}}%
    \put(0.18197352,0.08105505){\color[rgb]{0,0,0}\makebox(0,0)[lb]{\smash{$\e_3'$}}}%
    \put(0.24228363,0.08105505){\color[rgb]{0,0,0}\makebox(0,0)[lb]{\smash{$G_1'$}}}%
  \end{picture}%
\endgroup%
}
\end{center}
Define four snake graphs as follows; see Figure \ref{figrafting} for examples.
\begin{align*}
 \calg_3=&\,\calg_1[1,s] \cup \calg_2 , \mbox{ where the  two subgraphs are glued along the edges $\e_3$ and $\e'_3$}; \\
 \calg_4 =&\,\calg_1 \setminus \pred(e),\   \parbox[t]{.65\textwidth}{where $e\in\Int(\calg_1[s\!+\!1,d])\cup\calgNE_1$  is the first edge such that $f_1(e)=f_1(\delta_3)$;}
\\
 \calg_5 =&\,\calg_1 \setminus \suc(e) ,\  \parbox[t]{.75\textwidth}{where $e\in\calgSW_1\cup\,\Int(\calg_1[1,s])$  is the last edge such that $f_1(e)=f_1(\delta_3)$;}
\\
 \calg_6=&\,\ocalg_2[d',1] \cup \calg_1 [s+1,d] ,\ \mbox{where the two subgraphs are glued along the edges $\e_5$ and $\e'_5$.} &
 \\
\end{align*}

\emph{Case 2.} Now let $s=d.$ Choose a pair of edges $(\e_3,\e'_3)$ such that either $\e_3$ is the north edge in $G_s$ and $\e'_3$ is the south edge in $G'_1$ or $\e_3$ is the east edge in $G_s$ and $\e'_3$ is the west edge in $G'_1.$ Let $f_2$ be  a sign function on $ \calg_2$ such that $f_2(\e'_3)=f_1(\e_3).$ 
 Then define four snake graphs as follows.

\begin{align*}
 \calg_3=&\calg_1[1,s] \cup \calg_2 \mbox{ where the  two subgraphs are glued along the edges $\e_3$ and $\e'_3$.} &\\
\calg_4=& \{ \e_3 \}
\\
 \calg_5 =&\,\calg_1 \setminus \suc(e) ,\  \parbox[t]{.75\textwidth}{where $e\in\calgSW_1\cup\,\Int(\calg_1[1,s])$  is the last edge such that $f_1(e)=f_1(\delta_3)$;}
\\
 \calg_6=&\,\calg_2\setminus \pred(e),\ \mbox{where $e\in \Int(\calg_2)\cup \calgNE_2$ is the first edge such that $f_2(e)=f_2(\delta_3')$}&
\end{align*}

\begin{defn} \label {grafting}
 In the above situation, we say that the element $(\calg_3 \sqcup \calg_4) + (\calg_5 \sqcup \calg_6) \in\mathcal{R}$ is the {\em resolution of the grafting of $\calg_2$ on $\calg_1$ in $G_s$ }and we denote it by $\graft_{s,\e_3} (\calg_1,\calg_2).$ 
\end{defn}

\begin{figure}
\begin{center}
\scalebox{0.8}{ 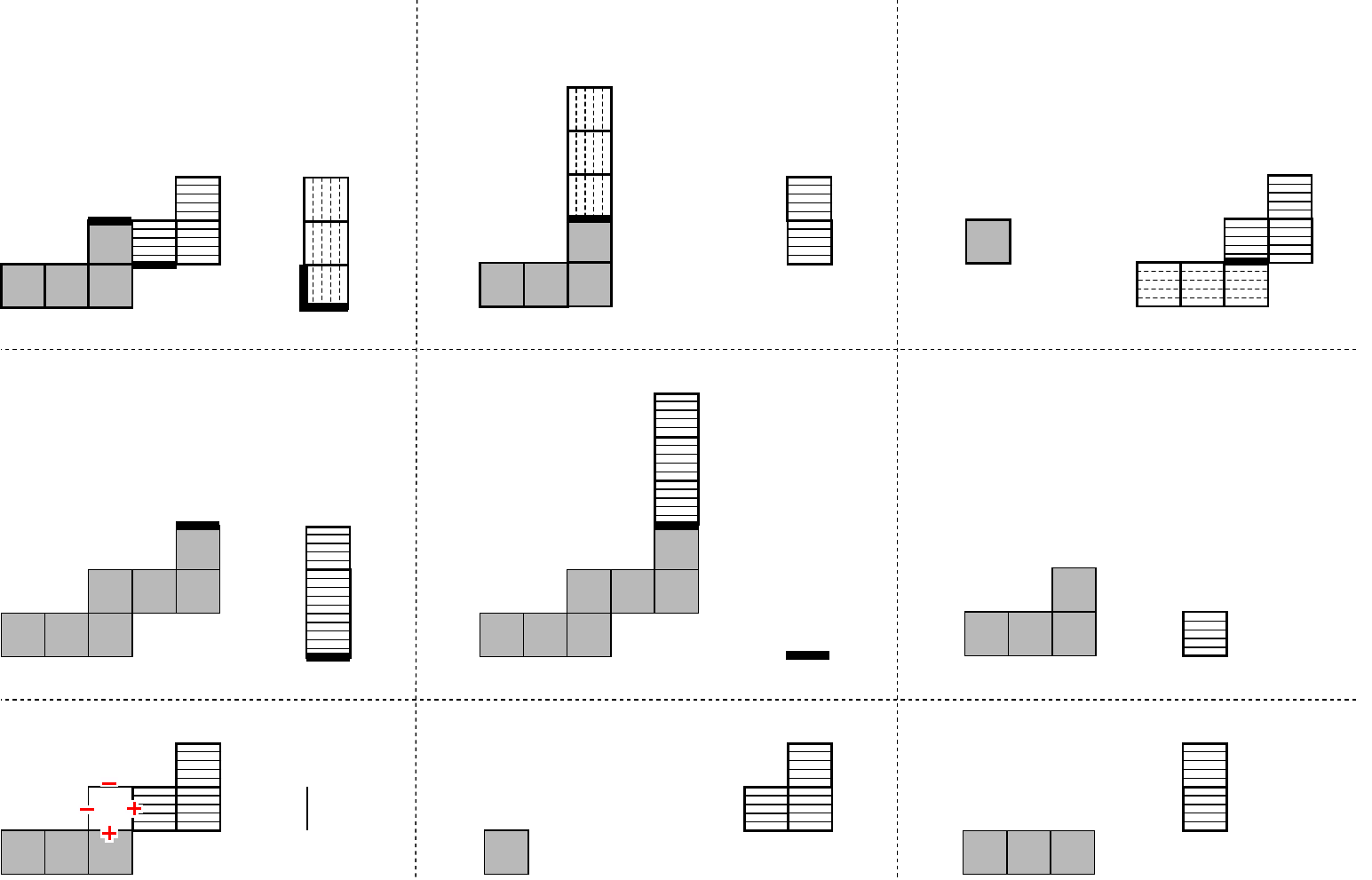}
\caption{Examples of resolutions of graftings: $1<s<d$ in the first row;  $s=d'$ in the second row {and grafting with a single edge in the third row.}}
\label{figrafting}
\end{center}
\end{figure}

\emph{Case 3.} Grafting with a single edge. In this case let $\calg_1=(G_1,\ldots,G_d)$ and let the snake graph $\calg_2$ consist of a single edge. We are grafting this edge at a position $s$, where $1\le s\le d$, as follows.   We define 

\begin{align*}
 \calg_3=&\,\calg_1\setminus\suc(e), \textup{where $e\in\Int(\calg_1[1,s])\cup\calgSW_1$ is the last edge such that $f(e)=-$}; \\
 \calg_4 =&\,\calg_1 \setminus \pred(e),\   \textup{where $e\in\Int(\calg_1[s,d])\cup\calgNE_1$  is the first edge such that $f(e)=+$;}
\\
 \calg_5=&\,\calg_1\setminus\suc(e), \textup{where $e\in\Int(\calg_1[1,s])\cup\calgSW_1$ is the last edge such that $f(e)=+$}; \\
 \calg_6 =&\,\calg_1 \setminus \pred(e),\   \textup{where $e\in\Int(\calg_1[s,d])\cup\calgNE_1$  is the first edge such that $f(e)=-$.}
 \\
\end{align*}
In particular, if $\calg_1$ consists of a single tile, then $s=d=1$ and one of $\calg_3 \cup \calg_4 $, $\calg_5 \cup \calg_6 $ is the east and the west edge of $\calg_1$ and the other is the north and the south edge of $\calg_1$.

\subsection{Self-grafting}\label{self-grafting} In this subsection, we define the grafting operation on a single snake graph. 
Let $\calg_1=(\gi 12d),$ be a snake graph and let $f$ be a sign function on $\calg_1.$ We will define the resolution of self-grafting on $\calg_1$ in $G_s$.

\emph{Case 1.}
Let $s$ be such that $1\le s<d$. 
If $G_{s+1}$ is north of $G_s$ in $\calg_1$ then let $\e_3$ denote the east edge of $G_s,$  $\e_5$ the west edge of $G_{s+1}$ and if $G_{s+1}$ is east of $G_s$ in $\calg_1,$ then let 
$\e_3 $  denote the north edge of $G_s $,
$\e_5$   the south edge of $G_{s+1}$.
In either case we have $f(\e_3)=f(\e_5)$.
 Let $\e'_3,\e'_5 \in\calgSW_1$ be such that  $f(\e'_3)=f(\e_3)=f(\e_5)$ and $f(\e'_5)=-f(\e_3)=-f(\e_5)$.

Thus we have one of the following two situations.
\begin{center}
\scalebox{0.8}{
\begingroup%
  \makeatletter%
  \providecommand\color[2][]{%
    \errmessage{(Inkscape) Color is used for the text in Inkscape, but the package 'color.sty' is not loaded}%
    \renewcommand\color[2][]{}%
  }%
  \providecommand\transparent[1]{%
    \errmessage{(Inkscape) Transparency is used (non-zero) for the text in Inkscape, but the package 'transparent.sty' is not loaded}%
    \renewcommand\transparent[1]{}%
  }%
  \providecommand\rotatebox[2]{#2}%
  \ifx\svgwidth\undefined%
    \setlength{\unitlength}{310.87563324bp}%
    \ifx\svgscale\undefined%
      \relax%
    \else%
      \setlength{\unitlength}{\unitlength * \real{\svgscale}}%
    \fi%
  \else%
    \setlength{\unitlength}{\svgwidth}%
  \fi%
  \global\let\svgwidth\undefined%
  \global\let\svgscale\undefined%
  \makeatother%
  \begin{picture}(1,0.36562104)%
    \put(0,0){\includegraphics[width=\unitlength]{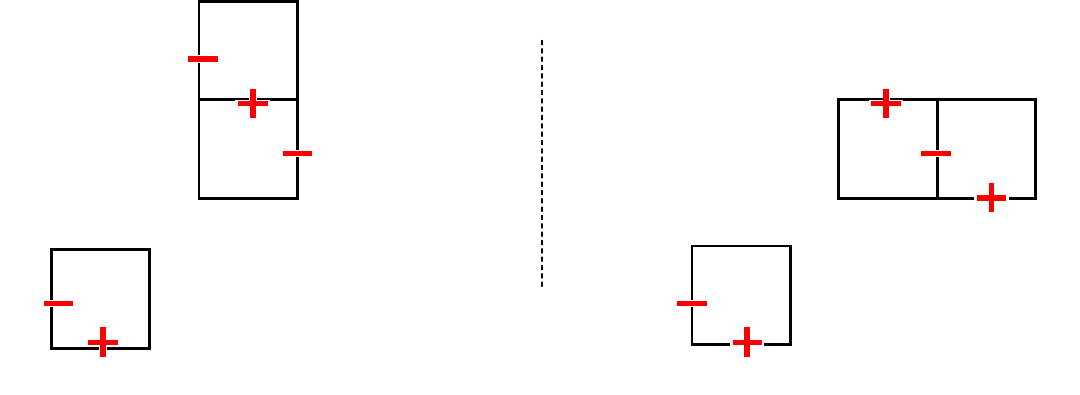}}%
    \put(0.66758406,0.08170372){\color[rgb]{0,0,0}\makebox(0,0)[lb]{\smash{$G_1$}}}%
    \put(0.67640703,0.01029578){\color[rgb]{0,0,0}\makebox(0,0)[lb]{\smash{$\e_3'$}}}%
    \put(0.58698547,0.08170372){\color[rgb]{0,0,0}\makebox(0,0)[lb]{\smash{$\e_5'$}}}%
    \put(0.88068732,0.21712215){\color[rgb]{0,0,0}\makebox(0,0)[lb]{\smash{$G_{s+1}$}}}%
    \put(0.90132663,0.14595029){\color[rgb]{0,0,0}\makebox(0,0)[lb]{\smash{$\e_5$}}}%
    \put(0.80259774,0.21712215){\color[rgb]{0,0,0}\makebox(0,0)[lb]{\smash{$G_s$}}}%
    \put(0.80774449,0.2885301){\color[rgb]{0,0,0}\makebox(0,0)[lb]{\smash{$\e_3$}}}%
    \put(0.21164023,0.21795256){\color[rgb]{0,0,0}\makebox(0,0)[lb]{\smash{$G_s$}}}%
    \put(0.19874259,0.3141752){\color[rgb]{0,0,0}\makebox(0,0)[lb]{\smash{$G_{s+1}$}}}%
    \put(0.29233397,0.21795256){\color[rgb]{0,0,0}\makebox(0,0)[lb]{\smash{$\e_3$}}}%
    \put(0.13923962,0.3082568){\color[rgb]{0,0,0}\makebox(0,0)[lb]{\smash{$\e_5$}}}%
    \put(0.08034772,0.00684056){\color[rgb]{0,0,0}\makebox(0,0)[lb]{\smash{$\e_5'$}}}%
    \put(-0.00007037,0.07648542){\color[rgb]{0,0,0}\makebox(0,0)[lb]{\smash{$\e_3'$}}}%
    \put(0.07418681,0.07899024){\color[rgb]{0,0,0}\makebox(0,0)[lb]{\smash{$G_1$}}}%
  \end{picture}%
\endgroup%
}
\end{center}

Define two snake graphs and a band graph as follows, {see Figure~\ref{selfgrafting} for an example}. 
\begin{align*}
 \calg_3=&\,\calg_1\setminus \pred(e),   \parbox[t]{\textwidth} { where $e\in\Int(\calg_1[s\!+\!1,d])\cup\calgNE_1$  is the first edge such that $f(e)=f(\delta_3)$;} \\
 \calg_4 =&\,(\calg_1[1,s])^{\e_3},  \parbox[t]{.65\textwidth}{ the band graph obtained by identifying $\e_3$ and $\e'_3$;}
\\
\end{align*}
Let $\calg_{56}'=\ocalg_1[{ s},1]\cup\calg_1[s+1,d]$ glued along $\e_5$ and $\e'_5$, and let $f_{56}$ be a sign function on $\calg_{56}'$. Then define
\begin{align*}
 \calg_{56} =&\,\calg_{56}' \setminus \pred(e) ,\  \parbox[t]{.75\textwidth}{where $e\in\,\Int(\calg_{56}')\cup{\calg'_{56}}^{N\!E}$  is the first edge such that $f_{56}(e)=f_{56}(\delta_3)$.}
\\
\end{align*}

\begin{figure}
\begin{center}
\scalebox{0.7}{\Large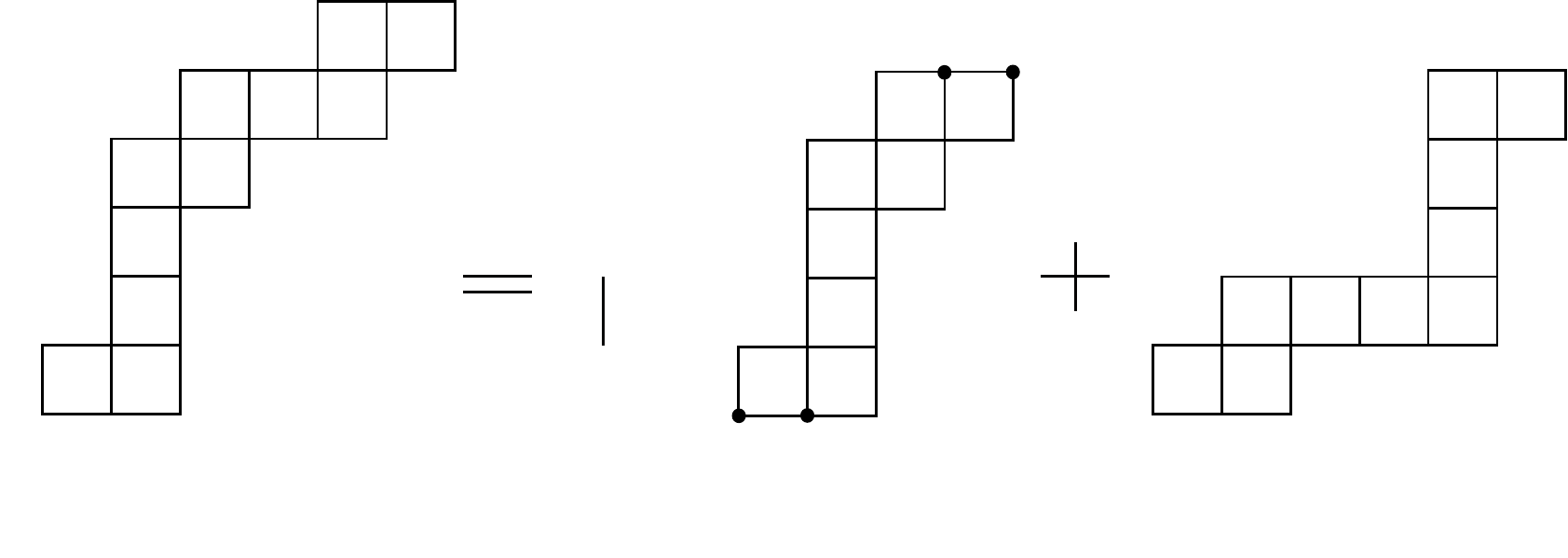}
\caption{ An example of self-grafting with $s<d$.}
\label{selfgrafting}
\end{center}
\end{figure}

 \emph{Case 2.}  Now let $s=d$ (see Figure \ref{kroneckerBand}). Choose an edge $\e_3\in \calgNE_1$, and let $\e'_3\in\calgSW_1$ be the unique edge such that $f(\e_3)=f(\e_3')$.
 Then define 
\[ \begin{array}{rcllllll}
 \calg_3&=&\,\{\e_3\} ; \\
\calg_4^\circ&=&\,\calg_1^{\e_3} \quad \parbox[t]{.65\textwidth}{ the band graph obtained by identifying $\e_3$ and $\e'_3$;}
\\
\\
\calg_{56}&=&\left\{\begin{array}{ll}\calg_1\setminus\suc(e)\setminus\pred(e') & \parbox[t]{.60\textwidth}{where $e$ is the last edge in $\Int(\calg_1)$ such that $f(e)=f(\e_3)$ and $e'$ is the first edge  in $\Int(\calg_1\setminus\suc(e))$  such that $f(e')=f(\e_3)$, if such an $e'$ exists;} \\
  0 & \mbox {otherwise}.
  \end{array}\right.
\end{array}
\]

  {\begin{remark}
 This last case is particularly important in actual computations, since it computes the difference between a  band graph and any of its underlying snake graphs.
  \end{remark}}


\begin{defn} \label{resolution self-grafting}
 In the above situation, we say that the element $(\calg_3 \sqcup \calg_4^\circ ) + \calg_{56}\in \mathcal{R}$
   is the {\em resolution of the self-grafting} of $\calg_1$  in $G_s$  and we denote it by $\graft_{s,\e_3} (\calg_1).$ 
\end{defn}

\begin{figure}
\begin{center}
\scalebox{1}{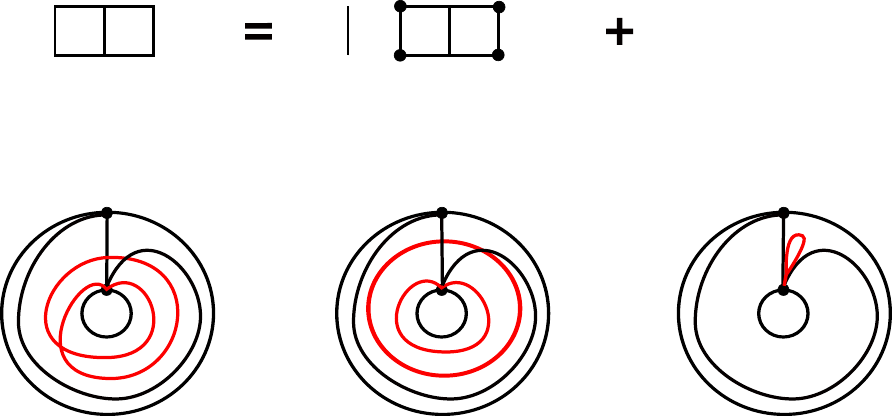}
\caption{An example of self-grafting with $s=d$ and $\calg_{56}=0$.}
\label{kroneckerBand}
\end{center}
\end{figure}

\section{Perfect Matchings}\label{sect pm}\label{sect 3}
{In this section, we show that there is a bijection between the set of perfect matchings of (self-)crossing snake graphs and the set of perfect matchings of the resolution. In sections~\ref{sect 5} and \ref{sect 7}, we will show that for labeled snake graphs coming from an unpunctured surface, this bijection is weight preserving and induces an identity in the corresponding cluster algebra.
}

Recall that a {\em perfect matching} $ P$ of a graph $G$ is a subset of the set of edges of $G$ such that each vertex of $G$ is incident to exactly one edge in $ P.$  

For band graphs, we need the notion of \emph{good} perfect matchings, which was introduced in \cite[Definition 3.8]{MSW2}. 
Let $\calg$ be a snake graph and let  $\calg^b$ be the band graph obtained from $\calg $ by glueing along the edge $b$ as defined in section~\ref{sect band}. If $P$ is a perfect matching of $\calg$ containing the edge $b$,  then $P\setminus\{b\}$ is a perfect matching of $\calg^b$.
In the following definition of good perfect matchings for arbitrary band graphs, we start with the band graph and cut it at an interior edge.
\begin{defn}
 Let  $\calg^\circ$ be a  band graph. A perfect matching $P$ of $\calg^\circ$ is called a \emph{good perfect matching} if there exists an interior edge $e$ in $\calg^\circ$ such that $P\sqcup\{e\}$ is a perfect matching of the snake graph  
$(\calg^\circ)_e$ obtained by cutting $\calg^\circ$ along $e$.

\end{defn}

\begin{defn}\ 
\begin{enumerate}
\item If $\calg$ is a snake graph,  let $\match \calg$ denote the set of all perfect matchings of  $\calg$.
\item If $\calg^\circ$ is  a band graph,  let $\match \calg^\circ$ denote the set of all good perfect matchings of  $\calg^\circ$. 
\item If $R=(\calg_1\sqcup\calg_2 + \calg_3\sqcup\calg_4)\in \mathcal{R}$, we let \[\match R=\match \calg_1\times\match\calg_2 \cup \match\calg_3\times\match \calg_4.\]
\end{enumerate}
\end{defn}

{The following Lemma will be useful later on.
\begin{lem}
 \label{lemNE}
 Let $\calg$ be a snake graph with sign function $f$ and let $P$ be a matching of $ \calg$ which consists of boundary edges only. Let NE be the set of all north and all east edges of the boundary of $\calg$ and let SW be the set of all south and west edges of the boundary. Then
 \begin{enumerate}
\item $f(a)=f(b)$ if $a$ and $b$ are both in $P \,\cap\,$NE or both in $P\,\cap\,$SW.

\item $f(a)=-f(b)$ if one of $a,b$ is in  $P\,\cap\,$NE and the other  in $P\,\cap\,$SW.
\item If $a\in P\,\cap\,$NE, or if  $a\in $SW but $a\notin P$, then 
\[P= \{b\in \textup{NE}\mid f(b)=f(a)\}\cup \{b\in \textup{SW}\mid f(b)=-f(a)\}.\]

\item If $a\in P\,\cap\,$SW, or if  $a\in $NE but $a\notin P$, then 
\[P= \{b\in \textup{NE}\mid f(b)=-f(a)\}\cup \{b\in \textup{SW}\mid f(b)=f(a)\}.\]
\end{enumerate}
\end{lem}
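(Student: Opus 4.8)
The plan is to show that a perfect matching $P$ consisting of boundary edges is completely rigid: it is forced to consist of all north/east edges of one sign together with all south/west edges of the opposite sign. Concretely, for a sign $\varepsilon\in\{+,-\}$ set
\[
 M_\varepsilon=\{b\in \mathrm{NE}\mid f(b)=\varepsilon\}\cup\{b\in \mathrm{SW}\mid f(b)=-\varepsilon\}.
\]
All four assertions follow at once from the single claim that \emph{every} boundary-edge matching of $\calg$ equals $M_+$ or $M_-$. Indeed, parts (1) and (2) are immediate from the definition of $M_\varepsilon$, and for parts (3) and (4) one only has to note that the defining sign of $M_\varepsilon$ is read off from any of its edges: if $a\in P\cap\mathrm{NE}$ then $\varepsilon=f(a)$, while if $a\in\mathrm{SW}\setminus P$ then $a$ fails the SW-condition, forcing $f(a)=\varepsilon$ again; the symmetric statements give part (4).

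First I would record the behaviour of $f$ along the tiles. Writing $N_i,S_i,E_i,W_i$ for the north, south, east and west edges of the $i$-th tile $G_i$ and $\varepsilon_i=f(N_i)=f(W_i)$, the sign axioms give $f(S_i)=f(E_i)=-\varepsilon_i$, and comparing signs across each glueing edge $e_i$ (the north/south edge if $G_{i+1}$ is north of $G_i$, the east/west edge if it is east) yields $\varepsilon_{i+1}=-\varepsilon_i$; thus the tile signs alternate. Consequently, on each tile $M_\varepsilon$ selects exactly the two horizontal edges $\{N_i,S_i\}$ when $\varepsilon_i=\varepsilon$ and exactly the two vertical edges $\{E_i,W_i\}$ when $\varepsilon_i=-\varepsilon$, in each case after deleting whichever of these happens to be an interior (shared) edge.

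The core is an induction on the number of tiles $d$, proving the combined statement that the boundary-edge matchings of $\calg$ are exactly $M_+$ and $M_-$. The base case $d=1$ is the square, whose two boundary matchings are $\{N,S\}$ and $\{E,W\}$, matching $M_+$ and $M_-$. For the inductive step, let $\calg'=\calg[1,d-1]$; the diagonal reflection across the line $y=x$ interchanges north with east and south with west, preserves snake graphs, sign functions, $\mathrm{NE}$ and $\mathrm{SW}$, so I may assume $G_d$ is glued east of $G_{d-1}$. Passing from $\calg'$ to $\calg$, the edge $E_{d-1}$ ceases to be a boundary edge while $N_d,S_d,E_d$ become boundary edges; using the sign values above one checks directly that $M_\varepsilon(\calg)=\big(M_\varepsilon(\calg')\setminus\{E_{d-1}\}\big)\cup\{N_d,S_d\}$ when $\varepsilon=-\varepsilon_{d-1}$, and $M_\varepsilon(\calg)=M_\varepsilon(\calg')\cup\{E_d\}$ when $\varepsilon=\varepsilon_{d-1}$. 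Now given a boundary matching $P$ of $\calg$, the new vertices $NE(G_d)$ and $SE(G_d)$ can be covered only by $N_d,S_d,E_d$; either $E_d\in P$, in which case $P\setminus\{E_d\}$ restricts to a boundary matching of $\calg'$, or $E_d\notin P$, forcing $N_d,S_d\in P$, in which case $\big(P\setminus\{N_d,S_d\}\big)\cup\{E_{d-1}\}$ is a boundary matching of $\calg'$. Applying the induction hypothesis to $\calg'$ and reinstalling the deleted edges via the two displayed identities identifies $P$ with the correct $M_\varepsilon$; the sign $\varepsilon$ is pinned down because $E_{d-1}$, being interior in $\calg$, cannot lie in $P$, which rules out one of the two values.

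I expect the main obstacle to be the bookkeeping in the inductive step rather than any conceptual difficulty: one must verify that removing the last tile turns no boundary edge of $\calg$ into an interior edge of $\calg'$ (so that $P$ really does restrict to a \emph{boundary}-edge matching of $\calg'$), and that the shared vertices $NW(G_d)=NE(G_{d-1})$ and $SW(G_d)=SE(G_{d-1})$ are re-covered exactly once when the deleted edges are swapped back in. Tracking these shared vertices, together with the boundary-versus-interior status of $E_{d-1}$ in each of the two matching options, is the delicate point; everything else is a direct consequence of the alternation of the tile signs.
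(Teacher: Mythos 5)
Your proof is correct, but it takes a genuinely different route from the paper's. The paper disposes of parts (1) and (2) by citing Lemma 7.2 of [CS], and proves (3) and (4) by a local argument: the statements are clear for straight snake graphs, and at any zigzag triple of consecutive tiles the four relevant boundary edges (north of $G_{i-1}$, south and east of $G_i$, west of $G_{i+1}$) all carry the same sign, so a boundary-edge matching has only two possible local configurations there; this propagates the sign dichotomy across the whole graph. You instead prove a single global classification --- every perfect matching consisting of boundary edges equals $M_+$ or $M_-$, where $M_\varepsilon$ consists of the $\textup{NE}$-edges of sign $\varepsilon$ and the $\textup{SW}$-edges of sign $-\varepsilon$ --- by induction on the number of tiles, using the alternation of the tile signs and the diagonal reflection to normalize the last glueing; all four parts then fall out at once. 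Your induction is sound: the two displayed identities relating $M_\varepsilon(\calg)$ to $M_\varepsilon(\calg')$ are correct, the case split on whether $E_d\in P$ is exhaustive, and the interior edge $E_{d-1}$ does pin down the sign in each branch. What your approach buys is self-containedness (no appeal to [CS]) plus, as a byproduct, the standard fact that a snake graph has exactly two boundary-edge perfect matchings (the minimal and maximal matchings of section 4); what the paper's argument buys is brevity, since it reuses the earlier lemma and only needs the straight/zigzag local analysis. One small point of reading: the lemma says ``matching,'' but both your argument and the statements (3), (4) require $P$ to be a \emph{perfect} matching (as in [CS, Lemma 7.2]); you assumed this, and it is indeed the intended hypothesis.
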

\begin{proof} Statements 
 (1) and (2) are Lemma 7.2 in \cite{CS}. 
The statements (3) and (4) clearly hold for straight snake graphs. 
For general snake graphs it follows from the following observation. Suppose that  three consecutive tiles $G_{i-1},G_i,G_{i+1}$ form a zigzag, and suppose without loss of generality that $G_i$ is east of $G_{i-1}$ and $G_{i+1}$ is north of $G_i$. Then the north edge $n$ of $G_{i-1}$, the south edge $s$ and the east edge $e$ of  $G_i$, and the west edge $w$ of $G_{i+1}$ are boundary edges. Moreover all four edges have the same sign $f(n)=f(s)=f(e)=f(w)$. 
Since $P$ consists of boundary edges only there are two possibilities, either  $n,e\in P$  and $s,w\notin P$ or $n,e\notin P$  and $s,w\in P$. 
\end{proof}
}

\smallskip

In \cite{CS}, we constructed bijections between the set of perfect matchings of two crossing snake graphs and the set of perfect matchings of the resolution of the crossing. One of the main results of \cite{CS} is the following.

\begin{thm}\cite[Theorem 3.1]{CS} \label {bijections} Let $\calg_1, \calg_2$ be two snake graphs. Then there are bijections
\begin{itemize}
 \item[(1)] $\match (\calg_1 \sqcup \calg_2) \longrightarrow  \match( \res_{\calg} (\calg_1 \sqcup \calg_2))$;
 \item[(2)] $\match (\calg_1 \sqcup \calg_2) \longrightarrow    \match( \graft_{s, e_3} (\calg_1 \sqcup \calg_2))$.
\end{itemize}
\end{thm}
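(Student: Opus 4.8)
The plan is to build the bijection explicitly as a cut-and-paste (``surgery'') operation on matchings that is localized at the two \emph{gateway} interior edges of the overlap, and to let the sign function, together with Lemma~\ref{lemNE}, decide into which of the two summands of the resolution a given matching of $\calg_1\sqcup\calg_2$ is sent. The starting observation is an elementary gluing principle: if a snake graph $\calh$ is obtained by gluing two subgraphs $\calh',\calh''$ along a common interior edge $e$, then a pair of matchings of $\calh',\calh''$ glues to a matching of $\calh$ precisely when $e$ lies in both restrictions or in neither, and conversely every matching of $\calh$ restricts to such a compatible pair. Since each of $\calg_3,\calg_4,\calg_5,\calg_6$ is obtained from $\calg_1,\calg_2$ by cutting at the gateway edges $e_{s-1},e_t$ (and $e'_{s'-1},e'_{t'}$) and regluing, this principle reduces the whole problem to a finite local analysis at those edges.

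Next I would fix a sign function $f$ on the overlap $\calg$, inducing $f_1,f_2$ on $\calg_1,\calg_2$. Maximality of the overlap supplies the relations $f_1(e_{s-1})=-f_2(e'_{s'-1})$ and $f_1(e_t)=-f_2(e'_{t'})$ recorded before Definition~\ref{crossing}, and the crossing hypothesis adds $f_1(e_{s-1})=-f_1(e_t)$ (or its analogue on $\calg_2$). Given $(P_1,P_2)\in\match(\calg_1\sqcup\calg_2)$, I would form the tentative tail-swap, assigning the front-plus-overlap of $\calg_1$ and the tail of $\calg_2$ to $\calg_3$ and symmetrically to $\calg_4$, and separately the tentative symmetric difference on $\calg_5',\calg_6'$. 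By the gluing principle, the tail-swap yields a valid pair $(P_3,P_4)$ exactly when the gateway states at the overlap end agree (the matchings occupy the corresponding gateway edges in the same way), whereas the symmetric-difference assignment yields a valid $(P_5,P_6)$ exactly when they disagree. The crucial claim to verify is that, under the crossing condition, the above sign relations force \emph{exactly one} of these two alternatives to hold for every $(P_1,P_2)$; this is the decision rule defining the map, and running the same surgery backwards furnishes the inverse, so the two surgeries are mutually inverse bijections onto the two summands of $\match(\res_\calg(\calg_1,\calg_2))$.

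It then remains to treat the boundary cases $s=1$, $t=d$, $s'=1$, $t'=d'$, which are exactly the cases in which $\calg_5,\calg_6$ appear as truncations $\calg_5'\setminus\suc(e)$, $\calg_5'\setminus\pred(e)$, and so on. Here the relevant tail of $\calg_5'$ or $\calg_6'$ has a free end, and by Lemma~\ref{lemNE}(3)--(4) its matching is uniquely determined once the sign at the cutting edge is fixed; choosing $e$ to be the first (respectively last) edge whose sign agrees with the appropriate gateway sign $f_5(e_{s-1})$, $f_5(e'_{s-1})$, etc., is precisely what makes the restricted matching of the truncated graph correspond to the boundary behaviour of $(P_1,P_2)$, including the degenerate outcomes where a factor collapses to a single edge or to $0$. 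The grafting statement~(2) follows by the same scheme: one performs the cut-and-paste at the grafting edges $\delta_3,\delta_5$ of Definition~\ref{grafting}, uses $f_1(\delta_3)$ to select the truncations, and obtains the dichotomy between $(\calg_3,\calg_4)$ and $(\calg_5,\calg_6)$ from agreement versus disagreement of the gateway states.

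I expect the main obstacle to lie in the bookkeeping of these boundary and degenerate cases rather than in the generic surgery: one must check that the truncation edge selected by the sign rule is exactly the edge under which the \emph{unique} tail matching guaranteed by Lemma~\ref{lemNE} matches the imposed boundary data of $(P_1,P_2)$, and that this continues to hold when the truncation empties a factor or produces a lone edge. A secondary but genuine point is establishing that the whole construction is independent of the chosen sign function $f$, so that the bijection is canonical; this should follow from the fact that the two sign functions differ by a global swap and that Lemma~\ref{lemNE}(1)--(2) makes all the relevant sign comparisons invariant under it.
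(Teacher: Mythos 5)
There is a genuine gap, and it is the central one: the bijection cannot be produced by a surgery localized at the gateway edges $e_{s-1},e_t,e'_{s'-1},e'_{t'}$, because the position at which one matching must be exchanged for the other can lie strictly \emph{inside} the overlap, where no gateway criterion can see it. Concretely, let $\calg_1=(G_1,\dots,G_5)$ be the straight horizontal snake graph, let $\calg_2=(G'_1,\dots,G'_5)$ have $G'_2$ north of $G'_1$, then $G'_3,G'_4$ east, then $G'_5$ north of $G'_4$, and take the overlap $\calg=\calg_1[2,4]\cong\calg_2[2,4]$ in the same direction (three straight tiles); since signs alternate along the straight graph $\calg_1$, we have $f_1(e_1)=-f_1(e_4)$, so this is a crossing overlap. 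Now take
\[P_1=\{\textup{west edge of }G_1,\ \textup{south and north edges of }G_2,\ e_3,\ \textup{south and north edges of }G_5\},\]
\[P_2=\{\textup{west and east edges of }G'_1,\ \textup{north edge of }G'_2,\ e'_3,\ \textup{east edge of }G'_4,\ \textup{north edge of }G'_5\}.\]
Both are perfect matchings. Here $e_4\notin P_1$ and its endpoints are matched by edges of $G_5$, while $e'_4\notin P_2$ and its endpoints are matched from the overlap side; symmetrically at $e_1,e'_1$. Consequently your tail-swap fails ($P_1|_{\calg_1[1,4]}$ leaves the endpoints of $e_4$ uncovered, and the transported $P_2|_{G'_5}$ does not cover them), the analogous crossover at the entry gateway fails for the same reason, \emph{and} the symmetric-difference assignment fails as well: in $\calg_5=\calg_1[1,1]\cup\ocalg_2[1,1]$ the glue edge is the east edge of $G'_1$, which lies in $P_2$, while the west edge of $G_1$ lies in $P_1$, so $(P_1\cup P_2)|_{\calg_5}$ covers a glued vertex twice and is not even a matching. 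Thus under your decision rule this pair has no image at all. It does, however, admit a switching position at the \emph{interior} edge $e_2\cong e'_2$ of the overlap: exchanging $P_1$ and $P_2$ there yields perfect matchings of $\calg_3$ and of $\calg_4$, and that is where the bijection of \cite[Theorem 3.1]{CS} sends this pair. This is exactly why the proof in \cite{CS}, whose mechanism is recalled in section~\ref{switching}, is organized around locating the \emph{first} switching position anywhere in the overlap (the case analysis of Figures 6--11 in \cite{CS}), with \cite[Lemma 7.3]{CS} guaranteeing existence of a switching position on the $(\calg_3,\calg_4)$ side and \cite[Lemma 7.4]{CS} providing the unique boundary-edge completion on the $(\calg_5,\calg_6)$ side. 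No rule that examines only the gateway edges can reproduce this, since whether and where a crossover is possible depends on the matchings deep inside the overlap.

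A secondary but real error is your ``gluing principle.'' If $\calh$ is obtained by gluing $\calh'$ and $\calh''$ along an edge $e$, then a pair of perfect matchings of $\calh'$ and $\calh''$ glues to a perfect matching of $\calh$ only when \emph{both} contain $e$; if neither contains $e$, the union covers each endpoint of $e$ twice. Conversely, a perfect matching of $\calh$ not containing $e$ restricts on one side to a perfect matching and on the other side to a matching missing exactly the two endpoints of $e$ --- not to a compatible pair in your sense. This asymmetry (which side ``owns'' the endpoints of the cut edge) is precisely the information that the switching position, and the sign-selected truncations $\calg_5'\setminus\suc(e)$, $\calg_5'\setminus\pred(e)$ in Definition~\ref{resolution} together with Lemma~\ref{lemNE}, are designed to track; an agreement/disagreement dichotomy at the gateways cannot recover it.
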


We now extend this construction to give bijections between sets of perfect matchings of a self-crossing snake graph and of the resolution of the  self-crossing. 

\subsection{Switching operation}\label{switching}
The following construction was introduced in \cite{CS}. Let $\calg_1,\calg_2$ be two snake graphs  with a crossing local overlap $\calg$ and embeddings $i_1:\calg\to\calg_1$ and $i_2:\calg\to \calg_2$. Let $\calg_3,\calg_4$ be the pair of snake graphs in the resolution of the crossing which contain the overlap. Thus $\calg_3$ contains the initial part of $\calg_1$, the overlap, and the terminal part of $\calg_2$, whereas $\calg_4$
contains the initial part of $\calg_2$, the overlap, and the terminal part of $\calg_1$.

Given two perfect matchings $P_1\in\match\calg_1$ and $P_2\in \match\calg_2$, there may or may not exist a \emph{switching position}, that is, a position in the overlap such that using the edges of $P_1$ up to that position and the edges of $P_2$ after that position yield a perfect matching on $\calg_3$.  In this case, using the edges of $P_2$ up to the switching position and those of $P_1$ after the switching position {also} yields a matching of $\calg_4$. {The possible local configurations of the perfect matchings at the switching position are explicitly listed in Figures 6-11 in \cite[section 3]{CS}.}

If a switching position exists, we define the \emph{switching operation} to be the map $(P_1,P_2)\mapsto (P_3,P_4)$, 
where $P_3$, respectively $P_4$, is the matching of $\calg_3$, respectively $\calg_4$, obtained by applying the method above at the first switching position. In the same way, we define the switching operation sending matchings of $ \calg_3\sqcup \calg_4$ to matchings of $\calg_1\sqcup\calg_2$. {
If no switching position exists, then the restrictions $(P_1\cup P_2)|_{\calg_5}$  and $(P_1\cup P_2)|_{\calg_6}$ are perfect matchings of $\calg_5$ and $\calg_6$, respectively.}

The switching operation generalizes in a straightforward
way when we consider a single snake graph $\calg_1$ with a crossing self-overlap instead of the pair $(\calg_1,\calg_2)$, as long as the self-overlap does not have an intersection.

By Definition \ref{resolution self-crossing}, $\calg_3$ and $ \calg_4^\circ$ are still two distinct graphs if the overlaps have the same orientation, and if the orientations are opposite, then, by Definition~\ref{resolution self-crossing in opposite direction}, we obtain a single snake graph $\calg_{34}$.

Throughout the rest of this section let $\calg_1=(G_1,\ldots,G_d)$ be a snake graph with self-crossing in the overlap $i_1(\calg)=\calg_1[s,t]\cong\calg_1[s',t']=i_2(\calg)$, with $s<s'$, and let $f$ be a sign function on $\calg_1$.
\subsection{Self-crossing case 1: Overlap in the same direction and {$\bf{s'>t+1}$.}}\label{sect 3.3}
Let $P_1\in\match\calg_1$. If $P_1$ has a switching position in $i_1(\calg)$ and $i_2(\calg)$, we define $P_3\in\match\calg_3$ by 
\[\begin{array}{lll}P_3=P_1& \textup{on $\calg_1[1,s-1]\cup \calg_1[t'+1,d]$,}\\  
P_3=\left\{\begin{array}{ll}
P_1|_{i_1(\calg)} &\textup{before the first switching position;}\\
P_1|_{i_2(\calg)} &\textup{after the first switching position;}\\\end{array}\right\}& \textup{on $\calg$.}\end{array}\]
And we define $P_4\in\match\calg_4^\circ$ by
\[\begin{array}{lll}P_4=P_1& \textup{on $\calg_1[t+1,s'-1]$,}\\  
P_4=\left\{\begin{array}{ll}
P_1|_{i_2(\calg)} &\textup{before the first switching position;}\\
P_1|_{i_1(\calg)} &\textup{after the first switching position;}\\\end{array}\right\}& \textup{on $\calg$.}\end{array}\]
{With this notation,} we define 
\[\varphi(P_1)=\left\{\begin{array}{ll} (P_3,P_4) &\textup{if $P_1$ has a switching position in $i_1(\calg)$ and $i_2(\calg)$;} \\
P_{56}=P_1|_{\calg_{56}};
&\textup{if $P_1$ has no switching position in $i_1(\calg)$ and $i_2(\calg)$.}\end{array}\right.
\]

To show that $\varphi$ is a bijection, we define its inverse function $\psi$. Let $i_3:\calg\to \calg_3$ and $i_4:\calg\to \calg_4^\circ$ be the embeddings of the overlap, {and} let
$(P_3,P_4)\in \match\calg_3\times\calg_4^\circ$. It  follows from \cite[Lemma~7.3]{CS}
that $(P_3,P_4)$ has a switching position in $i_3(\calg)$ and $i_4(\calg)$.
We define $\psi(P_3,P_4)=P_1$ to be 
\[\begin{array}{lll}
P_1=P_3 &\textup{on $\calg_1[1,s-1]\cup\calg_1[t'+1,d]$} \\
P_1=P_4&\textup{on $\calg_1[t+1,s'-1]$}\\
P_1=\left\{\begin{array}{ll}
P_3|_{i_3(\calg)} &\textup{before the first switching position;}\\
P_4|_{i_4(\calg)} &\textup{after the first switching position;}\\\end{array}\right\}& \textup{on $i_1(\calg)$;}\\
P_1=\left\{\begin{array}{ll}
P_4|_{i_4(\calg)} &\textup{before the first switching position;}\\
P_3|_{i_3(\calg)} &\textup{after the first switching position;}\\\end{array}\right\}& \textup{on $i_2(\calg)$.}\end{array}\]

Now let $P_{56}\in\match\calg_{56}$. Define $\psi(P_{56})\in \match\calg_1$ {as follows. If $\calg_{56}\ne\calg_{56}'$ then first complete $P_{56}$ to a matching $P_{56}'$ of $\calg_{56}'$ using only boundary edges of $\calg_{56}'$ and then complete $P_{56}'$ to a matching of $\calg_1$ using only boundary edges of $\calg_1$.} 
 This unique completion does not depend on $P_{56}$ and it is determined by the fact that it does not contain the edges $e$ and $e'$, where $e$ is the unique edge in $_{SW}G_s$ such that $f(e)=f(e_{s'-1})$ and 
$e'$ is the unique edge in $G_{t'}^{N\!E}$ such that $f(e')=f(e_{t})$. 
{
Indeed, if $s\ne 1$, then $_{SW} G_s=\{e,e_{s-1}\}$ and the endpoints of $e_{s-1}$ are matched in $\psi(P_{56})$ by edges of $G_{s-1}$, so $e\notin \psi(P_{56})$. If $s=1$, then the endpoints of $e_t$ are matched in $\psi(P_{56})$ by edges of $G_{t+1}$, because $G_{t+1}\subset\calg_{56}'$. Since $e_t$ is in $G_t^{N\!E}$, it follows from Lemma~\ref{lemNE} that all north and east edges of $\calg_1[s,t]\cap \psi(P_{56})$ have the same sign as $e_t$ and all south and west edges have the opposite sign. Since $e\in {}_{SW}G_s$ and $f(e)=f(e_{s'-1})=f(e_t)$, it follows that $e\notin\psi(P_{56})$. The proof for $e'$ is similar.
}

It is clear that if $P_1$ has a switching position in $i_1(\calg)$ and $i_2(\calg)$ then the resulting pair $\varphi(P_1)=(P_3,P_4)$ has the same switching position in $i_3(\calg)$ and $i_4(\calg)$, and the resulting matching $\psi(P_3,P_4)=P_1$. Similarly, if $(P_3,P_4)$ has a switching position in $i_3(\calg)$ and $i_4(\calg)$
then the resulting matching $\psi(P_3,P_4)=P_1$ has the same switching position in $i_1(\calg)$ and $i_2(\calg)$ and the resulting pair is $\varphi(P_1)=(P_3,P_4)$.
Since we are always using the first switching position, it follows that $\varphi$ and $\psi$ are mutually inverse bijections between the sets 
\[\{P_1\in\match \calg_1\mid \textup{$P_1$ has a switching position in $i_1(\calg)$ and $i_2(\calg)$}\}\]
and $\match\calg_3\times\match\calg_4^\circ$.
On the other hand, it follows  from {\cite[section 3]{CS} that $\psi(P_{56})$ does not have a switching position and therefore} $\varphi$ and $\psi$ are mutually inverse bijections between the sets 
\[\{P_1\in\match \calg_1\mid \textup{$P_1$ has no switching position in $i_1(\calg)$ and $i_2(\calg)$}\}\]
and $\match\calg_{56}$.
We have proved the following theorem.

\begin{thm} \label{thm 3.7}In the case $s'>t+1$, the map \[\varphi:\match\calg_1\to{ \match(\res_\calg (\calg_1))}
\] is a bijection with inverse $\psi$.
\end{thm}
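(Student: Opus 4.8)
The plan is to read the proof off the explicit constructions of $\varphi$ and $\psi$ given above, organizing everything around the partition of $\match\calg_1$ according to the presence or absence of a switching position. Since $\res_\calg(\calg_1)=(\calg_3\sqcup\calg_4^\circ)+\calg_{56}$, the target decomposes as the disjoint union $(\match\calg_3\times\match\calg_4^\circ)\sqcup\match\calg_{56}$, and I would correspondingly split $\match\calg_1$ into the matchings $P_1$ that admit a switching position in $i_1(\calg)$ and $i_2(\calg)$ and those that do not. The whole argument then reduces to showing that $\varphi$ restricts to a bijection on each of the two pieces, with $\psi$ as its inverse.

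For the switching piece, the key structural fact is that the hypothesis $s'>t+1$ forces the two embedded copies $i_1(\calg)=\calg_1[s,t]$ and $i_2(\calg)=\calg_1[s',t']$ to be disjoint, so the self-overlap has no intersection and the switching operation of \cite{CS} carries over verbatim to the single graph $\calg_1$. First I would check that $\varphi(P_1)=(P_3,P_4)$ really lands in $\match\calg_3\times\match\calg_4^\circ$; in particular that $P_4$ is a \emph{good} perfect matching of the band graph $\calg_4^\circ=\calg_1^b[s,s'-1]$, which holds because the switching occurs inside the glued region $\calg$, so cutting $\calg_4^\circ$ at the switching edge exhibits $P_4$ as a matching of an underlying snake graph. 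To invert, I would invoke \cite[Lemma~7.3]{CS} to guarantee that every pair $(P_3,P_4)$ has a switching position, define $\psi(P_3,P_4)$ by the reverse recipe, and then verify $\psi\circ\varphi=\mathrm{id}$ and $\varphi\circ\psi=\mathrm{id}$ by observing that both maps preserve the \emph{first} switching position and agree edge by edge on each of the pieces $\calg_1[1,s-1]\cup\calg_1[t'+1,d]$, $\calg_1[t+1,s'-1]$, and the two copies of $\calg$.

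For the no-switching piece the map is simply $\varphi(P_1)=P_1|_{\calg_{56}}$. Here I would first confirm that this restriction is a perfect matching of $\calg_{56}$: in the absence of a switching position the edges of $P_1$ on the overlap are forced onto the boundary, exactly as in \cite[section~3]{CS}. The inverse $\psi$ completes $P_{56}$ using boundary edges only, and the heart of the matter---which I expect to be the main obstacle---is to show that this completion is \emph{unique} and independent of $P_{56}$. This is precisely where Lemma~\ref{lemNE} is needed: the sign conditions force the completion to omit the unique edge $e\in{}_{SW}G_s$ with $f(e)=f(e_{s'-1})$ and the unique edge $e'\in G_{t'}^{N\!E}$ with $f(e')=f(e_t)$, and this omission both pins down the completion and guarantees that the resulting matching of $\calg_1$ has \emph{no} switching position, so that it lands back in the correct subset.

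Finally I would assemble the two restricted bijections into the single bijection $\varphi\colon\match\calg_1\to\match(\res_\calg(\calg_1))$. The points demanding the most care are the goodness of the band-graph matchings on $\calg_4^\circ$, the forced nature of the boundary completion via Lemma~\ref{lemNE}, and the verification that the two subsets of $\match\calg_1$ are genuinely complementary, so that no matching is sent simultaneously into $\match\calg_3\times\match\calg_4^\circ$ and into $\match\calg_{56}$.
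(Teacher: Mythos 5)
Your proposal follows essentially the same route as the paper's own proof: the same partition of $\match\calg_1$ by existence of a switching position, the same use of \cite[Lemma~7.3]{CS} to guarantee a switching position for pairs $(P_3,P_4)$, the same boundary-edge completion for the no-switching piece pinned down via Lemma~\ref{lemNE} by the forced omission of the edges $e\in{}_{SW}G_s$ and $e'\in G_{t'}^{N\!E}$, and the same first-switching-position argument for mutual inversion. Your added attention to goodness of $P_4$ on the band graph and to the complementarity of the two subsets are points the paper leaves implicit, but they do not change the argument.
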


{Let us also point out the following useful fact.}
{
\begin{prop}
 In the case $s'>t+1$, let $P_{56}\in\match\calg_{56}$ and let $P_1=\psi(P_{56})$. Then $P_1|_{i_1(\calg)}$ and $P_1|_{i_2(\calg)}$ consist of boundary edges of $\calg$ only and are complementary as matchings of $\calg$.
\end{prop}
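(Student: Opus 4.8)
The plan is to read off both restrictions directly from the completion rule defining $\psi$ and then to invoke Lemma~\ref{lemNE}. Recall that $P_1=\psi(P_{56})$ is obtained by completing $P_{56}$ using boundary edges only, and that the discussion preceding the proposition has already singled out the two edges $e\in{}_{SW}G_s$ and $e'\in G_{t'}^{N\!E}$, with $f(e)=f(e_{s'-1})=f(e_t)$ and $f(e')=f(e_t)$, both of which are excluded from $P_1$. First I would check that neither restriction contains an interior edge of $\calg$: under $i_1$ (resp.\ $i_2$) the interior edges of $\calg$ correspond to $e_s,\dots,e_{t-1}$ (resp.\ $e_{s'},\dots,e_{t'-1}$), which are interior edges of $\calg_1$, whereas the completion uses only boundary edges of $\calg_1$ on the two copies of $\calg$. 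Hence $P_1|_{i_1(\calg)}$ and $P_1|_{i_2(\calg)}$ consist of boundary edges of $\calg$ only.

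Next I would show that each restriction is in fact a perfect matching of $\calg$. The internal vertices of $\calg_1[s,t]$ are incident only to edges of $\calg_1[s,t]$, and since $P_1$ avoids $e_s,\dots,e_{t-1}$ they are matched inside the copy by boundary edges. For the four vertices shared with the neighbouring tiles, namely the endpoints of $e_{s-1}$ and of $e_t$, I would use $e\notin P_1$ together with the zigzag propagation appearing in the proof of Lemma~\ref{lemNE} to force the extremal glueing edges $e_{s-1}$ and $e_t$ into $P_1$, so that these vertices are matched inside $\calg_1[s,t]$ as well; the argument for $\calg_1[s',t']$ is symmetric, using $e'\notin P_1$.

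Once both restrictions are known to be boundary-only perfect matchings of $\calg$, Lemma~\ref{lemNE} determines them completely. Since $e\in{}_{SW}\calg\subseteq\SW$ and $e\notin P_1|_{i_1(\calg)}$, part~(3) gives $P_1|_{i_1(\calg)}=\{b\in\NE\mid f(b)=f(e)\}\cup\{b\in\SW\mid f(b)=-f(e)\}$, and since $e'\in\calg^{N\!E}\subseteq\NE$ and $e'\notin P_1|_{i_2(\calg)}$, part~(4) gives $P_1|_{i_2(\calg)}=\{b\in\NE\mid f(b)=-f(e')\}\cup\{b\in\SW\mid f(b)=f(e')\}$. Because the self-overlap is in the same direction, the identification $i_2i_1^{-1}\colon\calg_1[s,t]\to\calg_1[s',t']$ respects the $\NE/\SW$ partition, and the relations from maximality, $f(e_{s-1})=-f(e_{s'-1})$ and $f(e_t)=-f(e_{t'})$, together with the self-crossing condition $f(e_t)=f(e_{s'-1})$, guarantee that this identification also respects $f$. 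As $e\in\SW$ while $e'\in\NE$, Lemma~\ref{lemNE}(3) and (4) select on the $\NE$-edges the opposite signs $f(e)=f(e_t)$ and $-f(e')=-f(e_t)$; hence the two matchings pick out complementary sets of $\NE$-edges, and likewise of $\SW$-edges. Comparing the two displayed descriptions then shows that $P_1|_{i_1(\calg)}$ and $P_1|_{i_2(\calg)}$ share no edge and that their union is the full set of boundary edges of $\calg$, i.e.\ they are complementary.

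The step I expect to be the main obstacle is the verification in the second paragraph, that the boundary vertices of each copy of $\calg$ are matched \emph{inside} that copy, so that the restrictions are genuine perfect matchings and not merely partial matchings; this is precisely where the explicit completion rule and the exclusion of $e,e'$ must be used, and it feeds into the orientation bookkeeping of the third paragraph, where one must confirm that $i_2i_1^{-1}$ preserves $f$ rather than reversing it. The degenerate single-tile overlaps, which by the remark around Figure~\ref{opposite2} may be read in either direction, should be treated as a separate case.
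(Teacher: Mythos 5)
Your overall strategy is the same as the paper's: extract ``boundary edges only'' from the completion rule defining $\psi$, feed the two excluded edges $e\in{}_{SW}G_s$ and $e'\in G_{t'}^{N\!E}$ into Lemma~\ref{lemNE}(3) and (4), and then compare signs (maximality together with the crossing condition $f(e_t)=f(e_{s'-1})$, hence $f(e)=f(e')$) to obtain complementarity. Your third paragraph, including the verification that $i_2i_1^{-1}$ preserves the sign function, is a correct and more explicit version of what the paper leaves implicit.

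The gap is in your second paragraph, and it is exactly the step you yourself flagged as the main obstacle. You claim that $e\notin P_1$ plus ``zigzag propagation'' forces $e_{s-1}\in P_1$ and $e_t\in P_1$, so that the endpoints of $e_{s-1}$ and $e_t$ are matched inside $i_1(\calg)$. This is not justified and fails in general. The endpoints of $e_{s-1}$ are vertices of $G_{s-1}$, hence vertices of $\calg_{56}'$ (in $\calg_{56}'$ the reversed middle piece $\ocalg_1[s'-1,t+1]$ is glued directly onto $G_{s-1}$); consequently in $P_1=\psi(P_{56})$ these two vertices are already matched by edges of $P_{56}$ itself, and nothing forces that matching edge to be $e_{s-1}$: a perfect matching of $\calg_{56}'$ may cover one endpoint by a boundary edge of $G_{s-1}$ and the other by an edge of the glued middle piece or by the glueing edge. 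In that situation $e_{s-1}\notin P_1$, the endpoints of $e_{s-1}$ are matched by edges lying \emph{outside} $i_1(\calg)$, and $P_1|_{i_1(\calg)}$ does not cover every vertex of $\calg$, so it is not a perfect matching; since parts (3)--(4) of Lemma~\ref{lemNE} are assertions about perfect matchings by boundary edges, the applications in your third paragraph, and with them the complementarity conclusion, collapse. Moreover, the ``zigzag propagation'' in the proof of Lemma~\ref{lemNE} concerns matchings consisting of boundary edges only, so it cannot be used to force the edges $e_{s-1},e_t$ (which are interior edges of $\calg_1$) into $P_1$. Note that the paper's own proof does not attempt this step either: it applies Lemma~\ref{lemNE} to the restrictions with the perfect-matching hypothesis left tacit. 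So you have correctly located the delicate point of the argument, but the forcing claim you propose to close it is false rather than merely incomplete; a correct treatment has to analyze how $P_{56}$ covers the endpoints of the glueing edges of $\calg_{56}'$, which your proof does not do.
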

\begin{proof}
 By definition the restriction of $P_1$ to both subgraphs consists of boundary edges only. {So we need to show complementarity.}
 
 $P_1$ does not contain the boundary edge $e$ in $_{SW}G_s$ whose sign is $f(e)=f(e_{s'-1})$. Then by Lemma~\ref{lemNE}, the matching $P_1|_{i_1(\calg)}$ consists precisely of all south and west boundary edges of $i_1(\calg)$ whose sign is $-f(e)$ and all north and east boundary edges of $i_1(\calg)$ whose sign is $f(e)$.
 Similarly  $P_1$ does not contain the boundary edge $e'$ in $G_{t'}^{NE}$ whose sign is $f(e')=f(e_t)$. Then again by Lemma~\ref{lemNE}, the matching $P_1|_{i_2(\calg)}$ consists precisely of all south and west boundary edges of $i_1(\calg)$ whose sign is $f(e')$ and all north and east boundary edges of $i_2(\calg)$ whose sign is $-f(e')$.

 Since $i_1(\calg)$ and $i_2(\calg)$ form a  crossing overlap, we have $f(e_t)=
 f(e_{s'-1})$ and thus $f(e)=f(e')$. 
This shows that  $P_1|_{i_1(\calg)}$ and $P_1|_{i_2(\calg)}$ consist of boundary edges of $\calg$ only and are complementary as matchings of $\calg$.\end{proof}

}
\subsection{Self-crossing case 2: Overlap in the opposite direction.}
Recall that 
\[\calg_{34}=\calg_1[1,t]\cup\ocalg_1[{ s'-1,t+1}]\cup\calg_1[s',d],\]
where $i_1(\calg)=\calg_1[s,t]$ and $i_2 (\calg)=\calg_1[s',t']$ are the self-overlaps.

Let $P_1\in\match\calg_1$. If $P_1$ has a switching position in $i_1(\calg)$ and $i_2(\calg)$, choose the first switching position and define a matching $P_{34}$ by
\[P_{34}=P_1\qquad\textup{on ${\calg_1[1,s-1]\cup\ocalg_1[{ s'-1,t+1}]\cup\calg_1[t'+1,d]}$} \]
\[\begin{array}{lll}P_{34}=\left\{\begin{array}{ll}
P_1|_{i_1(\calg)} &\textup{before the first switching position;}\\
P_1|_{i_2(\calg)} &\textup{after the first switching position;}\\\end{array}\right\}& \textup{on $i_1(\calg)$.}\end{array}\]
\[\begin{array}{lll}P_{34}=\left\{\begin{array}{ll}
P_1|_{i_2(\calg)} &\textup{before the first switching position;}\\
P_1|_{i_1(\calg)} &\textup{after the first switching position;}\\\end{array}\right\}& 
\textup{on $i_2(\calg)$.}
\end{array}\]

If $P_1$ has no switching position in $i_1(\calg)$ and $i_2(\calg)$, we define $P_5\in\match \calg_5$ by $P_5=P_1|_{\calg_5}$ and $P_6\in\match \calg_6^b$ by 
\[P_6=\left\{\begin{array}{ll}
P_1|_{\calg_6\setminus\{b\}} &\textup{if both $b,b'\in P_1$};\\
P_1|_{\calg_6} &\textup{otherwise}.\end{array}\right.\]

{With this notation,} we define 
\[\varphi(P_1)=\left\{\begin{array}{ll} P_{34} &\textup{if $P_1$ has a switching position in $i_1(\calg)$ and $i_2(\calg)$}; \\
(P_5,P_6)
&\textup{otherwise}.\end{array}\right.
\]

To show that $\varphi$ is a bijection, we define its inverse function $\psi$. Let $P_{34}\in\match\calg_{34}$ and $(P_5,P_6)\in\match\calg_5\times\calg_6^\circ$. Define $\psi(P_{34})$ to be the matching of $\calg_1$ obtained from $P_{34}$ by switching at the first possible switching position. Thus 
\[
\psi(P_{34})=P_{34} \qquad \textup{on $\calg_1[1,s-1]\cup\ocalg_1[{ s'-1,t+1}]\cup\calg_1[t'+1,d]$}\] 
\[\begin{array}{lll}
\psi(P_{34})=\left\{\begin{array}{ll}
P_{34}|_{j_1(\calg)} &\textup{before the first switching position;}\\
P_{34}|_{j_2(\calg)} &\textup{after the first switching position;}\\\end{array}\right\}& \textup{on $i_1(\calg)$;}\\
\psi(P_{34})=\left\{\begin{array}{ll}
P_{34}|_{j_2(\calg)} &\textup{before the first switching position;}\\
P_{34}|_{j_1(\calg)} &\textup{after the first switching position;}\\\end{array}\right\}& \textup{on $i_2(\calg)$;}\end{array}\]
where $j_1:\calg\to\calg_{34}$, $j_2:\calg\to\calg_{34}$
are the embeddings of the overlap such that $j_1(\calg)\subset \calg_{34}$ corresponds to $i_1(\calg)\subset\calg_1$.

On the other hand, $\psi(P_5,P_6)$ is the unique matching of $\calg_1$ whose restriction to $\calg_5\sqcup\calg_6^\circ$ is the pair $(P_5,P_6)$ and whose restriction to the complement $\calg_1\setminus(\calg_5\cup\calg_6^\circ)
$ consists of boundary edges only.

\begin{lem}\label{lem opp}
 $P_6\in\match\calg_6^\circ$.
\end{lem}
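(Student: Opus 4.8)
The plan is to show that $P_6$ is a good perfect matching of the band graph $\calg_6^\circ=\calg_1^b[t+1,s'-1]$ by first controlling the behaviour of $P_1$ on the two overlap copies and then carrying out a local vertex-count at the glued edge $b=b'$. First I would exploit the hypothesis that $P_1$ has no switching position in $i_1(\calg)$ and $i_2(\calg)$. By the analysis of switching positions in \cite[Section~3]{CS} (the local configurations listed there), the absence of a switching position forces the restrictions $P_1|_{i_1(\calg)}$ and $P_1|_{i_2(\calg)}$ to consist of boundary edges of $\calg$ only. By Lemma~\ref{lemNE} each restriction is then one of the two extremal boundary matchings of $\calg$, each determined by a single sign. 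Since the self-overlap is in the opposite direction, the embedding $i_2$ reverses orientation and exchanges the roles of the north/east and south/west boundary edges; combined with the overlap sign relations and the self-crossing condition of Definition~\ref{def self-crossing}, this shows that the two restrictions are complementary.

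I would then translate this, again via Lemma~\ref{lemNE}, into precise information about the two ends of $\calg_1[t+1,s'-1]$: whether $e_t$ and $e_{s'-1}$ lie in $P_1$, and how the corner vertices of $G_{t+1}$ and $G_{s'-1}$ incident to $b$ and $b'$ are matched. With this in hand I would check that $P_6$ is a genuine perfect matching of $\calg_6^\circ$. The band graph $\calg_6^\circ$ has the vertices of the snake graph $\calg_1[t+1,s'-1]$ with the endpoints of $b$ identified with those of $b'$. Every vertex of $\calg_1[t+1,s'-1]$ not incident to $e_t$ or $e_{s'-1}$ is matched by $P_1$ through an edge of the subgraph, hence is covered exactly once by $P_6$, so only the corner vertices adjacent to the cut/glue require attention. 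Using the sign data above I would verify in the two cases of the definition that, after the identification $b\sim b'$, every merged vertex is covered exactly once: when both $b,b'\in P_1$, deleting the one copy $b$ leaves the single glued edge covering the two merged vertices without repetition; when not both lie in $P_1$, the merged vertices are covered by edges internal to $\calg_1[t+1,s'-1]$, with complementarity guaranteeing that no vertex is covered twice or left uncovered.

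Finally, for goodness I would exhibit an interior edge $e$ of $\calg_6^\circ$ with $e\notin P_6$ such that $P_6\sqcup\{e\}$ is a perfect matching of the snake graph $(\calg_6^\circ)_e$. The natural choice is to cut along the glued edge $b$ whenever $b\notin P_6$, so that $(\calg_6^\circ)_b=\calg_1[t+1,s'-1]$ and one checks directly, using the coverage established above, that adding $b$ completes $P_6$ to a perfect matching of the subgraph; in the remaining case $b\in P_6$ (when both $b,b'\in P_1$) one cuts instead along an interior edge of $\calg_1[t+1,s'-1]$ not used by $P_6$, treating the degenerate single-tile situation separately. The main obstacle is the local bookkeeping of the middle step: reconciling the identification $b\sim b'$ with the way $P_1$ matches the endpoints of $e_t$ and $e_{s'-1}$. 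The several geometric configurations (each successor tile lying north or east of its predecessor) produce distinct cases, and in each one must confirm that the no-switching-position hypothesis forces exactly the coverage needed for the band graph to be matched, neither double-covering a merged vertex nor leaving one exposed.
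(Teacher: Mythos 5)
Your route is genuinely different from the paper's, and it could in principle be made to work, but the parts you defer are not routine bookkeeping --- they are the entire content of the lemma. The paper avoids all local analysis at the glued edge by a cutting trick: since the two copies of the overlap are disjoint when the overlap is in the opposite direction, one can cut $\calg_1$ along the interior edge $e_{t+1}$ into $\calg_1'=\calg_1[1,t+1]$ and $\calg_2'=\calg_1[t+2,d]$; the matching $P_1$ induces perfect matchings on the two pieces (one of the pieces receives the cut edge), the pair $(\calg_1',\calg_2')$ is then a pair of crossing snake graphs whose overlap copies lie in \emph{different} graphs, and the induced pair of matchings has no switching position. Theorem \ref{bijections} (Theorem 3.1 of \cite{CS}) applies wholesale: the restriction to the $\calg_6$-part of that pair resolution, which is exactly the snake graph $(\calg_6^\circ)_{e_{t+1}}$, is a perfect matching containing the cut edge, so deleting $e_{t+1}$ yields a perfect matching of $\calg_6^\circ$ which is good by construction (goodness is witnessed by $e=e_{t+1}$). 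Only the case where $\calg_6^\circ$ is a single tile, where no such cut is available, requires a separate check of two explicit configurations. So the paper reuses the two-snake-graph theorem; you propose instead to re-derive its local consequences by hand.

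Two concrete warnings about your plan. First, Lemma \ref{lemNE} concerns matchings of $\calg$ that cover every vertex by boundary edges; the restrictions $P_1|_{i_1(\calg)}$ and $P_1|_{i_2(\calg)}$ need not be perfect matchings of $\calg$, since the vertices at the ends of each copy may be matched in $P_1$ by edges of $G_{s-1}$, $G_{t+1}$, $G_{s'-1}$ or $G_{t'+1}$ lying outside the copy (moreover $e_t$ and $e_{s'-1}$ are themselves \emph{boundary} edges of the respective copies, so ``boundary edges only'' does not by itself settle how their endpoints are matched). Hence ``each restriction is one of the two extremal boundary matchings'' is not literally available and must be replaced by a more careful statement. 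Second, the crucial claim --- that the endpoints of $e_t$ and of $e_{s'-1}$ are matched in $P_1$ by edges of $\calg_1[t+1,s'-1]$, which is also exactly the one-sidedness your goodness argument via cutting at $b$ needs --- is precisely what has to be extracted, configuration by configuration, from the absence of a switching position using Figures 6--11 of \cite{CS}. If you carry out that case analysis your proof closes, but at that point you will essentially have reproved the relevant half of Theorem \ref{bijections}, which the paper's cut-and-reduce argument obtains with no new work.
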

\begin{proof}
 Suppose first that $\calg_6^\circ$ consists of at least two tiles. Let $\hat\calg_6=(\calg_6^\circ)_{e_{t+1}}$ be the snake graph obtained from $\calg_6^\circ$ by cutting along the interior edge $e_{t+1}$ between the tiles $G_{t+1}$ and $G_{t+2}$. We cut $\calg_1$ along the same edge into the two snake graphs
 \[\calg_1'=\calg_1[1,t+1], \quad \calg_2'=\calg_1[t+2,d],
\]
with $t+2<s'$ and obtain two perfect matchings $P_1'$ and $P_2'$ of $\calg_1'$ and $\calg_2'$ respectively, one by restricting $P_1$ and the other by restricting $P_1$ and adding the cut edge. Say $P_1'=P_1|_{\calg_1'}$ and $P_2'=P_2|_{\calg_2'}\cup\{\textup{cut edge}\}$. 
Observe that $\calg_1'$ and $\calg_2'$  have a crossing overlap $i_1(\calg)$, $i_2(\calg)$ and $(P_1',P_2')$ is a pair of matchings without switching position in the overlap. Theorem 3.1 of \cite{CS} implies that $(P_1'\cup P_2')|_{\hat\calg_6}$ is a perfect matching of the band graph $\calg_6^\circ$. Observe that, since $P_1$ originally was a matching of the snake graph $\calg_1$, its restriction to $\calg_6$ is a good matching of the band graph. This completes the proof in this case.

Suppose now that $\calg_6^\circ$ consists of a single tile. Since there is no switching position at the tiles $G_t,G_{s'}$,  we see from \cite[Figure 7]{CS} that the local configuration is one of the two shown in Figure~\ref{fig lem opp}. { Note that the tiles $(G_t,G_{t+1},G_{s'})$ must form a zigzag, since the crossing overlap condition implies that $f(e_t)=f(e_{s'-1})$.}

\begin{figure}
\begin{center}
\tiny\scalebox{1}{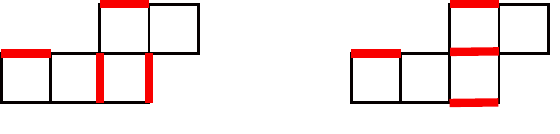}
\caption{Proof of Lemma \ref{lem opp}}
\label{fig lem opp}
\end{center}
\end{figure}

In both cases, $\calg_6^\circ$ is glued along the boundary edges $b,b'$ of the tile $G_{t+1}$, and thus $P_6\in\match\calg_6$.
\end{proof}

The following lemma shows that $\psi$ is well-defined.
\begin{lem}
 \begin{itemize}
\item [(a)] $P_{34}$ always has a switching position in $j_1(\calg)$ and $j_2(\calg)$. 
\item [(b)] $P_5\cup P_6$ can be completed to a matching of $\calg_1$ using only boundary edges.
\item[(c)] For every pair of matchings $(P_5,P_6)$, the completion in (b) is unique and complementary on the overlaps $i_1(\calg)$ and $i_2(\calg)$.
\end{itemize}
\end{lem}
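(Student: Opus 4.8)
The plan is to prove the three statements separately, deriving (a) from the crossing-pair results of \cite{CS} by a cutting argument in the spirit of Lemma~\ref{lem opp}, and obtaining (b) and (c) as boundary-edge completions governed by Lemma~\ref{lemNE}, exactly parallel to the same-direction case treated before Theorem~\ref{thm 3.7}. Throughout I keep in mind that the complement $\calg_1\setminus(\calg_5\cup\calg_6^\circ)$ is precisely the union of the two overlap copies $i_1(\calg)=\calg_1[s,t]$ and $i_2(\calg)=\calg_1[s',t']$, since $\calg_5'=\calg_1[1,s-1]\cup\calg_1[t'+1,d]$ together with the band graph $\calg_6^\circ=\calg_1^b[t+1,s'-1]$ covers exactly the tiles lying outside these two copies.

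For (a), the key point is that inside $\calg_{34}$ the two copies $j_1(\calg)$ and $j_2(\calg)$ are separated by the \emph{reflected} block $\ocalg_1[s'-1,t+1]$. I would cut $\calg_{34}$ along the interior edge $e_t$ (the gluing edge between $\ocalg_1[s'-1,t+1]$ and $\calg_1[s',d]$) and restrict $P_{34}$ to the two pieces $\mathcal{C}=\calg_1[1,t]\cup\ocalg_1[s'-1,t+1]$ and $\mathcal{D}=\calg_1[s',d]$ by the cutting procedure used in Lemma~\ref{lem opp} (adding the cut edge to one piece). These two snake graphs overlap in $\calg$, with images $j_1(\calg)\subseteq\mathcal{C}$ and $j_2(\calg)\subseteq\mathcal{D}$, and the decisive observation is that, because the middle block has been reflected, this overlap is \emph{non-crossing} in the sense of Definition~\ref{crossing}: the relation $f(e_t)=f(e_{s'-1})$ of Definition~\ref{def self-crossing} is turned into the non-crossing configuration for the reflected pair. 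Granting this, \cite[Lemma~7.3]{CS} applies to $(\mathcal{C},\mathcal{D})$ and yields a switching position in $j_1(\calg)$ and $j_2(\calg)$, which is the assertion. I expect the verification of the non-crossing sign condition --- tracking the signs of the edges bordering the two copies through the reflection of the middle block --- to be the main obstacle, as it is the one genuinely new input beyond \cite{CS}.

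For (b), when $\calg_5\ne\calg_5'$ (the cases $s=1$ or $t'=d$) I would first extend $P_5$ to a matching of $\calg_5'$ using only boundary edges, as in the same-direction construction, so that in all cases the unmatched vertices lie in $i_1(\calg)\cup i_2(\calg)$. Each of these two copies is a snake graph whose vertices on the seams with $\calg_5'$ and $\calg_6^\circ$ (at the edges $e_{s-1},e_t,e_{s'-1},e_{t'}$) are already matched by $P_5$ or $P_6$; Lemma~\ref{lemNE}(3)--(4) then guarantees that the remaining vertices of each copy can be matched by boundary edges, producing the desired completion of $P_5\cup P_6$ to a matching of $\calg_1$. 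Degenerate configurations in which the band graph $\calg_6^\circ$ is very small are handled in the same way.

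For (c), uniqueness is immediate from Lemma~\ref{lemNE}: a matching of a snake graph by boundary edges is completely determined by the sign of any one of its edges, and on each copy this sign is pinned down by the already-matched seam edge coming from $P_5$ or $P_6$, so the completion in (b) is forced. For complementarity I would argue exactly as in the Proposition following Theorem~\ref{thm 3.7}: both $P_1|_{i_1(\calg)}$ and $P_1|_{i_2(\calg)}$ consist of boundary edges of $\calg$ by construction, and Lemma~\ref{lemNE} exhibits each as the boundary matching selected by a single sign. The crossing relation $f(e_t)=f(e_{s'-1})$ together with the \emph{opposite} orientation of the two embeddings forces these two selecting signs to be opposite once transported through $\calg$, whence the two restrictions are complementary matchings of $\calg$, completing the proof that $\psi$ is well defined.
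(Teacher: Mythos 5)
Your plan is essentially the paper's own proof: the paper disposes of this lemma in one line, citing \cite[Lemma 7.3]{CS} for (a) and \cite[Lemma 7.4]{CS} for (b) and (c), and your cutting reduction and boundary-edge completions are precisely the mechanisms behind those citations (your use of Lemma~\ref{lemNE} for (b) and (c) mirrors the paper's own treatment of the same-direction case in section~\ref{sect 3.3} and of the Proposition following Theorem~\ref{thm 3.7}). The only step you leave open --- that the pair $(\mathcal{C},\mathcal{D})$ obtained by cutting $\calg_{34}$ along the glued copy of $e_t$ has a \emph{non-crossing} overlap --- does close, and in one line, so it is not a genuine gap: the gluing edge $\epsilon$ between $\calg_1[1,t]$ and the reflected middle block is the unique boundary edge in $G_t^{N\!E}$, so the sign rule on the tile $G_t$ gives $f(\epsilon)=-f(e_t)$, while Definition~\ref{def self-crossing}(i) gives $f(e_{s-1})=-f(e_t)$ when $s>1$; hence $f(e_{s-1})=f(\epsilon)$, so condition (i) of Definition~\ref{crossing} fails in $\mathcal{C}$ (and is vacuous if $s=1$), while in $\mathcal{D}=\calg_1[s',d]$ the overlap begins at the first tile, so neither alternative of condition (i) nor condition (ii) can trigger there. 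Note also that the parity argument for snake graphs guarantees that restricting $P_{34}$ to the two pieces of the cut (adding the cut edge to one piece when necessary) always yields perfect matchings, so the reduction to \cite[Lemma 7.3]{CS} is legitimate. With these observations your argument for (a), and the Lemma~\ref{lemNE} completion, uniqueness and complementarity arguments for (b) and (c), go through as stated.
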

\begin{proof}
 Part (a) follows from \cite[Lemma 7.3]{CS} and parts (b) and (c) from \cite[Lemma 7.4]{CS}.
\end{proof}

\begin{thm} If the overlap is in the opposite direction then the map \[\varphi:\match\calg_1\to { \match(\res_\calg(\calg_1))}
\] is a bijection with inverse $\psi$.
\end{thm}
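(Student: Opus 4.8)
The plan is to mirror the proof of Theorem~\ref{thm 3.7}, splitting $\match\calg_1$ into the matchings that admit a switching position in $i_1(\calg)$ and $i_2(\calg)$ and those that do not, and to show that $\varphi$ and $\psi$ restrict to mutually inverse bijections on each piece. The two pieces map respectively onto $\match\calg_{34}$ and onto $\match\calg_5\times\match\calg_6^\circ$, which together constitute $\match(\res_\calg(\calg_1))$.

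First I would treat the subset of matchings $P_1$ that possess a switching position. Here $\varphi$ produces $P_{34}\in\match\calg_{34}$ by switching at the first such position. By part~(a) of the preceding lemma, every $P_{34}\in\match\calg_{34}$ has a switching position in $j_1(\calg)$ and $j_2(\calg)$, so $\psi(P_{34})$ is defined. Because both $\varphi$ and $\psi$ use the \emph{first} switching position and the switch is an involution on the overlap, the first switching position of $P_1$ coincides with that of $\varphi(P_1)=P_{34}$, and symmetrically for $\psi$. Hence $\psi\circ\varphi=\mathrm{id}$ and $\varphi\circ\psi=\mathrm{id}$ on this piece, yielding a bijection onto $\match\calg_{34}$.

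Next I would treat the subset of matchings with no switching position, where $\varphi(P_1)=(P_5,P_6)$ with $P_5=P_1|_{\calg_5}$ and $P_6$ the restriction to $\calg_6$, deleting the glueing edge $b$ when both $b,b'\in P_1$. Lemma~\ref{lem opp} guarantees that $P_6$ is a good matching of the band graph $\calg_6^\circ$, so the image lands in $\match\calg_5\times\match\calg_6^\circ$. Conversely, parts~(b) and~(c) of the preceding lemma show that $\psi(P_5,P_6)$ is the unique completion of $P_5\cup P_6$ by boundary edges, and that this completion is complementary on the overlaps $i_1(\calg)$ and $i_2(\calg)$. This gives $\varphi(\psi(P_5,P_6))=(P_5,P_6)$, while $\psi(\varphi(P_1))=P_1$ follows from the uniqueness of the boundary completion in~(c).

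The main obstacle will be verifying that the two subsets are genuinely complementary and preserved under $\varphi,\psi$ — that is, that $\psi(P_5,P_6)$ has no switching position whereas $\psi(P_{34})$ does — since this is precisely what upgrades the two piecewise bijections to global inverses. This rests on the local matching configurations at the overlap boundary catalogued in \cite[section~3]{CS}, together with the complementarity statement in~(c): having no switching position is equivalent to the restrictions of the matching to $i_1(\calg)$ and $i_2(\calg)$ being complementary collections of boundary edges. Once this dichotomy is pinned down, combining the two piecewise bijections proves that $\varphi:\match\calg_1\to\match(\res_\calg(\calg_1))$ is a bijection with inverse $\psi$.
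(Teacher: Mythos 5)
Your proposal is correct and follows essentially the same route as the paper: the paper's proof is literally the one-line statement that the argument is analogous to Theorem~\ref{thm 3.7}, and what you have written is exactly that analogy spelled out — the split according to existence of a switching position, Lemma~\ref{lem opp} for $P_6\in\match\calg_6^\circ$, parts (a)--(c) of the preceding lemma for well-definedness of $\psi$, and the use of the first switching position plus uniqueness of the boundary completion to get mutual inversion. No gaps; your version is simply more explicit than the paper's.
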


\begin{proof} The proof is analogous to the proof of Theorem \ref{thm 3.7}.
\end{proof}

\subsection{Self-crossing case 3:  Overlap in the same direction and {$\bf{s'<t}$.}} \label{sect 3.4} Let $\calg_1$ be a self-crossing snake graph with self-overlap $i_1(\calg)=\calg_1[s,t]\cong\calg_1[s',t']=i_2(\calg)$ with $s'<t$, and let $\calg_3,\calg_4^\circ,\calg_{56}$ be the resolution of the self-crossing as defined in section~\ref{sect resolution}.
We will define  a map 
\[
\begin{array}
 {rcl}
{\varphi } : \match \calg_1 \cup \match \calg_{56}&\longrightarrow 
& \match \calg_3\times \match \calg_4^\circ 
\\
P&\longmapsto &( \varphi_3({ P}),\varphi_4({ P})).
\end{array}\]
{Recall that in this case $\calg_{56}$ is a negative snake graph in $\mathcal{R}$. This is why $\match \calg_{56}$ is now part of the domain of $\varphi$.}
Let $P_1\in \match \calg_1$. See the top row of Figure~\ref{fig bijection1} for an example.  If the matching $P_1$ contains an edge of $\calg_1[s,2s'-s-1]$ which is an interior edge in $\calg_4^\circ$, then let  $e$ denote the first such edge. Then the snake graph $(\calg_4^\circ)_e$ obtained from $\calg_4^\circ$ by cutting along the edge $e$  is isomorphic to the subsnake graph $\calg_4'\subset \calg_1$ consisting of the last $(s'-s)$ tiles preceding the edge $e$ in $\calg_1$ or the first $(s'-s)$ tiles following $e$ in $\calg_1$, depending whether the edge $e$ comes after or before the tile $G_{s'}$ in $\calg_1$. On the other hand, the band graph $\calg_4^\circ$ can be recovered from $\calg_4'$ by glueing the edge $e$ to an edge $e'$ at the opposite end of $\calg_4'$. Thus $\calg_4^\circ=(\calg_4')^e$. The restriction of $P_1$ to this subgraph $\calg_4'$  induces a perfect matching $\varphi_4(P_1)$ on $\calg_4^\circ=(\calg_4')^e$, since $P_1$ contains the glueing edge $e$. 
Indeed, if the vertices incident to $e'$ are matched in $P_1$ by the edges in $\calg_4'$, then the induced matching $\varphi_4(P_1)$ on $\calg_4^\circ$ is the restriction of $P_1$ to $\calg_4'\setminus\{e\}$ (as in Figure~\ref{fig bijection1}), and if the vertices incident to $e'$ are matched in $P_1$ by edges in $\calg_1\setminus\calg_4'$, then $\varphi_4(P_1)$ is the restriction of $P_1$ to $\calg_4$. 

\begin{figure}
\begin{center}
\scalebox{1}{\small 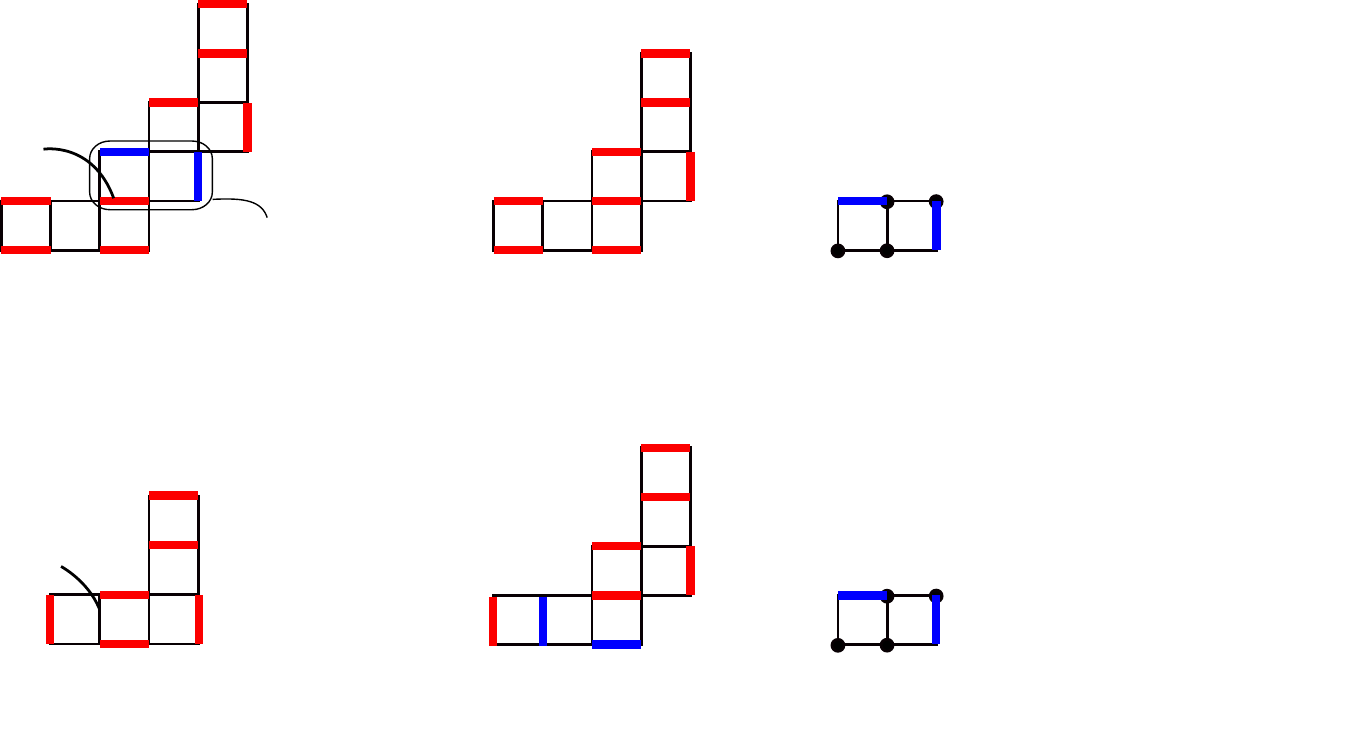}
\caption{The bijection $\varphi$}
\label{fig bijection1}
\end{center}
\end{figure}

On the other hand, if $P_1 $ does not contain an edge from $\calg_1[s,2s'-s-1]$ which is an interior edge in $\calg_4^\circ$, then the first $(s'-s)$ edges in $P_1$ on the set
$\calg_1[s,s'-1]\setminus \{e\}$, where $e$ is the boundary edge of $G_s$ that becomes interior in $\calg_4$, induce a matching $\varphi_4(P_1)$ on $\calg_4^\circ$ consisting of boundary edges only.

In both cases, we have constructed a perfect matching $\varphi_4(P_1)$ on $\calg_4^\circ$, and moreover $\varphi_4(P_1)$ is a subset of $P_1$.
Let $\varphi_3(P_1)= P_1\setminus \varphi_4(P_1)$ be the complement. Then it follows from the construction that $\varphi_3(P_1)$ is a perfect matching of $\calg_3$.

Now let $P_{56}\in \match \calg_{56}$. See the bottom row of Figure~\ref{fig bijection1} for an example. We define a pair $(\varphi_3(P_{56}),\varphi_4(P_{56}))\in \match\calg_3\times\match\calg_4^\circ$.
Suppose without loss of generality that in $\calg_{56}$
the tile $G_{s-1}$ is west of the tile $G_{s'}$, and denote by $a$ the interior edge between these tiles. {Let $x$ be the northern endpoint of $a$, and let $y$ be the southern endpoint}. Let $e(x)\in P_{56}$ (respectively $e(y)\in  P_{56}$) be the unique edge incident to $x$ (respectively $y$). Then there are three cases.

\begin{itemize}
\item [case 1: ] $e(x)$ is the north edge of $G_{s-1}$. This implies that $e(y)$ is an edge of $G_{s-1}$ or $G_{s-2}$. In this case, define $\varphi_3(P_{56})$ to be the matching $P_{56}$ on $\calg_{56}\subset\calg_3$ and completed by boundary edges of $\calg_3$. 
\item [case 2: ] $e(x)=e(y)=a$ is the interior edge shared by $G_{s-1}$ and $G_{s'}$. In this case, define $\varphi_3(P_{56})$ to be the matching $P_{56}$ on $\calg_{56}\subset\calg_3$, where we agree that the edge $a$ is in the tile $G_{s-1}$, and  complete $\varphi_3(P_{56})$  with boundary edges of $\calg_3$.
\item [case 3: ] $e(x)$ is not an edge of $G_{s-1}$. This implies that $e(y)$ is the south edge of $G_{s'}$. In this case, define $\varphi_3(P_{56})$ to be the matching $P_{56}$ on $\calg_{56}\subset\calg_3$ together with the edge $a$ on the tile $G_{s-1}$ and  then completed by boundary edges of $\calg_3$. This is the case shown in Figure~\ref{fig bijection1}.
\end{itemize}
In each case, define $\varphi_4(P_{56})$ to be the unique matching on $\calg_4^\circ$ consisting of those boundary edges that were not used in the completion of $\varphi_3(P_{56})$. In other words, the matchings $\varphi_3(P_{56})\setminus P_{56}$ and $\varphi_4(P_{56})$ are complementary and consist of boundary edges only.

\begin{remark}\label{rem 3.4}
 In each case, the vertex $x$ of the tile $G_{s-1}$ in $\calg_3$ is matched in $\varphi_3(P_{56})$ by an edge in $G_{s-1}$ (namely $e(x)$ in case 1 and the edge $a$ in the other cases). This implies that 
 \begin{enumerate}
\item 
the completed part $\varphi_3(P_{56})\cap\calg_3(s,s'-1)$ is the same in each case, 
\item $\varphi_4(P_{56})$ does not depend on $P_{56}$,
\item the matching $\varphi_4(P_{56})$ does not contain the edge $a$.
\end{enumerate} \end{remark}

%

In order to show that the map $\varphi $ is a bijection, we construct its inverse \[
\begin{array}
 {rcl}
{\psi } : \match \calg_3\times \match \calg_4^\circ &\longrightarrow 
&  \match \calg_1 \cup \match \calg_{56}.
\end{array}\]
Let $(P_3,P_4)
\in \match \calg_3\times \match \calg_4^\circ$. We define $\psi(P_3,P_4)$ by
\[\psi(P_3,P_4)|_{\calg_1\setminus\calg_4'' }= P_3 \quad \textup{and} \quad \psi(P_3,P_4)|_{\calg_4''}=P_4\] 
where  $\calg_4''\subset\calg_1[s,2s'-s-1]$ is the first subgraph such that this definition yields a matching on $\calg_1$ and $\calg_4''=(\calg_4^\circ)_e$ for some interior edge $e$, 
if such a subgraph $\calg_4''$ exists.
If such a subgraph $\calg_4''$ does not exist, we define \[\psi(P_3,P_4) = P_{56}=P_3|_{\calg_{56}}.\]

\begin{lem}\label{lem 3.5}
 The subgraph $\calg_4''$ in the definition above exists if and only if the pair $(P_3,P_4) $ is not of the form $(\varphi_3(P_{56}),\varphi_4(P_{56}))$ with $P_{56}\in \match\calg_{56}$. 
 \end{lem}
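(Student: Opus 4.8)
The plan is to prove the contrapositive in both directions: that $\calg_4''$ fails to exist precisely when $(P_3,P_4)=(\varphi_3(P_{56}),\varphi_4(P_{56}))$ for some $P_{56}\in\match\calg_{56}$. The starting point is to isolate what makes such a pair special. By Remark~\ref{rem 3.4}(2) the matching $Q_4:=\varphi_4(P_{56})$ is independent of $P_{56}$; by Remark~\ref{rem 3.4}(3) it consists of boundary edges of $\calg_4^\circ$ only and does not contain the edge $a$; and by Remark~\ref{rem 3.4}(1) the completed part $\varphi_3(P_{56})\cap\calg_3(s,s'-1)$ is a fixed matching by boundary edges, with the vertex $x$ of $G_{s-1}$ matched inside $G_{s-1}$. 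I would first record the resulting characterization: the pair $(P_3,P_4)$ is special if and only if $P_4=Q_4$, the restriction $P_3|_{\calg_{56}}$ is a perfect matching of $\calg_{56}$, and $P_3$ equals its canonical boundary completion on $\calg_3(s,s'-1)$; in that case one simply sets $P_{56}=P_3|_{\calg_{56}}$ and checks $\varphi_3(P_{56})=P_3$, $\varphi_4(P_{56})=Q_4=P_4$.

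For the direction \emph{``not special $\Rightarrow$ $\calg_4''$ exists''}, I would show that the failure of either defining condition produces a break point. If $P_4\ne Q_4$, then, since $Q_4$ is the boundary-only good matching pinned down by Lemma~\ref{lemNE} (a boundary matching being determined by the sign of a single edge), the matching $P_4$ must use an interior edge of $\calg_4^\circ$; together with the cut edge $e$ supplied by the good-matching condition, this locates the first subgraph $(\calg_4^\circ)_e\subset\calg_1[s,2s'-s-1]$, and transporting $P_4$ there and $P_3$ onto the complement glues to a perfect matching of $\calg_1$, compatibility at the cut being one of the local switching configurations of \cite[Section~3]{CS}. If instead $P_4=Q_4$ but $P_3$ is not the canonical completion, then $P_3$ must use an interior edge of $\calg_3$ on $\calg_3(s,s'-1)$, and this edge is exactly the break point at which $\calg_4''$ can be opened.

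For the converse direction \emph{``special $\Rightarrow$ no $\calg_4''$''}, I would argue that no admissible cut can close up. Fix a candidate interior edge $e$ and the corresponding $(\calg_4^\circ)_e=\calg_4''\subset\calg_1[s,2s'-s-1]$, and attempt to glue $P_4=Q_4$ on $\calg_4''$ to $P_3$ on the complement. Since $Q_4$ contains no interior edge of $\calg_4^\circ$, the two endpoints of the image of $e$ in $\calg_1$ receive no covering edge from inside $\calg_4''$ and would have to be matched from outside by $P_3$. Using Lemma~\ref{lemNE} to control the signs of the boundary edges of $Q_4$ across the whole region $\calg_1[s,2s'-s-1]$, together with the fact that the canonical completion matches $x$ inside $G_{s-1}$ and hence avoids $a$, I would show that the two endpoints of $e$ receive incompatible treatment: a sign clash forces one of them to be left uncovered while the other is doubly covered, so the union is not a perfect matching. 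As this holds for every admissible $e$, the subgraph $\calg_4''$ does not exist.

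The main obstacle is this last ``for all $e$'' non-existence statement: unlike the existence direction, where producing a single good cut suffices, here one must rule out \emph{every} position simultaneously. The cleanest device I expect to need is the complementarity already exploited in Case~1 (the analogue of the Proposition following Theorem~\ref{thm 3.7}): $Q_4$ and the canonical completion $\varphi_3(P_{56})|_{\calg_3(s,s'-1)}$ are complementary boundary matchings, so that the combined edge set covers each interior vertex of the overlap region exactly once and leaves no interior edge free to serve as a gluing edge. Feeding this uniform sign information into Lemma~\ref{lemNE} converts the vertex-by-vertex bookkeeping at each candidate cut into a single parity obstruction, which is what ultimately forbids $\calg_4''$.
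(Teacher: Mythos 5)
There is a genuine gap, and it sits exactly at the crux of the lemma. In your direction ``not special $\Rightarrow$ $\calg_4''$ exists'', the first case rests on the claim that $Q_4=\varphi_4(P_{56})$ is \emph{the} boundary-only good matching of $\calg_4^\circ$, so that $P_4\ne Q_4$ forces $P_4$ to contain an interior edge of $\calg_4^\circ$. This is false. Lemma~\ref{lemNE} does not give uniqueness: its parts (3) and (4) exhibit \emph{two} complementary boundary matchings (the membership of a single edge determines which one you have), and in general both descend to good matchings of the band graph $\calg_4^\circ$ --- one is $Q_4$, which by Remark~\ref{rem 3.4}(3) does not contain the edge $a$, and the other is its complement, which does contain $a$. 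Pairs $(P_3,P_4)$ in which $P_4$ is this complementary boundary matching are not of the form $(\varphi_3(P_{56}),\varphi_4(P_{56}))$, yet your argument produces no cut edge for them; and these pairs genuinely occur, since they are precisely the images $\varphi(P_1)$ of matchings $P_1\in\match\calg_1$ containing no interior edge of $\calg_4^\circ$ in $\calg_1[s,2s'-s-1]$. The paper's proof isolates exactly this situation: after disposing of the interior-edge and common-edge configurations by citing \cite[Section 3]{CS}, it observes that for a complementary boundary-only pair that is not special one must have $a\in P_4$, and then $\calg_4''=\calg_1[s,s'-1]$ itself is the required subgraph. Without this step your bijection would fail to be surjective onto these pairs. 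A secondary inaccuracy of the same kind occurs in your second case: if $P_4=Q_4$ and $P_3$ is not the canonical completion, $P_3$ need not use an interior edge on $\calg_3(s,s'-1)$; it may instead share a boundary edge with $P_4$, which is the common-edge mechanism of \cite{CS}, not the break-point mechanism you describe.

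Your other direction (special $\Rightarrow$ no $\calg_4''$) is in the right spirit and close to the paper's: the paper also uses the fact that $P_4$ and $P_3|_{\calg_1[s,s'-1]}$ are complementary and boundary-only, but it first deduces from this that $\calg_4''$ could only be one of the two windows $\calg_1[s,s'-1]$ or $\calg_1[s',2s'-s-1]$, and then kills both with a single local observation from Remark~\ref{rem 3.4}: the vertex $x$ of $G_{s-1}$ is matched in $P_3$ by an edge of $G_{s-1}$, while the same point, as the corner of $G_s$, is matched in $P_4$ by a boundary edge, so the union double-covers $x$. This two-step reduction avoids the ``for all $e$'' sign bookkeeping that you yourself identify as the main obstacle in your sketch; as written, that part of your argument remains a plausible but unverified outline.
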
 
 
\begin{proof}
 ($\Rightarrow$) Suppose   that $(P_3,P_4) = (\varphi_{3}(P_{56}),\varphi_{4}(P_{56}))$ for some $P_{56}\in \match \calg_{56}$. By definition of $\varphi$, the matchings $P_4$ and $P_3|_{\calg_1[s,s'-1]}$ are complementary and consist of boundary edges only. It follows that $\calg_4''$  could only be  $\calg_1[s,s'-1]$ or $\calg_1[s',2s'-s-1]$. On the other hand, Remark~\ref{rem 3.4} implies that the northeast vertex $x$ of $G_{s-1}$ is matched in $P_3$ by the edges of $G_{s-1}$ and  the northwest vertex of $G_s$, which is also the point $x$, is matched in $P_4$ by a boundary edge. This implies that $  \calg_4''$  cannot be  $\calg_1[s,s'-1]$. A similar argument shows that $\calg_4''$ cannot be $\calg_1[s',2s'-s-1]$ either. It follows that $\calg_4''$ does not exist.
 
 ($\Leftarrow$) Suppose   that the pair $(P_3,P_4) $ {is not of the form $(\varphi_3(P_{56}),\varphi_4(P_{56}))$ with $P_{56}\in \match\calg_{56}$}. It has been shown in \cite[Section 3]{CS} that if one of $P_4$ or $P_3|_{\calg_3\setminus\calg_{56}}$ contains an interior edge of $\calg_4^\circ$, or if both $P_4$ and $P_3|_{\calg_3\setminus\calg_{56}}$ have an edge in common, then $\calg_4''$ always exists.
 
 Thus we only need to consider the pairs $(P_3,P_4) $ where  $P_4$ and $P_3|_{\calg_3\setminus\calg_{56}}$ are complementary and consist of boundary edges only.
 Moreover $(P_3,P_4)$ {is not of the form $(\varphi_3(P_{56}),\varphi_4(P_{56}))$ with $P_{56}\in \match\calg_{56}$}, and from the construction of $\varphi$ and Remark \ref{rem 3.4} (3), it follows that the edge $a$ in the definition of $\varphi$ belongs to $P_4$. Therefore $\calg_4''=\calg_1[s,s'-1]$ satisfies the required properties.
\end{proof}

\begin{thm} {In the case $s'<t$, the map \[\varphi: \match \calg_1 \sqcup \match \calg_{56}\longrightarrow 
 \match \calg_3\times \match \calg_4^\circ 
\]
is a bijection with inverse $\psi$.}\end{thm}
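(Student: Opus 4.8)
The plan is to prove that $\varphi$ and $\psi$ are mutually inverse maps by checking both composites are the identity, organizing everything around Lemma~\ref{lem 3.5}, which splits the codomain $\match\calg_3\times\match\calg_4^\circ$ into the pairs for which the recombination subgraph $\calg_4''$ exists and those for which it does not. First I would note that $\varphi$ is already shown to be well-defined by the construction preceding the theorem: it sends $\match\calg_1$ and $\match\calg_{56}$ into $\match\calg_3\times\match\calg_4^\circ$, with $\varphi_4(\,\cdot\,)$ a good perfect matching of the band graph $\calg_4^\circ$ and $\varphi_3(\,\cdot\,)$ its complement on $\calg_3$. The map $\psi$ is well-defined precisely because Lemma~\ref{lem 3.5} shows its two branches — reassembling $P_3$ and $P_4$ along a subgraph $\calg_4''$, versus restricting to $\calg_{56}$ — are mutually exclusive and jointly exhaustive.

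I would then verify $\psi\circ\varphi=\mathrm{id}$. For $P_1\in\match\calg_1$, the band matching $\varphi_4(P_1)$ is extracted along the \emph{first} interior edge $e$ of $\calg_4^\circ$ carrying a $P_1$-edge (or, failing that, along the initial $(s'-s)$ boundary edges of $\calg_1[s,s'-1]$), so the associated subgraph $\calg_4'=(\calg_4^\circ)_e$ is exactly the first subgraph $\calg_4''$ that $\psi$ selects; recombining $\varphi_3(P_1)$ and $\varphi_4(P_1)$ along it returns $P_1$. For $P_{56}\in\match\calg_{56}$, Lemma~\ref{lem 3.5} guarantees that $\varphi(P_{56})$ admits no such subgraph, so $\psi$ takes its second branch and returns $\varphi_3(P_{56})|_{\calg_{56}}$, which equals $P_{56}$ by the definition of $\varphi_3$ and Remark~\ref{rem 3.4}.

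For $\varphi\circ\psi=\mathrm{id}$ I would argue along the same dichotomy. If $\calg_4''$ exists, then $\psi(P_3,P_4)=P_1\in\match\calg_1$, and, because $\psi$ uses the first valid recombination subgraph while $\varphi$ reads off $\varphi_4$ at the first interior edge, extracting $\varphi(P_1)$ returns $(P_3,P_4)$. If $\calg_4''$ does not exist, Lemma~\ref{lem 3.5} yields $(P_3,P_4)=(\varphi_3(P_{56}),\varphi_4(P_{56}))$ with $P_{56}=P_3|_{\calg_{56}}$; then $\psi(P_3,P_4)=P_{56}$ and $\varphi(P_{56})=(P_3,P_4)$ by definition.

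The step I expect to be the main obstacle is reconciling the two distinct ``first'' conventions: that the first interior edge of $\calg_4^\circ$ bearing a $P_1$-edge produces exactly the first recombination subgraph $\calg_4''\subset\calg_1[s,2s'-s-1]$ sought by $\psi$, so that the two maps really undo one another edge for edge. I would settle this by localizing the switching-position analysis of \cite[Section~3]{CS} to the intersecting block $\calg_1[s,2s'-s-1]$, where the self-overlap forces the relevant cut edge to be unique, and by invoking Remark~\ref{rem 3.4}, which fixes how the vertex $x$ of $G_{s-1}$ is matched in every case and thereby shows the boundary-only completion $\varphi_4(P_{56})$ is independent of $P_{56}$ and never creates a spurious $\calg_4''$.
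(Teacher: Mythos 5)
Your proposal is correct and follows essentially the same route as the paper's proof: both composites are checked via the same dichotomy (existence of the cut edge $e$ / recombination subgraph $\calg_4''$), with Lemma~\ref{lem 3.5} handling the $\match\calg_{56}$ branch and the identification $\calg_4''=\calg_4'$ handling the other. The paper asserts that identification ``by construction,'' whereas you rightly flag it as the point needing the switching-position analysis from \cite[Section 3]{CS} and Remark~\ref{rem 3.4} — a more careful accounting of the same step, not a different argument.
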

\begin{proof} Let $P_1\in \match \calg_1$. If the interior edge $e$ in the definition of $\varphi$ exists, we have
$\psi\varphi(P_1) =\psi(\varphi_3(P_1),\varphi_4(P_1)) =P_1$, because in this case the subgraph $\calg_4''$ in the definition of $\psi$ is equal to the subgraph $\calg_4'$ in the definition of $\varphi$.
On the other hand, if no such $e$ exists then $\calg_4''$ is the subgraph 
$\calg_1[s,s'-1]\setminus \{e\}$, where $e$ is the boundary edge of $G_s$ that becomes interior in $\calg_4$, and again $\psi\varphi(P_1) =\psi(\varphi_3(P_1),\varphi_4(P_1)) =P_1$.

Next, let $P_{56}\in \match\calg_{56}$. Then 
$\psi\varphi(P_{56}) =\psi(\varphi_3(P_{56}),\varphi_4(P_{56})) = \varphi_3(P_{56})|_{\calg_{56}} = P_{56} $
where the second identity holds because of Lemma \ref{lem 3.5} and the last identity by definition of $\varphi_3$.

This shows that $\psi\varphi$ is the identity.

Now let $(P_3,P_4)\in\match\calg_3\times\calg_4$. If $\calg_4''$ in the definition of $\psi$ exists then 
$\varphi \psi (P_3,P_4) =\varphi (P_3\cup P_4) = (P_3,P_4)$, by construction. On the other hand, if $\calg_4''$ does not exist, then Lemma~\ref{lem 3.5} implies that there exists $P_{56}\in \match \calg_{56}$ such that 
$(P_3,P_4)= \varphi(P_{56})$. In this case $\psi (P_3,P_4)= \varphi_3(P_{56})|_{\calg_{56}}=P_{56}$, and thus $\varphi \psi (P_3,P_4) =\varphi(P_{56})= (P_3,P_4)$.
 This shows that $\varphi\psi$ is the identity.
\end{proof}

\subsection{Self-crossing case 4:  Overlap in the same direction and {$\bf{s'=t+1}$}} \label{sect s'=t+1} The bijections $\varphi, \psi$ are almost exactly as in the case $s'>t+1$ except for one particular type of matchings which we describe now.
Let $k$ be as in section \ref{sect resolution} case (3){(c)}. Recall that in this case there is a second overlap $j_1(\calh)={\ocalg_1[s-1,k+1]}$  and $j_2(\calh)=\calg_1[t'+1,t'+s-k-1] $ just before and after the overlap $i_1(\calg)=\calg_1[s,t]$ and $i_2(\calg)=\calg_1[s',t']$,
which determines the sign of $\calg_{56}$ in the resolution of the self-crossing.

  We consider $\calg_1$ as a union of 6 subgraphs
\[\calg_1=\calg_1[1,k]\cup\calg_1[k+1,s-1]\cup\calg_1[s,t]\cup\calg_1[s',t']\cup\calg_1[t'+1,t'+s-k-1]\cup\calg_1[t'+s-k, d].\]

Recall that $\calg_{56}=\calg_1[1,k]\cup\calg_1[t'+s-k,d]$ glued along the edge $e$ which is the boundary edge in $G_k^{N\!E}$ and the edge $e'$
which is the boundary edge in $_{SW}G_{t'+s-k}$. 
Suppose $P_1\in\match\calg_1 $ is such that $e_t\in P_1$, $P_1|_{\calg_1({k+1},t)}$ consists of boundary edges only, $P_1|_{\calg_1(s',{t'+s-k-1})}$ consists of boundary edges only, and is complementary on both pieces.

Without loss of generality, suppose $e_t$ is {north} of $G_t$ and $f(e_t)=-$. Then $f(e_{s-1})=+=f(e_{t'})$, {because we have a crossing overlap. Moreover, Lemma~\ref{lemNE} implies that} the east and the north edges in $P_1|_{\calg_1(k+1,t)}$ have sign $-$
(thus  the east and the north edges in $P_1|_{\calg_1(s',t'+s-k-1)}$ have sign $+$)
and the south and the west edges in $P_1|_{\calg_1(k+1,t)}$ have sign $+$ (thus  the south and the {west} edges in $P_1|_{\calg_1(s',t'+s-k-1)}$ have sign $-$).
Moreover, {since the overlaps $j_1(\calh), j_2(\calh)$ have opposite direction, we have} $f(e_{s-\ell-1})=-f(e_{t'+\ell})$ for $\ell=1,2,\ldots,s-k-2$,
$f(e_k)=f(e_{t'+s-k-1})$ and $f(e)=f(e')$.

There are two cases: the overlaps $j_1({\calh})$ and $j_2({\calh})$ cross or not. Let us suppose first that they do not cross.
Then $f(e_k)=f(e_{s-1})=f(e_{t'+s-k+1})=f(e_{t'})=+$, because it is a non-crossing overlap in the opposite direction. 
Then $f(e)={-f(e_k)=  -}$.

If $G_{k+1}$ is east of $G_k$ then $e$ is the north edge of $G_k$ and this shows that  the south edge  of $G_{k+1}$ is not in $P_1$ because it has sign $-$; and 
if $G_{k+1}$ is north of $G_k$ then {$e$ is the east edge of $G_k$ and this shows that} the west edge  of $G_{k+1}$ is not in $P_1$ because it has sign  $-$.
Therefore the two vertices of {the interior edge $e_k$ shared by the tiles $G_k$ and $G_{k+1}$} are matched by edges from $G_k$ or $G_{k-1}$. Thus either $e_k\in P_1 $ or $e\in P_1$ and similarly, either  $e_{t'+s-k-1}\in P_1 $ or $e'\in P_1$.  
 We define 
 \[ \varphi(P_1)=P_1|_{\calg_{56}}\setminus \{e \}\textup{ if $e\in P_1$ or $e,e'\in P_1$;} \]
 and 
 \[\varphi(P_1)=P_1|_{\calg_{56}}\setminus \{e' \}\textup{ if $e'\in P_1$ and $e\notin P_1$.}\]
 
The only remaining case is $e_k\in P_1$ and $e_{t'+s-k-1}\in P_1$, see Figure \ref{funnymatching}. This is the special  case, and we denote {the restriction $P_1|_{\calg_1[k+1,t'+s-k-1]}$} by $P_{\tiny\smiley}$. {See Figure~\ref{funnymatching} for an example where $k=1$ and $j_1(\calh)$ is the second tile.}

\begin{figure}
\begin{center}
\scalebox{1}{\small 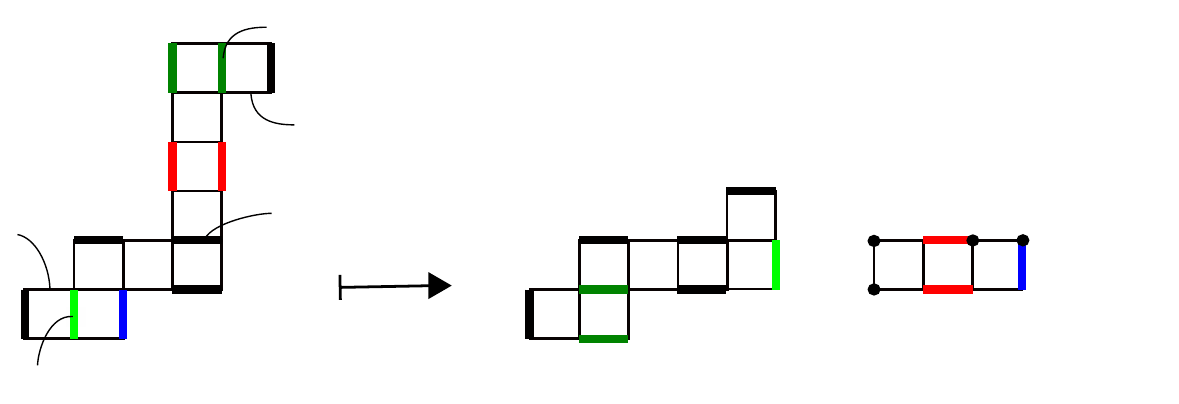}
\caption{The matching $P_{\tiny\smiley}$ with $k=1$, $j_1(\calh)=G_2$, $j_2(\calh)=G_{t'+1}$}
\label{funnymatching}
\end{center}
\end{figure}

 In this case, we define
 $\varphi(P_{\tiny\smiley}) $ as follows
 \[
\begin{array}{lll}
 \textup{on $ \calg_4^\circ$ use $P_{\tiny\smiley}|_{\calg_1(s',t')}\cup \{$boundary edge of $G_{s-1}^{N\!E}\}$} \\
 \textup{on $ \calg_3[1,k)$ use $P_{\tiny\smiley}|_{\calg_1[1,k)}$}\\ 
 \textup{on $ \calg_3[k+1,s-1]$ use $P_{\tiny\smiley}|_{j_2(\calh)}$}\\
 \textup{on $ \calg_3(s,t]$ use $P_{\tiny\smiley}|_{\calg_1(s,t]}$}\\  
 \textup{on the part of $\calg_3$ coming from $  \calg_1(t'+1,t'+s-k-1]$ use $P_{\tiny\smiley}|_{\calg_1[k,s-1)}$}\\  \textup{on the part of $\calg_3$ coming from $ \calg_1(t'+s-k,d]$ use $P_{\tiny\smiley}|_{\calg_1(t'+s-k,d]}$.}
 \end{array}
 \]
 
Now we define the inverse map for this special case.
Let $(P_3,P_4)\in \match\calg_3\times\match\calg_4^\circ$ be the image of the special matching $P_{\tiny\smiley}$ of $\calg_1$ as described above. In this case $P_4$ consists of boundary edges 
and $P_3$ consist of boundary edges on the part of $\calg_3$ given by $
i_3(\calg)\cup j_4(\calh)$ 
(this is the part coming from $
\calg_1[s,t]\cup \calg_1[t'+1,t'+s-k-1]$),
and
$P_3$ and $P_4$ are complementary on $i_3(\calg)$ and $\calg_4^\circ$.
Let $\psi(P_3,P_4)$ be the matching $P_{\tiny\smiley}$ on $\calg_1[k+1,t'+s-k-1]$ and on $\calg_1[1,k)$ and $\calg_1(t'+s-k,d]$ use the restriction of $\calg_3$ to the corresponding subgraphs of $\calg_3$.

Finally suppose that $j_1(\calh)$ and $j_2(\calh)$ are crossing overlaps. Then we define $\varphi(P_1)$ in exactly the same way as above. In this case, $f(e_k)=-f(e_{s-1})=-f(e_{t'})=f(e_{t'+s-k+1})$. 
If $P_1\in\match\calg_1 $ satisfies the condition above, that is $P_1$ is such that $e_t\in P_1$, $P_1|_{\calg_1(k,t)}$ consists of boundary edges only, $P_1|_{\calg_1(s',t'+s-k)}$ consists of boundary edges only, and is complementary on both pieces, 
then the glueing edge $e$ is  not in $P_1$. Thus $\varphi(P_1)$ is always a matching of $\calg_3\sqcup\calg_4^\circ$.
On the other hand, if $P_{56}\in\match\calg_{56}$ let $\varphi(P_{56})$ be the unique completion to a matching of $\calg_3$ using only boundary edges, together with the unique complementary matching on $\calg_4^\circ$ using only boundary edges. {This completes the case $s'=t+1.$ }
 
\subsection{Self-grafting}
Let $\calg_1=(G_1,\ldots,G_d)$ be a snake graph and let  $ \graft_{s,\e_3} (\calg_1)=(\calg_3 \sqcup \calg_4^\circ ) + \calg_{56}\in \mathcal{R}$
   be the {resolution of the self-grafting} of $\calg_1$  in $G_s$ as defined in section \ref{self-grafting}.

\begin{thm} \label{thm 3.11} There is a bijection 
\[\varphi:\match\calg_1\to{ \match(\graft_{s,\delta_3}(\calg_1))}.
\] \end{thm}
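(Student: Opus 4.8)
The plan is to construct $\varphi$ together with an explicit inverse $\psi$ and to check that they are mutually inverse, treating the two cases $1\le s<d$ (Case~1) and $s=d$ (Case~2) separately. The whole construction parallels the self-crossing bijections of Theorem~\ref{thm 3.7} and of the opposite-direction theorem: the single snake graph $\calg_1$ sits on one side, while on the other side we have the pair $(\calg_3,\calg_4^\circ)$ together with the auxiliary graph $\calg_{56}$, so that the target of $\varphi$ is $\match\calg_3\times\match\calg_4^\circ\,\cup\,\match\calg_{56}$. Since $\calg_{56}$ carries no negative sign in the self-grafting resolution, the domain is simply $\match\calg_1$, exactly as in the case $s'>t+1$. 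The genuinely new feature, shared with the same-direction self-crossing cases, is that $\calg_4^\circ$ is a \emph{band} graph, so $\varphi$ must land in its set of \emph{good} perfect matchings.

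First I would define $\varphi$ on $\match\calg_1$. Given $P_1$, I examine its behaviour at the grafting bend between $G_s$ and $G_{s+1}$, that is, at the edges $\delta_3$ and $\delta_5$ (recall $f(\delta_3)=f(\delta_5)$, and the auxiliary boundary edges satisfy $f(\delta_3')=f(\delta_3)$, $f(\delta_5')=-f(\delta_3)$). The dichotomy is whether the restriction $P_1|_{\calg_1[1,s]}$ closes up, under the identification $\delta_3\sim\delta_3'$, to a good perfect matching of $\calg_4^\circ$. When it does, I set $\varphi(P_1)=(P_3,P_4)$, where $P_4$ is the induced good matching on $\calg_4^\circ=(\calg_1[1,s])^{\delta_3}$ and $P_3$ is the complementary matching on the terminal graph $\calg_3=\calg_1\setminus\pred(e)$; when it does not, I fold the reversed initial segment onto the terminal part along $\delta_5\sim\delta_5'$ and set $\varphi(P_1)=P_{56}=P_1|_{\calg_{56}}$, extended by boundary edges. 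As in Sections~\ref{sect 3.3}--\ref{sect s'=t+1}, the role of Lemma~\ref{lemNE} is to force these boundary-edge completions to be unique and sign-consistent.

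Next I would construct $\psi$ and verify $\psi\varphi=\mathrm{id}$ and $\varphi\psi=\mathrm{id}$. The delicate point is the band graph $\calg_4^\circ$: to show that the $P_4$ produced above is a good perfect matching, and conversely that every good matching is hit exactly once, I would cut $\calg_4^\circ$ along a suitable interior edge, thereby recovering a snake graph, and reduce to the grafting bijection of Theorem~\ref{bijections}(2) and the matching results of \cite{CS}, mimicking the cutting device used in the proof of Lemma~\ref{lem opp}. In Case~2 ($s=d$), where $\calg_3=\{\delta_3\}$ is a single edge and $\calg_{56}$ may equal $0$, the argument degenerates pleasantly: the claim reduces to the statement that good matchings of the band graph $\calg_1^{\delta_3}$ containing the glue edge correspond to matchings of $\calg_1$, while the remaining matchings are exactly those collected by $\calg_{56}$ --- this is precisely the interpretation noted in the remark after the definition, that self-grafting measures the difference between a band graph and its underlying snake graph.

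The main obstacle I anticipate is controlling the good-matching condition for $\calg_4^\circ$ and establishing the clean dichotomy, an analog of Lemma~\ref{lem 3.5}, that a pair $(P_3,P_4)$ comes from a matching of $\calg_1$ exactly when it does \emph{not} come from a matching of $\calg_{56}$. Proving this requires showing that the boundary-edge completions are forced and complementary, and this is where Lemma~\ref{lemNE} together with careful sign bookkeeping at $\delta_3,\delta_5,\delta_3',\delta_5'$ (using $f(\delta_3)=f(\delta_5)$ and $f(\delta_5')=-f(\delta_3)$) does the real work. Once this dichotomy is in place, the verification that $\varphi$ and $\psi$ invert each other is formal, following the template of the proof of Theorem~\ref{thm 3.7}.
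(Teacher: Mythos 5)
Your proposal follows essentially the same route as the paper: the paper's entire proof consists of the observation that $\varphi$ is given by the very same local operation as in the grafting case of Theorem~\ref{bijections}(2), described explicitly in \cite[Figure 11]{CS}, and your construction (the dichotomy at the grafting bend, the inverse $\psi$, and cutting $\calg_4^\circ$ along an interior edge to reduce to the two-snake-graph grafting bijection) is precisely an unpacking of that cited operation. One caution: your ``closes up'' criterion must be read as the descent operation of \cite{CS} --- delete a glue edge $\delta_3$ or $\delta_3'$ when the restriction contains one, merge them when it contains both, and only then test for a good perfect matching --- rather than as literal closure of the restricted edge set under the identification $\delta_3\sim\delta_3'$, since the latter almost never produces a matching (identified vertices get covered twice); with that reading your dichotomy agrees with \cite[Figure 11]{CS}, which is what the paper invokes.
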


\begin{proof}
 The bijection $\varphi$ is the same as in the grafting case in Theorem \ref{bijections} (2). It is  given explicitly by the operation described in \cite[Figure 11]{CS}.
\end{proof}

\section{Labeled snake and band graphs arising from  cluster algebras of unpunctured surfaces}\label{sect 4}

In this section we recall how snake graphs {and band graphs} arise naturally in the theory of cluster algebras. We follow the exposition in \cite{MSW2}.
%

\subsection{Cluster algebras  from unpunctured
    surfaces}\label{sect surfaces} 
%

Let $S$ be a connected oriented 2-dimen\-sional Riemann surface with
nonempty
boundary, and let $M$ be a nonempty finite subset of the boundary of $S$, such that each boundary component of $S$ contains at least one point of $M$. The elements of $M$ are called {\it marked points}. The
pair $(S,M)$ is called a \emph{bordered surface with marked points}.
 
For technical reasons, we require that $(S,M)$ is not
a disk with 1,2 or 3 marked points.

\begin{defn}   \label{gen-arc}
A \emph{generalized arc}  in $(S,M)$ is a curve $\gamma$ in $S$, considered up
to isotopy,  such that:
\begin{itemize}
\item[(a)] the endpoints of $\gamma$ are in $M$; 
\item[(b)] except for the endpoints,
$\gamma$ is disjoint  from the boundary of $S$; and
\item[(c)]
$\gamma$ does not cut out a monogon or a bigon.
\end{itemize}

A  generalized arc $\zg$ is called an \emph{arc} if in  addition $\zg$ does not cross itself, except that its endpoints may coincide;

\end{defn}
%

Thus a generalized arc is allowed to cross itself a finite 
number of times.

Curves that connect two
marked points and lie entirely on the boundary of $S$ without passing
through a third marked point are \emph{boundary segments}.
Note that boundary segments are not   arcs.

For any two arcs $\zg,\zg'$ in $S$, let $e(\zg,\zg')$ be the minimal
number of crossings of 
arcs $\za$ and $\za'$, where $\za$ 
and $\za'$ range over all arcs isotopic to 
$\zg$ and $\zg'$, respectively.
We say that arcs $\zg$ and $\zg'$ are  \emph{compatible} if $e(\zg,\zg')=0$. 

A \emph{triangulation} is a maximal collection of
pairwise compatible arcs (together with all boundary segments). 

Triangulations are connected to each other by sequences of 
{\it flips}.  Each flip replaces a single arc $\gamma$ 
in a triangulation $T$ by a (unique) arc $\gamma' \neq \gamma$
that, together with the remaining arcs in $T$, forms a new 
triangulation.

\begin{defn}
Choose any   triangulation
$T$ of $(S,M)$, and let $\tau_1,\tau_2,\ldots,\tau_n$ be the $n$ arcs of
$T$.
For any triangle $\Delta$ in $T$, we define a matrix 
$B^\Delta=(b^\Delta_{ij})_{1\le i\le n, 1\le j\le n}$  as follows.
\begin{itemize}
\item $b_{ij}^\Delta=1$ and $b_{ji}^{\Delta}=-1$ if $\tau_i$ and $\tau_j$ are sides of 
  $\Delta$ with  $\tau_j$ following $\tau_i$  in the 
  clockwise order.
\item $b_{ij}^\Delta=0$ otherwise.
\end{itemize}
 
Then define the matrix 
$ B_{T}=(b_{ij})_{1\le i\le n, 1\le j\le n}$  by
$b_{ij}=\sum_\Delta b_{ij}^\Delta$, where the sum is taken over all
triangles in $T$.
\end{defn}

Note that  $B_{T}$ is skew-symmetric and each entry  $b_{ij}$ is either
$0,\pm 1$, or $\pm 2$, since every arc $\tau$ is in at most two triangles.

\begin{thm} \cite[Theorem 7.11]{FST} and \cite[Theorem 5.1]{FT}
\label{clust-surface}
Fix a bordered surface $(S,M)$ and let $\Acal$ be the cluster algebra associated to
the signed adjacency matrix of a   triangulation. Then the (unlabeled) seeds $\Sigma_{T}$ of $\Acal$ are in bijection
with  the triangulations $T$ of $(S,M)$, and
the cluster variables are  in bijection
with the arcs of $(S,M)$ (so we can denote each by
$x_{\gamma}$, where $\gamma$ is an arc). Moreover, each seed in $\Acal$ is uniquely determined by its cluster.  Furthermore,
if a   triangulation $T'$ is obtained from another
  triangulation $T$ by flipping an arc $\gamma\in T$
and obtaining $\gamma'$,
then $\Sigma_{T'}$ is obtained from $\Sigma_{T}$ by the seed mutation
replacing $x_{\gamma}$ by $x_{\gamma'}$.
\end{thm}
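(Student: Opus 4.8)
The plan is to reduce the statement to three ingredients: a local combinatorial identity, a connectivity fact from surface topology, and a global comparison of two simplicial complexes. Fix an initial triangulation $T_0$ of $(S,M)$ with arcs $\tau_1,\dots,\tau_n$, and attach to it the seed whose cluster is $\{x_{\tau_1},\dots,x_{\tau_n}\}$ and whose exchange matrix is the signed adjacency matrix $B_{T_0}$. The heart of the argument is the local lemma: if $T'$ is obtained from $T$ by flipping the arc $\tau_k$ to a new arc $\gamma'$, then $B_{T'}=\mu_k(B_T)$, where $\mu_k$ denotes matrix mutation in direction $k$. I would prove this by a direct case analysis. The flip takes place inside the quadrilateral bounded by the two triangles adjacent to $\tau_k$, so only the entries $b_{ij}$ with $\tau_i$ or $\tau_j$ on the boundary of this quadrilateral can change. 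Comparing the signed adjacencies before and after the flip against the mutation rule
\[ b'_{ij}=-b_{ij} \ \text{ if } i=k \text{ or } j=k, \qquad b'_{ij}=b_{ij}+\tfrac12\bigl(|b_{ik}|b_{kj}+b_{ik}|b_{kj}|\bigr) \ \text{ otherwise,}\]
reduces to finitely many configurations, distinguished by which arcs lie on the sides of the quadrilateral and whether any arc is repeated.

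With the local lemma in hand, I would next invoke the classical topological fact that the flip graph of ideal triangulations of $(S,M)$ is connected: any two triangulations are joined by a finite sequence of flips. Following such a sequence from $T_0$ and applying the corresponding sequence of mutations produces, for each triangulation $T$, a labeled seed $\Sigma_T$, and by connectivity every triangulation is reached this way. The flip--mutation correspondence then immediately yields the final assertion of the theorem, that flipping $\tau_k$ corresponds to mutating in direction $k$; one also checks that the exchange relation produced by this mutation coincides with the Ptolemy-type relation satisfied by the crossing arcs $\tau_k$ and $\gamma'$, so the mutated cluster variable agrees with the variable $x_{\gamma'}$ assigned to the flipped arc.

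The main obstacle is to show that the resulting maps $T\mapsto \Sigma_T$ and $\gamma\mapsto x_\gamma$ are well defined and bijective: distinct arcs must give distinct cluster variables, distinct triangulations distinct clusters, and every seed must come from a triangulation. Surjectivity is straightforward, since the exchange graph of $\Acal$ is connected and the flip--mutation correspondence shows every neighbour of a seed coming from a triangulation again comes from a triangulation. The difficulty is injectivity, equivalently the absence of any identifications in the flip graph beyond those forced by the cluster structure. I would organize this as a comparison between the arc complex of $(S,M)$, whose maximal simplices are the triangulations, and the cluster complex of $\Acal$, whose maximal simplices are the clusters. By the structure theory of cluster algebras, the exchange graph and cluster complex are determined by the local mutation data together with the fundamental relations among mutations, namely commutation of mutations in disjoint directions and the pentagon relation for adjacent ones. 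I would verify that exactly these relations hold among flips: commutation is clear when two flips occur in disjoint quadrilaterals, and the pentagon relation is the classical pentagon of triangulations of a pentagon. Checking that no further relations are imposed then identifies the two complexes, yields the bijections, and, via injectivity of $T\mapsto\{x_\tau\}$ on clusters, gives the assertion that each seed is determined by its cluster; one finally passes from labeled to unlabeled seeds.

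I expect the hardest point to be this last identification of complexes, and in particular ruling out extra relations in the flip graph. This is precisely where once-punctured surfaces cause trouble in the general theory and force the introduction of tagged arcs; in the unpunctured setting at hand these pathologies do not occur, so the arc complex can be matched directly with the cluster complex without modification.
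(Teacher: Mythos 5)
First, a point of order: the paper does not prove this statement at all --- it is quoted from \cite[Theorem 7.11]{FST} and \cite[Theorem 5.1]{FT} --- so your proposal has to be measured against the proofs in those references. Your first two ingredients are sound and are indeed the opening moves of the actual argument: the local lemma that a flip of $\tau_k$ transforms signed adjacency matrices by the matrix mutation $\mu_k$ (and in the unpunctured setting there are no self-folded triangles, so the finite case analysis is legitimate), and the classical connectivity of the flip graph. Your surjectivity argument is also correct: every arc of a triangulation of an unpunctured surface is flippable, so the set of seeds attached to triangulations is closed under mutation and, by connectedness of the exchange graph, exhausts all seeds.

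The genuine gap is in your final step. You propose to obtain well-definedness of $T\mapsto\Sigma_T$ and injectivity by invoking ``the structure theory of cluster algebras,'' according to which the exchange graph (equivalently the cluster complex) is presented by the commutation and pentagon relations among mutations, and then checking that flips satisfy exactly these relations. No such presentation theorem exists: the assertion that every cycle in the exchange graph is generated by $4$-cycles (commuting mutations) and $5$-cycles (pentagon) is a conjecture of Fomin and Zelevinsky, not an available structural fact, and even the statement ``each seed is determined by its cluster'' is in general conjectural --- for surface cluster algebras it is part of the content of the very theorem you are trying to prove. So your argument is circular exactly where the difficulty is concentrated: ruling out identifications between seeds coming from distinct triangulations, equivalently showing that distinct arcs produce distinct cluster variables. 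The cited proofs close this gap with tools your outline does not contain: Fomin--Thurston \cite{FT} construct a realization of the cluster structure on decorated Teichm\"uller space in which the cluster variable of an arc is its lambda length, and distinct arcs give distinct functions, which yields injectivity and the seed-determined-by-cluster property; Fomin--Shapiro--Thurston \cite{FST} instead combine the flip--mutation correspondence with properties of exchange graphs due to Gekhtman, Shapiro and Vainshtein. Some external separating input of this kind --- lambda lengths, or denominator/crossing-number vectors distinguishing arcs --- is unavoidable; comparing abstract presentations of the arc complex and the cluster complex cannot do the job, because the required presentation of the cluster complex is not known.
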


{From now on suppose that $\cala$ has principal coefficients in the initial seed $\zS_T=(\mathbf{x}_T,\mathbf{y}_T,B_T)$.}


\begin{defn}
A \emph{closed loop} in $(S,M)$ is a closed curve
$\gamma$ in $S$ which is disjoint from the 
boundary of $S$.  We allow a closed loop to have a finite
number of self-crossings.
As in Definition~\ref{gen-arc}, we consider closed
loops up to isotopy.
%
A closed loop in $(S,M)$ is called \emph{essential} if
  it is not contractible
and it does not have self-crossings.
\end{defn}

\begin{defn} 
A \emph{multicurve} is  a finite multiset of generalized  
arcs and closed loops such that there are only a finite number of pairwise crossings among the collection.
We say that a multicurve is \emph{simple}
if there are no pairwise crossings among the collection and 
no self-crossings.
\end{defn}

If a multicurve is not simple, 
then there are two ways to \emph{resolve} a crossing to obtain a multicurve that no longer contains this crossing and has no additional crossings.  This process is known as \emph{smoothing}.

\begin{defn}\label{def:smoothing} (Smoothing) Let $\gamma, \gamma_1$ and $\gamma_2$ be generalized  
arcs or closed loops such that we have
one of the following two cases:

\begin{enumerate}
 \item $\gamma_1$ crosses $\gamma_2$ at a point $x$,
  \item $\gamma$ has a self-crossing at a point $x$.
\end{enumerate}

\noindent Then we let $C$ be the multicurve $\{\gamma_1,\gamma_2\}$ or $\{\gamma\}$ depending on which of the two cases we are in.  We define the \emph{smoothing of $C$ at the point $x$} to be the pair of multicurves $C_+ = \{\alpha_1,\alpha_2\}$ (resp. $\{\alpha\}$) and $C_- = \{\beta_1,\beta_2\}$ (resp. $\{\beta\}$).

Here, the multicurve $C_+$ (resp. $C_-$) is the same as $C$ except for the local change that replaces the crossing {\Large $\times$} with the pair of segments 
  $\genfrac{}{}{0pt}{5pt}{\displaystyle\smile}{\displaystyle\frown}$ (resp. {$\supset \subset$}).    
\end{defn}   

Since a multicurve  may contain only a finite number of crossings, by repeatedly applying smoothings, we can associate to any multicurve  a collection of simple multicurves.  
  We call this resulting multiset of multicurves the \emph{smooth resolution} of the multicurve $C$.

{\begin{remark}\label{rem smoothing}
 This smoothing operation can be rather complicated, since the multicurves are considered up to isotopy. Thus after performing the local operation of smoothing described above, one needs to find representatives of the isotopy classes of $C_+$ and $C_-$ which have a minimal number of crossings with the triangulation. In practice, this can be quite difficult especially if one needs to smooth several crossings. This difficulty was one of the original motivations to develop the snake graph calculus. The isotopy is already contained in the definition of the resolutions of the (self-)crossing snake graphs. 
\end{remark}}

\begin{thm}\label{th:skein1}(Skein relations) \cite[Propositions 6.4,6.5,6.6]{MW}
Let $C,$ $ C_{+}$, and $C_{-}$ be as in Definition \ref{def:smoothing}. 
Then we have the following identity in $\cala$,
\begin{equation*}
x_C = \pm Y_1 x_{C_+} \pm Y_2 x_{C_-},
\end{equation*}
where $Y_1$ and $Y_2$ are monomials in the variables $y_{\tau_i}$.    
The monomials $Y_1$ and $Y_2$ can be expressed using the intersection numbers of the elementary laminations (associated to triangulation $T$) with  
the curves in $C,C_+$ and $C_-$.
\end{thm}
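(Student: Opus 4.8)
The plan is to deduce the skein relation from the combinatorial resolution of (self-)crossings together with the matching bijections established above, rather than from the geometric smoothing directly. First I would reduce to the two local situations of Definition~\ref{def:smoothing}: a transversal crossing of two curves $\gamma_1,\gamma_2$, and a self-crossing of a single curve $\gamma$. In each case the expansion formulas of \cite{MSW,MSW2} express the associated Laurent polynomials as weighted sums over perfect matchings: writing $\calg$ for the snake graph (or, for a closed loop, the band graph, in which case $\match$ means good perfect matchings) of the relevant curve, one has $x_C=\frac{1}{\cross}\sum_{P\in\match\calg}x(P)\,y(P)$, where $x(P)$ is the product of the edge labels of $P$, the factor $y(P)$ is the principal-coefficient monomial recorded by the height function of $P$, and $\cross$ is the product of the cluster variables of the crossed arcs. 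A self-crossing curve produces a self-crossing snake graph in the sense of Definition~\ref{def self-crossing}, so this formalism applies verbatim, and for a multicurve $x_C$ is the product of these per-component expansions.

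The second step is the translation from geometry to combinatorics: I would show that the smoothing of the crossing at $x$ corresponds exactly to the resolution of the (self-)crossing of $\calg$ in the overlap where the two strands meet. Concretely, the strands crossing at $x$ give a crossing overlap (Definition~\ref{crossing}) or a self-crossing overlap (Definition~\ref{def self-crossing}), and the two summands of the resolution---$(\calg_3\sqcup\calg_4^{\circ})+\calg_{56}$ in the same-direction self-crossing case, and the analogous expressions of Definitions~\ref{resolution},~\ref{resolution self-crossing},~\ref{resolution self-crossing in opposite direction}---are precisely the snake and band graphs of $C_+$ and $C_-$, up to the sign recording whether a piece enters $\mathcal{R}$ negatively. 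This is the geometric content carried by Theorem~\ref{thm cross}; under it the disjoint union $\sqcup$ in $\mathcal{R}$ becomes a product of Laurent polynomials (so that a summand $\calg_3\sqcup\calg_4^\circ$ yields $x_{\calg_3}$ times the loop variable of $\calg_4^\circ$) and the formal sum $+$ becomes the sum on the right-hand side of the claimed identity.

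With the resolution identified, I would apply the matching bijections directly. For the two-curve case this is Theorem~\ref{bijections}, and for the self-crossing case it is Theorem~\ref{thm 3.7} together with its counterparts for the opposite-direction and intersecting self-overlaps treated in Section~\ref{sect 3}, and the self-grafting bijection Theorem~\ref{thm 3.11}. These give a bijection $\match\calg\longleftrightarrow\match\bigl(\res_{\calg}(\cdots)\bigr)$, which partitions the matchings contributing to $x_C$ into those landing on the $\match\calg_3\times\match\calg_4^{\circ}$ side (the $C_+$ term) and those landing on the $\match\calg_{56}$ side (the $C_-$ term). At the level of index sets this already matches the shape $x_C=\pm Y_1 x_{C_+}\pm Y_2 x_{C_-}$.

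The main obstacle, and the real content, is showing that these bijections are weight-preserving up to the global monomials $Y_1,Y_2$. Since the bijections reuse the actual edges of each matching, the product of edge labels $x(P)$ is preserved except for the edges along which the resolution cuts or glues (the diagonals crossed at the smoothing point) and the change in the $\cross$ denominators; these contribute a fixed Laurent factor independent of $P$. The delicate part is the height contribution $y(P)$: I would have to check that, for every $P$ in a given block of the partition, the ratio between the height monomial of $P$ and that of its image is a single monomial $Y_1$ (respectively $Y_2$) independent of $P$, and then identify this monomial with the elementary-lamination intersection numbers of the curves in $C,C_+,C_-$. This amounts to controlling the minimal matchings and the induced height functions under the local surgery, and simultaneously pinning down the two signs, which encode the direction of the overlap and the negativity of $\calg_{56}$ in $\mathcal{R}$. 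Once this uniform weight comparison is in place, summing the weight-preserving bijection over all matchings yields the identity $x_C=\pm Y_1 x_{C_+}\pm Y_2 x_{C_-}$ in $\cala$, completing the proof.
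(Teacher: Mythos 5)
Your route is essentially the paper's own: the paper does not prove Theorem \ref{th:skein1} where it is stated (it is quoted from \cite{MW}), but its new snake-graph proof of the skein relations is exactly the chain you describe --- translate crossings and self-crossings of curves into crossings of labeled snake graphs (Theorem \ref{thm cross}), identify smoothing with resolution (Theorems \ref{smoothing1}, \ref{thm430}, \ref{smoothing2}, \ref{smoothingselfgrafting}), and then promote the matching bijections of section \ref{sect 3} to Laurent-polynomial identities (Theorems \ref{laurent}, \ref{laurent2}, \ref{laurent2self}), arriving at Corollary \ref{skein}. The step you flag as ``the main obstacle'' --- that the bijections are weight-preserving up to a single monomial independent of the matching --- is indeed where the paper's real work lies: it is carried out in Lemma \ref{lem 72}, in the case analysis of the proof of Theorem \ref{laurent} (including the special matching of section \ref{sect s'=t+1}), and the coefficients come out as the explicit monomials $y(\tcalg_{max})$, $y_{34}$, $y_{56}$ of (\ref{laurent12}), (\ref{laurentself}), (\ref{skeincoeff}), (\ref{eq722a}), (\ref{eq722b}).

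There is, however, a genuine gap in the scope you claim. Theorem \ref{th:skein1} allows the multicurve $C$ to contain closed loops, and even when $C$ consists of arcs, iterated smoothing produces loops; but this paper only defines resolutions and matching bijections for crossing and self-crossing \emph{snake} graphs. Crossings involving \emph{band} graphs (a loop crossing an arc, a loop crossing a loop, or a self-crossing loop) are explicitly deferred to the forthcoming paper \cite{CS3}. For precisely this reason the paper's own Corollary \ref{skein} is stated ``not including any closed loops,'' and the general statement rests on the citation to \cite{MW}. Your parenthetical remark that for a closed loop one uses the band graph and good matchings does not repair this: you would need analogues of Definitions \ref{resolution}--\ref{resolution self-crossing in opposite direction} and of the bijections of section \ref{sect 3} taking a band graph as input, none of which exist in this paper. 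A smaller point: the final identification of $Y_1,Y_2$ with elementary-lamination intersection numbers is also not something this route yields; the paper expresses the coefficients through the snake-graph formulas above, and the lamination description is part of what is quoted from \cite{MW}.
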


\subsection{Labeled snake graphs from surfaces}\label{sect tiles}\label{sect graph}
%
%

 Let 
$\zg$ be an   arc in $(S,M)$ which is not in $T$. 
Choose an orientation on $\zg$, let $s\in M$ be its starting point, and let $t\in M$ be its endpoint.
We denote by
$s=p_0, p_1, p_2, \ldots, p_{d+1}=t$
the points of intersection of $\zg$ and $T$ in order.  
Let $\tau_{i_j}$ be the arc of $T$ containing $p_j$, and let 
$\zD_{j-1}$ and 
$\zD_{j}$ be the two   triangles in $T$ 
on either side of 
$\tau_{i_j}$. Note that each of these triangles has three distinct sides, but not necessarily three distinct vertices, see Figure \ref{figr1}.
\begin{figure}
\includegraphics{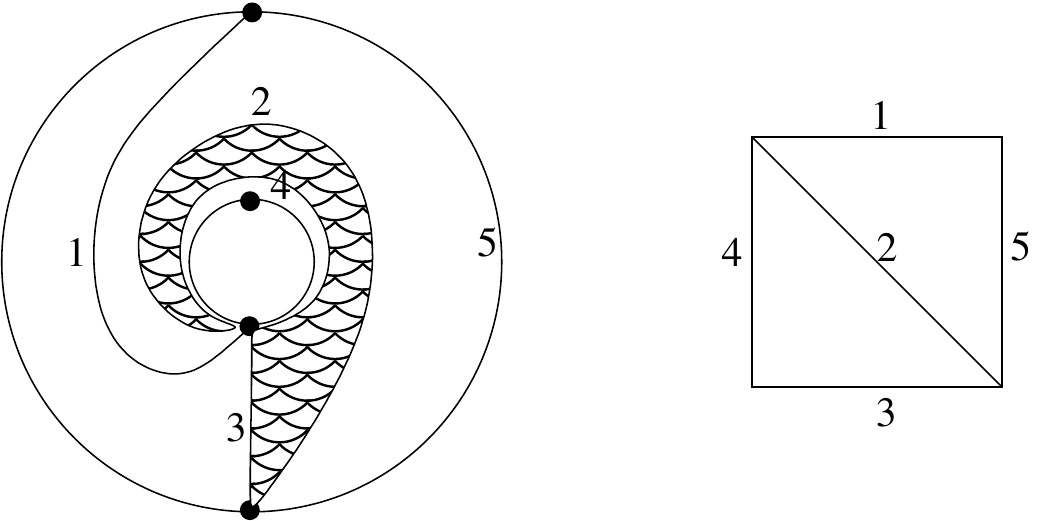}
\caption{On the left, a triangle with two vertices; on the right the tile $G_{j}$ where $i_j=2$. }\label{figr1}
\end{figure}
Let $G_j$ be the graph with 4 vertices and 5 edges, having the shape of a square with a diagonal, such that there is a bijection between the edges of $G_j$ and the 5 arcs in the two triangles $\zD_{j-1}$ and $\zD_j$, which preserves the signed adjacency of the arcs up to sign and such that the diagonal in $G_j$ corresponds to the arc $\tau_{i_j}$ containing the crossing point $p_j$. Thus $G_j$ is given by the quadrilateral in the triangulation $T$ whose diagonal is $\tau_{i_j}$. 

Given a planar embedding $\tilde G_j$ 
of $G_j$, we define the \emph{relative orientation} 
$\mathrm{rel}(\tilde G_j, T)$ 
of $\tilde G_j$ with respect to $T$ 
to be $\pm 1$, based on whether its triangles agree or disagree in orientation with those of $T$.  
For example, in Figure \ref{figr1},  $\tilde G_j$ has relative orientation $+1$.

Using the notation above, 
the arcs $\tau_{i_j}$ and $\tau_{i_{j+1}}$ form two edges of a triangle $\zD_j$ in $T$.  Define $\tau_{e_j}$ to be the third arc in this triangle.

We now recursively glue together the tiles $G_1,\dots,G_d$
in order from $1$ to $d$, so that for two adjacent   tiles, 
 we glue  $G_{j+1}$ to $\tilde G_j$ along the edge 
labeled $\tau_{e_j}$, choosing a planar embedding $\tilde G_{j+1}$ for $G_{j+1}$
so that $\mathrm{rel}(\tilde G_{j+1},T) \not= \mathrm{rel}(\tilde G_j,T).$  See Figure \ref{figglue}.

\begin{figure}
\begin{center}
 \scalebox{1}{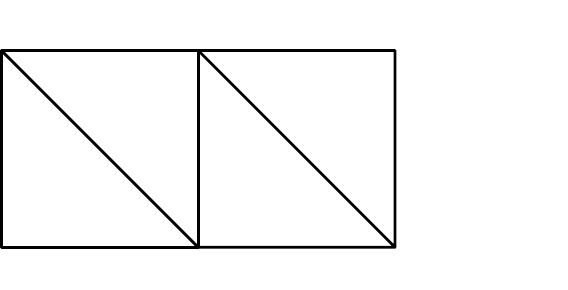}
\caption{Glueing tiles $\tilde G_j$ and $\tilde G_{j+1}$ along the edge labeled  $\tau_{e_j}$}
\label{figglue}
\end{center}
\end{figure}

After gluing together the $d$ tiles, we obtain a graph (embedded in 
the plane),
which we denote  by
${\calg}^\triangle_{\zg}$. 
\begin{defn}
The \emph{(labeled) snake graph} $\calg_{\gamma}$ associated to $\gamma$ is obtained 
from ${\calg}^\triangle_{\zg}$ by removing the diagonal in each tile.
\end{defn}

In Figure \ref{figsnake}, we give an example of an 
 arc $\gamma$ and the corresponding snake graph 
${\calg}_{\zg}$. Since $\gamma$ intersects $T$
five times,  
${\calg}_{\zg}$ has five tiles. 

\begin{remark}
 \label{rem sign}{
 Let $f$ be a sign function on $\calg_\zg$ as in section \ref{sect 2}. The interior edges $e_1,\ldots,e_{d-1}$  are corresponding to the sides of the triangles $\zD_1,\ldots,\zD_{d-1}$ that are not crossed by $\zg$. Two interior edges $e_j,e_k$ have the same sign $f(e_j)=f(e_k)$ if and only if the sides $\tau_{e_j},\tau_{e_k}$ lie on the same side of the segments of $\zg$ in $\zD_j$ and $\zD_k$, respectively.
 }
\end{remark}

 \begin{defn}If $\tau \in T$ then we define its (labeled) snake graph $\calg_{\tau}$ to be the graph consisting of one single edge with weight $x_{\tau}$ and two distinct endpoints (regardless whether the endpoints of $\tau$ are distinct).
 \end{defn}

\begin{figure}
\input{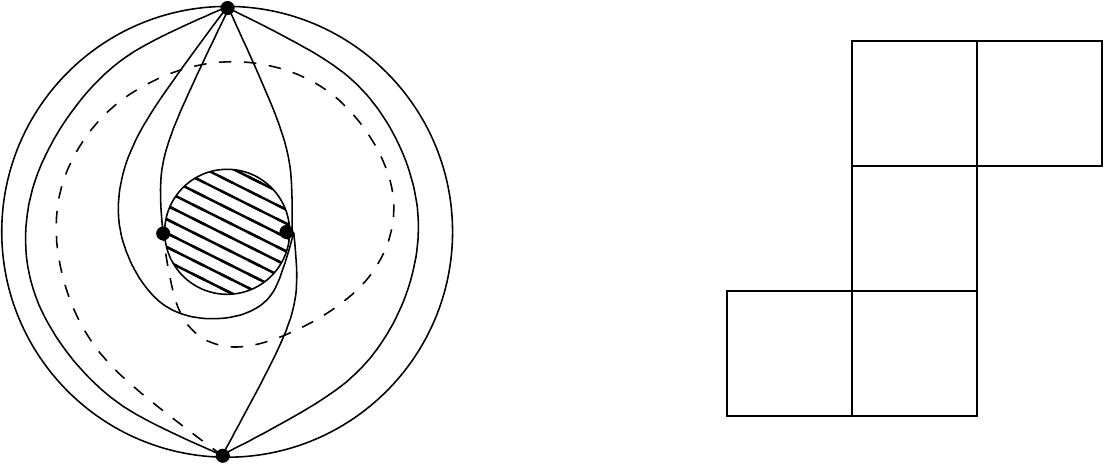_t}
\caption{An arc $\gamma$ in a triangulated annulus on the left and the corresponding {labeled} snake graph $\calg_{\gamma}$ on the right. The tiles labeled 1,3,1 have positive relative orientation and the tiles 2,4 have negative relative orientation.}\label{figsnake}
\end{figure}

Now we associate a similar graph to closed loops.
Let $\zeta$ be a closed loop in $(S,M)$, which may or may not have self-intersections, 
but which is not contractible and has no contractible kinks.  Choose
an orientation for $\zeta$, and a 
triangle $\Delta$ which is crossed by $\gamma$.  Let $p$ be a point in the interior of $\Delta$ which lies on $\gamma$, and let $b$ and $c$ be the two sides of the triangle crossed by $\gamma$
immediately before and following its travel through point $p$.
Let $a$ be the third side of $\Delta$.   
We let $\tilde{\gamma}$ denote the arc from $p$ back to itself that exactly
follows closed loop $\gamma$. 

\begin{figure}
\scalebox{1.3}
{\small
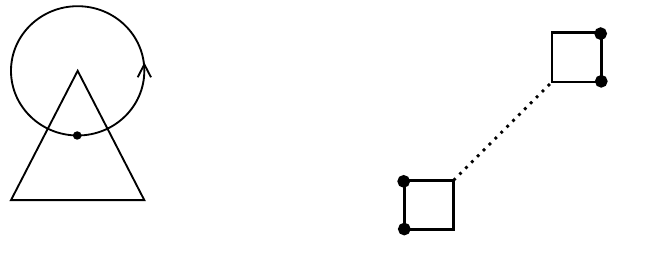
}
\caption{A triangle containing $p$ along a closed loop $\zeta$ (on the left) and the  corresponding band graph with $x\sim x'$, $y\sim y'$ (on the right) }
\label{fig band}
\end{figure}

We start by building the snake graph $\calg_{\tilde{\gamma}}$ 
as defined above.  In the first tile of $\calg_{\tilde{\gamma}}$,  
let $x$ denote the vertex
at the corner of the edge labeled $a$ and the edge labeled $b$, and let $y$ denote the vertex at the other end of the edge labeled $a$.  Similarly, in the last tile of
$G_{\tilde{\gamma}}$,  
let $x'$ denote the vertex at the corner of the edge labeled
$a$ and the edge labeled $b$, and let $y'$ denote the vertex at the other
end of the edge labeled $a$.  See the right of Figure \ref{fig band}.
Our convention for $x'$ and $y'$ are exactly opposite to those in \cite{MSW2}.

\begin{defn} \label{def band} The (labeled) \emph{band graph}  $\calg^\circ_{\zeta}$ associated to the loop $\zeta$
is the graph obtained from $\calg_{\tilde{\zeta}}$ by identifying the edges labeled $a$ in the first and last tiles so that the vertices $x$ and $x'$ and the vertices $y$ and $y'$ are glued together.  \end{defn}

\subsection{Snake graph formula for cluster variables}\label{secdefloop}
Recall that if $\tau$ is a boundary segment then $x_{\tau} = 1$,

If $\calg$ is a (labeled) snake graph and the edges of  a perfect matching $P$ of  
$\calg $ are labeled $\tau_{j_1},\dots,\tau_{j_r}$, then 
the {\it weight} $x(P)$ of $P$ is 
$x_{\tau_{j_1}} \dots x_{\tau_{j_r}}$.

Let $\zg$ be a generalized  arc and  $\tau_{i_1}, \tau_{i_2},\dots, \tau_{i_d}$
be the sequence of arcs in $T$ which $\zg$ crosses. The \emph{crossing monomial} 
of $\gamma$ with respect to $T$ is defined as
$$\mathrm{cross}(T, \gamma) = \prod_{j=1}^d x_{\tau_{i_j}}.$$
%
  

By induction on the number of tiles it is easy to see that the snake graph
$\calg_{\zg}$  
has  precisely two perfect matchings which we call
the {\it minimal matching} $P_-=P_-(\calg_{\zg})$ and 
the {\it maximal matching} $P_+
=P_+(\calg_{\zg})$, 
which contain only boundary edges.
To distinguish them, 
if  $\mathrm{rel}(\tilde G_1,T)=1$ (respectively, $-1$),
we define 
$e_1$ and $e_2$ to be the two edges of 
${\calg}^\triangle_{\zg}$ which lie in the counterclockwise 
(respectively, clockwise) direction from 
the diagonal of $\tilde G_1$.  Then  $P_-$ is defined as
the unique matching which contains only boundary 
edges and does not contain edges $e_1$ or $e_2$.  $P_+$
is the other matching with only boundary edges.
In the example of Figure \ref{figsnake}, the minimal matching $P_-$ contains the bottom edge of the first tile labeled 4.

\begin{lem}\cite[Theorem 5.1]{MS}
\label{thm y}
The symmetric difference $P_-\ominus P$ is the set of boundary edges of a 
(possibly disconnected) subgraph $\calg_P$ of $\calg_\zg$,
which is a union of cycles.  These cycles enclose a set of tiles 
$\cup_{j\in J} G_{j}$,  where $J$ is a finite index set.
\end{lem}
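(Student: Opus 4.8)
The plan is to combine the standard structure theorem for the symmetric difference of two perfect matchings with the planarity of $\calg_\zg$ and the fact that its bounded faces are exactly the tiles.

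First I would record the general fact that governs any symmetric difference of matchings. Since $P_-$ and $P$ are both perfect matchings, every vertex of $\calg_\zg$ is incident to exactly one edge of $P_-$ and exactly one edge of $P$. Hence in $P_-\ominus P$ each vertex has degree $0$ (when the two matchings agree there) or degree $2$ (when they disagree). A graph in which every vertex has degree $0$ or $2$ is a vertex-disjoint union of simple cycles, and along each such cycle the edges belong alternately to $P_-$ and to $P$, so every cycle has even length. Thus $P_-\ominus P$ is a disjoint union of simple cycles in $\calg_\zg$.

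Next I would invoke the planar structure of the snake graph. By construction $\calg_\zg$ is embedded in the plane as a staircase of unit squares, and the defining conditions of a snake graph guarantee that this union of tiles is simply connected: consecutive tiles share an edge, tiles at distance two share at most a vertex, and tiles at distance at least three are disjoint, so there are no holes and the only bounded faces are the $d$ tile interiors $G_1,\dots,G_d$. Each simple cycle $C$ produced above is a closed curve made of edges of $\calg_\zg$, so by the Jordan curve theorem it bounds a region whose interior is a union of bounded faces of $\calg_\zg$, that is, a union of whole tiles. Because distinct cycles of $P_-\ominus P$ are vertex-disjoint and $\calg_\zg$ is a thin strip, the regions they bound are disjoint blocks of tiles with no nesting.

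Finally I would set $\calg_P=\bigcup_{j\in J}G_j$ to be the union of all tiles enclosed by the cycles of $P_-\ominus P$, where $J$ indexes those tiles. Each cycle is then exactly the topological boundary of the block it encloses, so its edges are precisely the boundary edges of that block, while any edge shared by two enclosed tiles lies in the interior of $\calg_P$ and is therefore absent from $P_-\ominus P$. This identifies $P_-\ominus P$ with the set of boundary edges of $\calg_P$ and exhibits it as a union of cycles enclosing $\bigcup_{j\in J}G_j$, as claimed. I expect the main obstacle to be the second step: one must argue carefully that a simple cycle in the snake graph encloses a union of \emph{whole} tiles and that the symmetric-difference cycles do not nest, which is exactly where the combinatorial axioms forcing $\calg_\zg$ to be a simply-connected strip with tile faces are essential.
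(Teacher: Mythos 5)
The paper does not prove this lemma at all: it is quoted verbatim from \cite[Theorem 5.1]{MS}, so there is no internal argument to compare yours against. Judged on its own, your proof is correct and follows the standard route one would expect behind the cited result: (i) the symmetric difference of two perfect matchings has all vertex degrees $0$ or $2$, hence is a vertex-disjoint union of simple cycles with edges alternating between $P_-$ and $P$; (ii) since each tile $G_{i+1}$ sits north or east of $G_i$, the union of tiles is a monotone staircase, hence simply connected, so the bounded faces of the plane embedding of $\calg_\zg$ are exactly the tiles, and by the Jordan curve theorem each cycle encloses a union of whole tiles; (iii) each cycle is then precisely the set of boundary edges of the block of tiles it encloses. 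The one step you should tighten is the ``no nesting'' claim, for which ``thin strip'' is intuition rather than argument. The clean way: every tile shares an edge with at most two other tiles (its neighbors in the snake), so it has at least two edges lying on no second tile; by simple-connectedness such edges are incident to the unbounded face, and an edge incident to the unbounded face can never lie strictly inside a cycle of the graph. A nested cycle would enclose some tile all of whose edges lie strictly inside the outer cycle, a contradiction. This observation also completes the identification of $P_-\ominus P$ with the boundary edges of $\calg_P$, since it rules out any cycle edge being interior to the enclosed block.
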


\begin{defn} \label{height} With the notation of Lemma \ref{thm y},
we  define the \emph{height monomial} $y(P)$ of a perfect matching $P$ of a snake graph $\calg_\gamma$ by
\begin{equation*}
y(P) = \prod_{j\in J} y_{\tau_{i_j}}.
\end{equation*}\end{defn}

Following \cite{MSW2}, for each generalized arc $\gamma$, we now define a Laurent polynomial $x_\zg$, as well as a polynomial 
$F_\zg^T$ obtained from $x_{\gamma}$ by specialization.
\begin{defn} \label{def:matching}
Let $\zg$ 
be a generalized arc and let $\calg_\zg$,
be its snake graph.  
\begin{enumerate}
\item If $\zg$ 
has a contractible kink, let $\overline{\zg}$ denote the 
corresponding generalized arc with this kink removed, and define 
$x_{\zg} = (-1) x_{\overline{\zg}}$.  
\item Otherwise, define
\[ x_{\gamma}= \frac{1}{\mathrm{cross}(T,\zg)} \sum_P 
x(P) y(P),\]
 where the sum is over all perfect matchings $P$ of $G_{\zg}$.
\end{enumerate}
Define $F_{\gamma}^T$ to be the polynomial obtained from 
$x_{\gamma}$ by specializing all the $x_{\tau_i}$ to $1$.

If $\gamma$ is a curve that 
cuts out a contractible monogon, then we define $\gamma =0$.     
\end{defn}

\begin{thm}\cite[Thm 4.9]{MSW}
\label{thm MSW}
If $\gamma$ is an arc, then 
$x_{\gamma}$ 
is a the cluster variable in $ \A$,
written as a Laurent expansion with respect to the seed $\Sigma_T$,
and $F_{\gamma}^T$ is its \emph{F-polynomial}.
\end{thm}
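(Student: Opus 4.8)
The plan is to leverage the characterization of cluster variables recorded in Theorem~\ref{clust-surface}: the cluster variables of $\A$ are in bijection with the arcs, flips of the triangulation correspond to seed mutations, and consequently the family $(x^{\mathrm{clus}}_\gamma)_\gamma$ is the \emph{unique} collection of Laurent polynomials in $\mathbf{x}_T$ that takes the correct values on the initial arcs and satisfies every exchange relation coming from a flip. Thus it suffices to show that the matching formula of Definition~\ref{def:matching} reproduces the initial cluster variables and obeys the same exchange relations; by uniqueness it must then coincide with $x^{\mathrm{clus}}_\gamma$. The F-polynomial statement is then automatic: $F_\gamma^T$ is \emph{defined} as the specialization of $x_\gamma$ at $x_{\tau_i}=1$, and the F-polynomial of a cluster variable with principal coefficients is obtained by the identical specialization of the initial $x$-variables.

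For the base case, take $\gamma=\tau_i\in T$. Then $\calg_{\tau_i}$ is a single edge of weight $x_{\tau_i}$ with a unique perfect matching, and since $\gamma$ crosses no arc of $T$ both the crossing monomial $\mathrm{cross}(T,\gamma)$ and the height monomial $y(P)$ are trivial; hence $x_\gamma=x_{\tau_i}$, the corresponding initial cluster variable. Boundary segments are covered by the convention $x_\tau=1$. This pins down the normalization that the inductive step will propagate.

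For the inductive step I would induct on the number of crossings $d=|\gamma\cap T|$ of $\gamma$ with the fixed triangulation $T$. When $d\ge 1$ one exhibits a flip producing an exchange relation of the form $x_\tau\, x_{\gamma}=Y_1\,x_a x_c+Y_2\,x_b x_d$, where $\tau,a,b,c,d$ are arcs crossing $T$ fewer times than $\gamma$, so that their matching formulas are already identified with cluster variables by the inductive hypothesis. The heart of the argument is to verify this relation at the level of the matching formula. Here the machinery of the present paper is exactly the right tool: the two arcs $\tau$ and $\gamma$ cross once, so their snake graphs cross, and Theorem~\ref{bijections} furnishes a bijection between the perfect matchings of the crossing pair and those of its resolution, where the resolution realizes precisely the smoothing appearing in the skein relation of Theorem~\ref{th:skein1}. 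Combining the combinatorial bijection with the skein relation upgrades the matching identity to the required algebraic exchange relation in $\A$.

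The main obstacle is the bookkeeping of coefficients. Establishing that the bijection of Theorem~\ref{bijections} is not merely a matching bijection but is \emph{weight-preserving} on the edge labels \emph{and} compatible with the enclosed-tile data of Lemma~\ref{thm y} and Definition~\ref{height} that records the height monomials (equivalently, the principal-coefficient $y$-monomials $Y_1,Y_2$) is delicate, since a careless choice of orientations or of the minimal matching $P_-$ shifts every monomial by a global factor and would produce a wrong power of the $y_{\tau_i}$. Once the bijection is shown to respect both weights and heights, the normalization fixed by the base case guarantees the two sides of the exchange relation agree as Laurent polynomials, and uniqueness of cluster variables forces $x_\gamma$ to be the cluster variable and $F_\gamma^T$ its F-polynomial.
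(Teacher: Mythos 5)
First, a structural point: the paper does not prove this statement at all. Theorem~\ref{thm MSW} is imported verbatim from \cite[Theorem 4.9]{MSW}, and the paper's logical order is exactly the reverse of yours: the expansion formula is taken as input, and the skein relations (Theorem~\ref{th:skein1}, reproved here as Corollary~\ref{skein}) are \emph{derived} from it together with the combinatorial bijections. So the only meaningful question is whether your argument is sound on its own, and there it has a genuine gap.

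The gap is in the inductive step, where you invoke Theorem~\ref{th:skein1} to ``upgrade the matching identity to the required algebraic exchange relation in $\A$.'' There are two ways to read Theorem~\ref{th:skein1}, and neither does what you need. Read combinatorially, it is an identity among the Laurent polynomials $x_C$ \emph{defined by the matching formula} (Definitions~\ref{def:matching} and~\ref{def closed loop}); that identity is indeed available non-circularly (it is in effect Theorem~\ref{laurent} and Theorem~\ref{laurent2}, proved by the weight-preserving bijections of \cite{CS}), but it relates the very polynomials whose identification with cluster variables you are trying to establish, so it cannot feed your uniqueness argument. Read instead as a statement about cluster algebra elements attached to curves, its known proofs --- the original one in \cite{MW} and the new one in this paper --- presuppose Theorem~\ref{thm MSW}, so invoking it is circular. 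What the uniqueness argument actually requires is the exchange relation satisfied by the \emph{genuine} cluster variables, $x^{\mathrm{clus}}_\tau x^{\mathrm{clus}}_\gamma = Y_1\, x^{\mathrm{clus}}_a x^{\mathrm{clus}}_c + Y_2\, x^{\mathrm{clus}}_b x^{\mathrm{clus}}_d$, with $Y_1,Y_2$ written as explicit monomials in the \emph{initial} principal coefficients $y_{\tau_i}$. Theorem~\ref{clust-surface} gives such a relation abstractly (flip $=$ mutation), but identifying the mutation coefficients at a seed far from $\Sigma_T$ with the geometric monomials $Y_1, Y_2$ is precisely the hard part; it is the content of the laminated lambda-length theory of \cite{FT} (or must be proved by an equivalent induction), and it appears nowhere in your sketch. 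Your ``delicate bookkeeping'' paragraph addresses only the combinatorial side --- weight and height preservation of the bijection of Theorem~\ref{bijections} --- which is the unproblematic half. Two smaller omissions: you assert without proof the geometric lemma that every arc crossing $T$ at least once is a diagonal of a quadrilateral whose second diagonal and four sides each cross $T$ strictly fewer times (needed to run the induction at all); and it is this coefficient difficulty that explains why the proof in \cite{MSW} is organized so as to track how the matching formula transforms under flips of the triangulation, rather than by quoting skein-type relations, which historically came \emph{after} the expansion formula and depend on it.
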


 Again following \cite{MSW2}, we define for every closed loop $\zeta$, a Laurent polynomial $x_\zeta$, as well 
as a polynomial 
$F_\zeta^T$ obtained from $x_{\zeta}$ by specialization.
\begin{defn} \label{def closed loop} 
Let $\zeta$  
be a closed loop.  
\begin{enumerate}
\item If $\zeta$ is a contractible loop,
 then let $x_\zeta = -2$.
\item If $\zeta$ 
has a contractible kink, let $\overline{\zeta}$ denote the 
corresponding closed loop with this kink removed, and define 
$x_{\zeta} = (-1) x_{\overline{\zeta}}$.
\item Otherwise, let 
$$x_{\zeta} = \frac{1}{\mathrm{cross}(T,\zg)} \sum_{P} 
x(P) y(P),$$ where the sum is over all good matchings $P$ of the  band graph $\calg^\circ_{\zeta}$. 
\end{enumerate}
Define $F_\zeta^T$ to be the Laurent polynomial obtained from 
$x_\zeta$ by specializing all the $x_{\tau_i}$ to $1$.
\end{defn}

\subsection{Bases of the cluster algebra}\label{sec:bangbrac}
{We recall the construction of the two bases given in \cite{MSW2} in terms of bangles and bracelets.}

\begin{defn}
Let $\zeta$ be an essential loop in $(S,M)$.  
The 
\emph{bangle} $\Bang_k \zeta$ is the union of $k$ loops isotopic to $\zeta$.
(Note that $\Bang_k \zeta$ has no self-crossings.)  And the 
\emph{bracelet} $\Brac_k \zeta$ is the closed loop obtained by 
concatenating $\zeta$ exactly $k$ times, see Figure \ref{figbangbrac}.  (Note that it will have $k-1$ self-crossings.)
\end{defn}

\begin{figure}
\scalebox{0.8}{\includegraphics{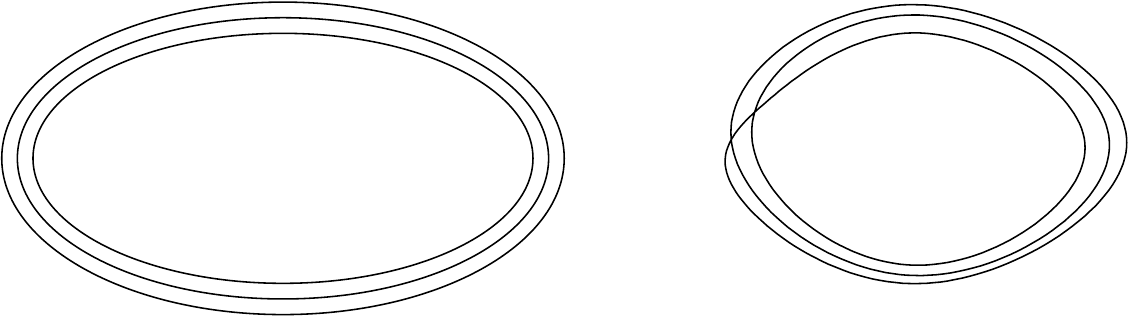}}
\caption{A bangle $\Bang_3 \zeta$, on the left, and a bracelet $\Brac_3 \zeta$, on the right.}\label{figbangbrac}
\end{figure}
Note that $\Bang_1 \zeta = \Brac_1 \zeta = \zeta$.

\begin{defn}
\label{def C0-compatible}
A collection $C$ of arcs and essential loops is called 
\emph{$\C^{\circ}$-compatible} if no two elements of $C$ cross each other.
Let $\C^{\circ}(S,M)$ be the set of all 
$\C^{\circ}$-compatible collections in $(S,M)$.
\end{defn}

\begin{defn}
A collection $C$ of arcs and bracelets is called 
\emph{$\C$-compatible} if:
\begin{itemize}
\item no two elements of $C$ cross each other except for the self-crossings of a bracelet; and
\item given an essential loop $\zeta$ in $(S,M)$, 
there is at most one $k\ge 1$ such
that the $k$-th bracelet $\Brac_k\zeta$ lies in $C$, and, moreover, there is at
most one copy of this bracelet $\Brac_k\zeta$ in $C$.
\end{itemize}
Let $\C(S,M)$  be the set of all $\C$-compatible
collections in $(S,M)$.
\end{defn}

Note that a $\C^{\circ}$-compatible collection may contain 
bangles $\Bang_k \zeta$ for $k \geq 1$, but it will not contain
bracelets $\Brac_k \zeta$ except when $k=1$.
And  a $\C$-compatible collection may contain bracelets, but will never
contain a bangle $\Bang_k \zeta$ except when $k=1$.

\begin{defn}
Given an arc or closed loop $c$, let 
$x_c$ denote the corresponding Laurent polynomial
defined in Section \ref{secdefloop}.
Let $\B^\circ$ 
be the set of all cluster algebra 
elements corresponding to the set $C^{\circ}(S,M)$, 
\[\B^{\circ} = \left\{\prod_{c\in C} x_c \ \vert \ C \in \C^{\circ}(S,M) \right\}.\]
Similarly, let
\[\B = \left\{\prod_{c\in C} x_c \ \vert \ C \in \C(S,M) \right\}.\]
\end{defn}

\begin{remark}
Both $\B^{\circ} $ and $\B $ contain
the cluster monomials of $\A $.
\end{remark}

We are now ready to state the main result of \cite{MSW2}.

\begin{thm}\cite[Theorem 4.1]{MSW2} If the surface has no punctures and at least two marked points then
the sets $\B^\circ$ and $\B$ are bases of the cluster algebra $\A$.                             
\end{thm}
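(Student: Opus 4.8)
The plan is to prove both assertions simultaneously by establishing the two standard pillars: that each of $\B^\circ$ and $\B$ \emph{spans} $\A$, and that each is \emph{linearly independent} over the ground ring $\ZZ[\mathbf{y}]$. First I would check that every proposed basis element lies in $\A$. For a collection $C$ of arcs and essential loops, $\prod_{c\in C}x_c$ is a product of the Laurent polynomials of Section~\ref{secdefloop}; for arcs these are genuine cluster variables by Theorem~\ref{thm MSW}, and for loops the skein relations of Theorem~\ref{th:skein1} express $x_\zeta$ as a polynomial in cluster variables, so in either case the product belongs to $\A$. The Laurent phenomenon \cite{FZ1} moreover places all these elements in a common Laurent polynomial ring, which is what makes a leading-term comparison meaningful for the independence argument.

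\textbf{Spanning.} Since a cluster algebra is generated as an algebra by its cluster variables, $\A$ is spanned as a module by arbitrary products $\prod_i x_{\gamma_i}$ of cluster variables, i.e.\ by $x_C$ for $C$ an a priori crossing multicurve of arcs. I would reduce any such $x_C$ to a $\ZZ[\mathbf{y}]$-combination of the claimed generators by repeatedly applying the skein relation $x_C=\pm Y_1 x_{C_+}\pm Y_2 x_{C_-}$ of Theorem~\ref{th:skein1} at a crossing. To guarantee termination I would induct on a complexity measure (total number of pairwise crossings, with self-crossings weighted by bracelet length), using precisely the resolution results that translate the geometric smoothing of Definition~\ref{def:smoothing} into a combinatorial operation on snake and band graphs: Theorem~\ref{bijections} for crossing snake graphs, the bijections of Section~\ref{sect 3} for self-crossing snake graphs, and the band-graph resolutions of \cite{CS3}. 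Each resolution step strictly lowers the complexity and terminates in $\C^\circ$-compatible collections (bangles) for $\B^\circ$, or $\C$-compatible collections (bracelets) for $\B$; the two terminal forms arise from the two conventions for resolving a self-crossing, i.e.\ the choice between $\Bang$ and $\Brac$.

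\textbf{Linear independence.} This I would prove by a triangularity argument with respect to $\mathbf{g}$-vectors, equivalently the shear coordinates of the underlying curves with respect to the elementary laminations of $T$. To each compatible collection $C$ attach the integer vector $\mathbf{g}(C)\in\ZZ^n$ read off from the minimal matchings $P_-$ of the snake and band graphs of its components; concretely $\mathbf{g}(C)$ records the extremal Laurent monomial of $x_C$, which appears with coefficient $\pm1$ and equals the product of the components' extremal monomials. The decisive geometric input is that an arc or essential loop is determined by its shear coordinates, so $C\mapsto\mathbf{g}(C)$ is injective on $\C^\circ(S,M)$ and on $\C(S,M)$. Ordering by this extremal data then makes any finite set of the $x_C$ unitriangular, so a nontrivial relation would leave the extremal monomial of its $\mathbf{g}$-maximal collection uncancelled, a contradiction. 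The same leading-term bookkeeping shows that $\B^\circ$ and $\B$ are related by a unitriangular change of basis (the bracelet $\Brac_k\zeta$ and the bangle power $x_\zeta^{\,k}$ share their extremal term and differ by strictly lower-order terms), consistent with both being bases.

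\textbf{Main obstacle.} The genuinely hard step is linear independence, and within it the injectivity of $C\mapsto\mathbf{g}(C)$ together with the claim that the extremal term of $x_C$ is a pure product of the components' extremal terms with coefficient $\pm1$: this requires controlling the minimal matchings of disjoint unions and of band graphs at once, and it is here that the no-puncture hypothesis (so that all components are honest snake and band graphs) and the hypothesis of at least two marked points (so that boundary segments contribute $x_\tau=1$ and the $\mathbf{g}$-vector bookkeeping stays clean) are used. Termination of the spanning reduction is the second delicate point, but it is exactly what the resolution calculus of the present paper and of \cite{CS,CS3} is designed to supply.
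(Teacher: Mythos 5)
The first thing to note is that the paper you are working from does not prove this statement at all: it is imported verbatim from \cite{MSW2} (Theorem 4.1 there) as background in Section~\ref{sect 4}, so there is no internal proof to compare against; the relevant benchmark is the proof in \cite{MSW2} itself. Measured against that, your outline is essentially the right one and follows the same architecture: membership of the $x_C$ in $\A$ via skein relations, spanning by induction on the number of crossings using Theorem~\ref{th:skein1}, and linear independence by a unitriangularity argument in which compatible collections are separated by their $\gg$-vectors, whose injectivity comes from the Fomin--Thurston result that curves and laminations are determined by their shear coordinates \cite{FT}. The observation that $\B^\circ$ and $\B$ are related by a unitriangular change of basis (bracelets versus powers of bangles sharing leading terms) is also how the two bases are handled simultaneously in \cite{MSW2}.

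Three points in your sketch deserve correction, though none is fatal to the outline. First, an essential loop $\zeta$ has no self-crossings, so Theorem~\ref{th:skein1} cannot be applied to $\zeta$ directly to conclude $x_\zeta\in\A$; one must instead exhibit $x_\zeta$ as a term in the skein resolution of a product of crossing arcs (or of a bracelet) and solve for it --- a standard but necessary detour. Second, your explanation of where the hypothesis of at least two marked points enters is not right: boundary segments satisfy $x_\tau=1$ for any number of marked points. In \cite{MSW2} this hypothesis is used in the linear-independence part (the parametrization of compatible collections by $\gg$-vectors via laminations), and it is precisely this restriction that \cite{CLS} --- cited in the present paper --- later removed using the snake graph calculus developed here. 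Third, your termination argument for the spanning step invokes the band-graph resolutions of \cite{CS3}, which is only ``in preparation''; the proof in \cite{MSW2} does not need this, since it runs the induction directly on the geometric skein relations of \cite{MW} rather than on their snake-graph counterparts.
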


\begin{remark}
 This result has been extended to surfaces with only one marked point in \cite{CLS}.
\end{remark}

\section{Relation to cluster algebras}\label{sect 5}
In this section we show how our results on abstract snake graphs are related to computations in cluster algebras from unpunctured surfaces. 

\subsection{Homotopy after removing a puncture and its effect on the snake graph}\label{sect homotopy}
{We start by studying the technique of introducing 
and removing a puncture and its effect on snake graphs.}
Let $T$ be a triangulation of an {unpunctured} surface $(S,M)$ and let $\Delta$ be a triangle in $T$ and label the sides of $\Delta$ by $1,2,3.$

Let $(\dot S, \dot M)$ be the surface obtained from $(S,M)$ by introducing a puncture in the interior of the triangle $\Delta,$ and let $T'$ be the triangulation obtained from $T$ by adding three arcs $a, b, c$ connecting  the puncture to the vertices of $\Delta$.

Let $\dot \gamma$ be an arc in $(\dot S, \dot M)$ and let $\gamma$ be the corresponding arc in $(S,M).$ Consider the  three local configurations illustrated in Figure \ref{puncture+homotopy}.

\begin{figure}\scalebox{0.8}{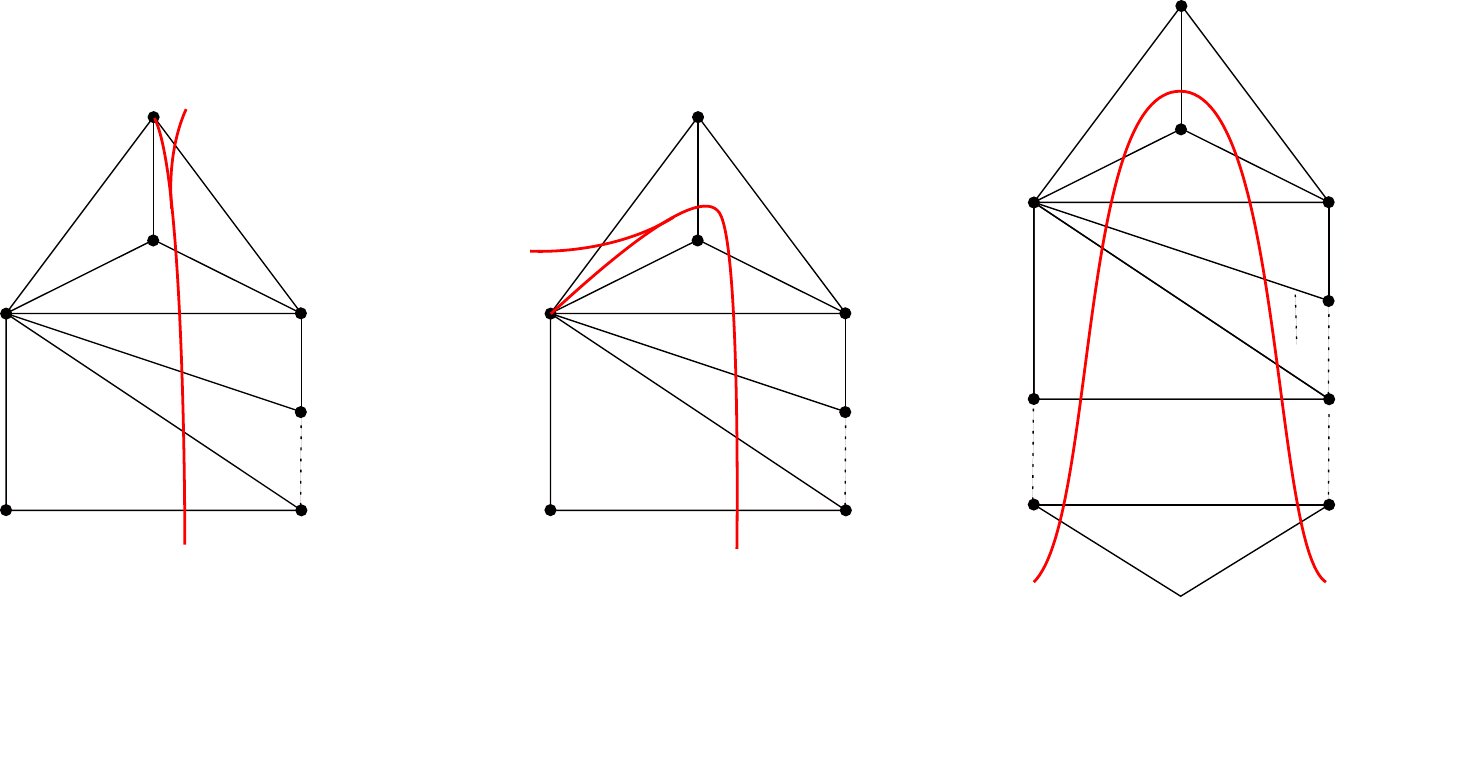}
\caption{Homotopy after removing a puncture } 
\label{puncture+homotopy}
\end{figure}

In the first case, the arc $\dot\gamma$ crosses one of the arcs $a,b,c$ and then the arc $\gamma$ crosses the same arcs as $\dot \gamma$ (locally) except for the arc $a, b$ or $c$ crossed by $\dot \gamma.$

In the second case, the arc $\dot\gamma$ crosses two of the arcs $a,b,c$ and   we have two subcases. If $\gamma$ crosses a side of $\Delta$ immediately before and after crossing two of the arcs $a, b, c$ then $\gamma$ crosses the same arcs as $\dot \gamma$ except for the two arcs of $a, b, c.$ On the other hand, if $\gamma$ starts at a vertex $x$ of $\Delta$ and then crosses two of the arcs $a,b,c,$ then the difference in the crossings of $\dot \gamma$ and $\gamma$ is the two arcs among $a, b, c$ plus a whole fan of arcs with common vertex $x.$ In the example in Figure~\ref{puncture+homotopy} these arcs are $b, c, 3, 4, \dots, k. $ 

In the third case, the {arc} $\dot\gamma$ crosses all three arcs $a,b,c$ and   the difference in the crossings of $\dot \gamma$ and $\gamma$ is the $3$ arcs $a, b, c$ plus the local overlap of arcs before and after the arcs $a, b, c.$ In the example in Figure \ref{puncture+homotopy}, these arcs are $\ell, \dots, k+1, k, \dots, 4, 3, a, b, c, 3,4, \dots, k, k+1, \dots, \ell. $ 

The corresponding snake graphs $\calg_{\dot \gamma}, \calg_{\gamma}$ are related by deleting the corresponding tiles and glueing the remaining pieces. 

Thus in the first case:
\[
\begin{tikzpicture}[scale=.4]
\node at (1.5,5){$\calg_{\dot \gamma}$};
\node at (-.5,-.5){$\Ddots$};
 \draw (0,0)--(1,0)--(1,2)--(0,2)--(0,0) (0,1)--(3,1)--(3,2)--(1,2) (2,1)--(2,3)--(3,3)--(3,2);
\node at (3.5,3.5){$\Ddots$};
\node[scale=.7] at (.5,.5){$2$};
\node[scale=.7] at (.5,1.5){$c$};
\node[scale=.7] at (1.5,1.5){$3$};
\node[scale=.7] at (2.5,1.5){$4$};
\node[scale=.7] at (2.5,2.5){$5$};

\node at (9.5,5){$\calg_{\gamma}$};
\draw (9,0)--(10,0)--(10,3)--(9,3)--(9,0) (9,1)--(10,1) (9,2)--(11,2)--(11,3)--(10,3);
\node at (8.5,-.5){$\Ddots$};
\node at (11.5,3.5){$\Ddots$};
\node[scale=.7] at (9.5,.5){$2$};
\node[scale=.7] at (9.5,1.5){$3$};
\node[scale=.7] at (9.5,2.5){$4$};
\node[scale=.7] at (10.5,2.5){$5$};

\end{tikzpicture}
\]

In the second case:

\[
\begin{tikzpicture}[scale=.4]
\node at (4,7){$\calg_{\dot \gamma}$};
\draw (0,0)--(4,0)--(4,1)--(0,1)--(0,0) (1,0)--(1,1) (2,0)--(2,1)  (3,0)--(3,2)--(5,2)--(5,1)--(4,1)--(4,2);
\node[scale=.7] at (.5,.5){$b$};
\node[scale=.7] at (1.5,.5){$c$};
\node[scale=.7] at (2.5,.5){$3$};
\node[scale=.7] at (3.5,.5){$4$};
\node[scale=.7] at (3.5,1.5){$5$};
\node[scale=.7] at (4.5,1.5){$6$};
\node at (5.5,2.5){$\Ddots$};
\draw (6,3)--(7,3)--(7,5)--(6,5)--(6,3) (6,5)--(6,6)--(7,6)--(7,5) (6,4)--(7,4);
\node[scale=.7] at (6.5,3.5){$k\!\!-\!\!1$};
\node[scale=.7] at (6.5,4.5){$k$};
\node[scale=.7] at (6.5,5.5){$k\! \!+\! \! 1$};
\node at (7.5,6.5){$\Ddots$};

\node at (14.5,7){$\calg_{\gamma}$};
\draw (14,4)--(15,4)--(15,5)--(14,5)--(14,4);
\node[scale=.7] at (14.5,4.5){$k\! \!+\! \! 1$};
\node at (15.5,5.5){$\Ddots$};
\node[scale=.7] at (15.5,2){Delete tiles $b,c$ and everything};
\node[scale=.7] at (15.5,1){up to the next sign change};
\end{tikzpicture}
\]

In the third case:
\[
\begin{tikzpicture}[scale=.4]
\node at (4,9){$\calg_{\dot \gamma}$};

\draw (0,0)--(2,0)--(2,1)--(0,1)--(0,0) (1,0)--(1,1);
\node[scale=.7] at (.5,.5){$\ell'$};
\node[scale=.7] at (1.5,.5){$\ell$};
\node at (2.5,1.5){$\Ddots$};
\draw (3,2)--(4,2)--(4,4)--(3,4)--(3,2) (3,3)--(6,3)--(6,4)--(3,4)--(3,3) (5,3)--(5,7)--(6,7)--(6,4) (5,5)--(6,5) (5,6)--(6,6);
\node at (6.5,7.5){$\Ddots$};
\draw (7,8)--(9,8)--(9,9)--(7,9)--(7,8) (8,8)--(8,9);

\node[scale=.7] at (3.5,2.5){$4$};
\node[scale=.7] at (3.5,3.5){$3$};
\node[scale=.7] at (4.5,3.5){$a$};
\node[scale=.7] at (5.5,3.5){$b$};
\node[scale=.7] at (5.5,4.5){$c$};
\node[scale=.7] at (5.5,5.5){$3$};
\node[scale=.7] at (5.5,6.5){$4$};
\node[scale=.7] at (7.5,8.5){$\ell$};
\node[scale=.7] at (8.5,8.5){$\ell''$};

\node at (14.5,7){$\calg_{\gamma}$};
\draw (14,4)--(15,4)--(15,6)--(14,6)--(14,4) (14,5)--(15,5);
\node[scale=.7] at (14.5,4.5){$\ell'$};
\node[scale=.7] at (14.5,5.5){$\ell''$};

\node[scale=.7] at (15.5,2){Delete tiles $a,b,c$ and then cancel};
\node[scale=.7] at (15.5,1){tiles with the same label};
\end{tikzpicture}
\]

\subsection{Crossing arcs and crossing snake graphs}
{In this subsection, we show that the notions of crossings for arcs and snake graphs coincide}.
\begin{thm}\label{thm cross} Let $\gamma_1, \gamma_2$ be generalized arcs and $\calg_1, \calg_2$ their corresponding {labeled} snake graphs (might have self-crossings.)
\begin{itemize}
\item[a)] $\gamma_1, \gamma_2$ cross with a nonempty local overlap $(\tau_{i_s}, \cdots, \tau_{i_t})=(\tau_{i'_{s'}}, \cdots, \tau_{i'_{t'}})$ if and only if $\calg_1, \calg_2$ cross in $\calg_1[s,t] \cong \calg_2[s',t'].$
\item[b)] $\gamma$ has a self-crossing with a nonempty local overlap $(\tau_{i_s}, \cdots, \tau_{i_t})=(\tau_{i_{s'}}, \cdots, \tau_{i_{t'}})$ (with or without self-intersection) if and only if $\calg_{\gamma}$ has a crossing self-overlap on $\calg_{\gamma}[s,t] \cong \calg_{\gamma}[s',t'].$
\end{itemize}
\end{thm}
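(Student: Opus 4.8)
The plan is to reduce the global question of whether two passes of arcs cross to a local computation at the two ends of the overlap, and then to match that computation with the sign conditions of Definitions~\ref{crossing} and \ref{def self-crossing} using the geometric reading of the sign function in Remark~\ref{rem sign}.

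First I would set up the correspondence between local overlaps of arcs and overlaps of snake graphs. By the construction in Section~\ref{sect graph}, the tile $G_j$ of $\calg_\gamma$ is the quadrilateral whose diagonal is the crossed arc $\tau_{i_j}$, and consecutive tiles are glued along the edge dual to the uncrossed side $\tau_{e_j}$ of the triangle $\Delta_j$. Hence if $\gamma_1,\gamma_2$ cross the common sequence $\tau_{i_s},\dots,\tau_{i_t}=\tau_{i'_{s'}},\dots,\tau_{i'_{t'}}$, the corresponding strips of triangles agree and force isomorphic subsnake graphs $\calg_1[s,t]\cong\calg_2[s',t']$, and conversely an isomorphism of subsnake graphs recovers the crossed arcs and gluing edges. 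I would then check that maximality of the arc overlap matches conditions (i),(ii) in the definition of an overlap: extending the overlap through a single shared triangle beyond $\tau_{i_s}$ or $\tau_{i_t}$ is geometrically possible exactly when both arcs turn the same way in that triangle, which is precisely the straight/zigzag alternative of condition (ii)(b).

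For part (a) I would then run the crossing analysis at the two ends. In the triangle preceding $G_s$, both arcs exit through $\tau_{i_s}$ into the strip, so they share this triangle and one finds $\tau_{e_{s-1}}=\tau_{i'_{s'-1}}$ and $\tau_{e'_{s'-1}}=\tau_{i_{s-1}}$; the symmetric statement holds at the $t$-end. Because the two passes run parallel inside the strip, any essential crossing is forced into these two end triangles, and the arcs cross there exactly when their entry/exit points are linked on the boundary of the end triangle. By Remark~\ref{rem sign} the signs $f_1(e_{s-1}),f_1(e_t)$ record which side of $\gamma_1$ the dual arcs lie on, and combined with the induced overlap relations $f_1(e_{s-1})=-f_2(e'_{s'-1})$ and $f_1(e_t)=-f_2(e'_{t'})$ these linking patterns are seen to be equivalent to one of the sign conditions (i) or (ii) of Definition~\ref{crossing}, with (ii) accounting for the boundary cases $s=1$ or $t=d$. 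To ensure the crossing detected is the \emph{essential} one rather than one removable by isotopy, I would invoke the puncture technique of Section~\ref{sect homotopy}: introducing a puncture in an end triangle realizes the unwinding isotopy combinatorially, and the explicit effect on the snake graph recorded there preserves the sign conditions, reducing the claim to a transparent base configuration in a disk or annulus.

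For part (b) I would argue in parallel. When the self-overlap has no intersection I can cut $\gamma$ at a point strictly between its two passes to produce two generalized arcs with a genuine local overlap, apply part (a), and observe that Definition~\ref{def self-crossing} is precisely Definition~\ref{crossing} written for a single sign function $f$. The hard case is the intersecting self-overlap, where the two passes share tiles and cannot be separated by one cut; here I would analyze the shared region directly, reading off the sides via Remark~\ref{rem sign} and using the puncture technique to unwind the spiral so that condition (ii), $f(e_t)=f(e_{s'-1})$, is matched against the visible self-crossing. The main obstacle is exactly this intersecting case together with the sign bookkeeping at the ends: one must verify, through all the boundary and same/opposite-direction subcases, that the unwinding operation \emph{preserves} the crossing conditions and produces the minimal crossing pattern predicted by the signs, rather than merely simplifying the picture.
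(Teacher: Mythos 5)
The genuine gap is in part (b), and it sits exactly on the case the theorem goes out of its way to include: intersecting self-overlaps. Your plan splits into a non-intersecting case (cut $\gamma$ between the two passes and apply part (a)) and an intersecting case, which you propose to ``analyze directly'' and ``unwind'' via punctures---but you never carry out that analysis; you end by naming it as the main obstacle. Since the intersecting case is the entire difficulty (the two passes share tiles and cannot be separated by any cut), the proof is incomplete precisely where it matters. The cutting step in your easy case also has a technical defect: a cut point in the interior of the surface is not a marked point, so the two pieces are not generalized arcs and have no snake graphs; to legitimize them you must introduce a puncture, which changes the triangulation and inserts extra tiles into the snake graphs (this is exactly what Section~\ref{sect homotopy} records), so part (a) cannot be applied verbatim without additional bookkeeping that your sketch does not supply.

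The paper's proof of (b) avoids both problems with one observation that is absent from your proposal: do not cut $\gamma$ at all, but take two \emph{full} copies $\gamma_1=\gamma_2=\gamma$ and view the self-crossing at parameters $t_1,t_2$ as a crossing of $\gamma_1$ with $\gamma_2$ at the point $\gamma_1(t_1)=\gamma_2(t_2)$. The local overlap of this pair is literally the local self-overlap of $\gamma$, and the overlap of the pair of snake graphs $\calg_1=\calg_2=\calg_\gamma$ is literally the self-overlap of $\calg_\gamma$; moreover, Definition~\ref{def self-crossing} is Definition~\ref{crossing} read with $f_1=f_2=f$ (an observation you do make, but only after cutting). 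Since part (a) is stated for generalized arcs that may themselves self-cross, it applies to this pair and yields (b) uniformly---intersecting or not, same or opposite direction---in a few lines. Note also that for part (a) the paper simply cites \cite[Theorem 5.3]{CS}; your outline of (a) is a plausible sketch of that argument, but claims such as ``any essential crossing is forced into the two end triangles'' are precisely the content of the cited theorem and would still need proof.
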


\begin{proof} 
a) This follows directly from Theorem 5.3 of \cite{CS}.\\
b) Let $\gamma$ be a self-crossing arc. {Choose a parametrization $\zg=\zg(t)$}, and say the self-crossing occurs at the times $t_1$ and $t_2.$ Take now two copies $\gamma_1, \gamma_2$ of $\gamma$ and consider their crossing at $\gamma_1(t_1)=\gamma_2(t_2).$ The local overlap of $\gamma_1$ and $\gamma_2$ at this crossing is the same as the local overlap of the self-crossing, and the local overlap in the corresponding snake graphs $\calg_1, \calg_2$ of $\gamma_1, \gamma_2$ is the same as the local self-overlap of the snake graph $\calg_{\gamma}.$ Now the result follows from part a).
\end{proof}

\subsection{Smoothing crossings and resolving snake graphs}
{In this subsection, we show that the smoothing operation for arcs corresponds to the resolution of crossings for snake graphs.}
The following result has been shown in \cite{CS}.
\begin{thm}\cite[Theorem 5.4]{CS} \label{smoothing1} Let $\gamma_1$ and $\gamma_2$ be two (generalized) arcs which cross with a non-empty local overlap, and let $\calg_1$ and $\calg_2$ be the corresponding {labeled} snake graphs with overlap $\calg$. Then the {labeled} snake graphs of the four arcs obtained by smoothing the crossing of $\gamma_1$ and $\gamma_2$ in the overlap are given by the resolution $\re 12$ of the crossing of the labeled snake graphs $\calg_1$ and $\calg_2$ at the overlap $\calg.$
\end{thm}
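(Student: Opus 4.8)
The plan is to reduce the statement to a purely local analysis at the crossing and then match the resulting geometric pieces with the combinatorial definition of $\res_\calg$ in Definition~\ref{resolution}. I would set up $\gamma_1,\gamma_2$ so that they both traverse the common overlap region, where $\calg_1[s,t]\cong\calg_2[s',t']$, and record the sequence of triangulation arcs crossed before, during, and after the overlap. Since smoothing is a local operation (Definition~\ref{def:smoothing}), the two outside portions of each curve are left untouched, so the only work is to understand how the initial and terminal pieces of $\gamma_1,\gamma_2$ are reconnected at the crossing point and to track what happens to the corresponding snake graphs built by the tile-by-tile procedure of section~\ref{sect graph}.

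First I would treat the smoothing $C_+$ (replacing the crossing by $\genfrac{}{}{0pt}{5pt}{\displaystyle\smile}{\displaystyle\frown}$), which reconnects the initial part of $\gamma_1$ with the terminal part of $\gamma_2$ and vice versa. The two resulting arcs cross exactly the same triangulation arcs as before, merely reattached at the overlap, so by the gluing construction their snake graphs are $\calg_1[1,t]\cup\calg_2[t'+1,d']$ and $\calg_2[1,t']\cup\calg_1[t+1,d]$, with adjacencies induced by the ambient triangulation. The crossing hypothesis, through the sign conditions of Definition~\ref{crossing} together with the geometric interpretation of the sign function in Remark~\ref{rem sign}, guarantees that these gluings agree with the adjacencies prescribed for $\calg_3$ and $\calg_4$. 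This identifies the pair of $C_+$-arcs with $\{\calg_3,\calg_4\}$, which is the part of the resolution that retains the overlap.

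The harder half is the smoothing $C_-$ (replacing the crossing by $\supset\subset$), which produces the two arcs whose snake graphs should be $\calg_5,\calg_6$. Here the reconnection reverses one copy of the overlap, so the new curves generically fail to be in minimal position and must be isotoped and unwound before their snake graphs can be read off; this is the ``reduced symmetric difference'' behaviour noted after Definition~\ref{resolution}. The tool I would use is the puncture-and-homotopy technique of section~\ref{sect homotopy}: introduce a puncture in the triangle at the crossing, replace the local configuration by arcs meeting the auxiliary arcs $a,b,c$, isotope the $C_-$-arcs to minimal position, and then remove the puncture. The three homotopy cases translate directly into the tile-deletion rules recorded there, and these are precisely the operations $\calg_5'\setminus\suc(e)$ and $\calg_5'\setminus\pred(e)$ (and their analogues for $\calg_6'$) in Definition~\ref{resolution}: whether an endpoint sits at a marked point ($s=1$, $s'=1$, $t=d$, or $t'=d'$) determines whether a whole fan of crossings cancels, and the ``next sign change'' in the homotopy picture matches the first or last edge attaining the prescribed value of the sign function $f_5$ or $f_6$.

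The main obstacle is the bookkeeping in the $C_-$ case: one must verify that the cancellations forced by putting the arcs in minimal position terminate exactly at the edge singled out by the sign function, and that the degenerate situations (endpoints at marked points, a curve cutting out a monogon, a resulting single edge) line up with the corresponding subcases of $\calg_5,\calg_6$. I expect the cleanest route is a case analysis on the four possibilities $s=1$ versus $s>1$ together with $t=d$ versus $t<d$ (and symmetrically for the primed indices), reducing each to one of the three local pictures of section~\ref{sect homotopy}, while invoking part~(a) of Theorem~\ref{thm cross} to ensure that the overlap structure identified combinatorially persists throughout the isotopy.
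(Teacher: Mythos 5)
First, note that this paper does not actually prove Theorem~\ref{smoothing1}: it is imported from \cite{CS}, where the proof is a direct isotopy argument (the $C_+$ curves are shown to stay in minimal position after reattachment, and the unwinding of the $C_-$ curves, including the fan cancellations at shared endpoints, is matched against the sign-function cuts $\calg_5'\setminus\suc(e)$, $\calg_5'\setminus\pred(e)$). Your $C_+$ half and your observation that ``the next sign change'' in the fan picture corresponds to the first/last edge with the prescribed sign are consistent with that argument.

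The genuine gap is in your main tool for the $C_-$ half. A puncture placed ``in the triangle at the crossing'' does not obstruct the isotopies you need to control: after the $\supset\subset$ smoothing, each of the two curves has a cap that doubles back through the overlap, the bigons this cap forms with the overlap arcs lie \emph{inside} the cap, and the crossing point---hence your puncture---lies \emph{outside} both caps, between them. So in the punctured surface both $C_-$ curves unwind exactly as freely as in the unpunctured one; after you ``isotope the $C_-$-arcs to minimal position'' there, they no longer wind around the puncture at all, the deletion rules of section~\ref{sect homotopy} have essentially nothing to delete, and the entire content of the theorem is hidden in that isotopy step, which is precisely what needed to be proved. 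The puncture placement that genuinely freezes a curve---on a segment joining the two strands of the crossing, as in the proofs of Theorems~\ref{thm430} and~\ref{smoothingselfgrafting}---only exists for a \emph{self}-crossing, where such a segment of the curve is available; moreover, in those proofs the punctured-surface crossing is resolved by invoking the two-arc theorem itself, so any variant of your plan in which you resolve the crossing inside the punctured surface and then descend is circular. To salvage your scheme you would need to place a separate puncture inside each cap (one punctured surface per smoothed curve), prove that the smoothed curve is then in minimal position there, and only then apply cases 2 and 3 of section~\ref{sect homotopy}; none of these steps appears in the proposal, and they are where the substance of the proof lies.
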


We now show the analogous statement for self-crossings.
\begin{thm}\label{thm430} Let $\gamma_1$ be a self-crossing arc with nonempty local overlap and let $\calg_1$ be the corresponding {labeled} snake graph with crossing overlap $i_1(\calg)=\calg_1[s,t]$ and $i_2(\calg)=\calg_1[s',t'].$ Then the {labeled} snake graphs of the two arcs and the {labeled} band graph of the loop obtained by smoothing the crossing of $\gamma_1$ on the overlap are given by the resolution $\res_{\calg}(\calg_1)$ of the self-crossing of the labeled snake graph $\calg_1$ at the overlap $\calg.$
\end{thm}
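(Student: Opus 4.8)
The plan is to reduce the self-crossing statement to the crossing case of Theorem~\ref{smoothing1}, using that smoothing is a purely local operation near the self-crossing point and that, by Theorem~\ref{thm cross}(b), the self-crossing of $\gamma_1$ in the overlap corresponds exactly to the crossing self-overlap $\calg_1[s,t]\cong\calg_1[s',t']$. First I would fix a parametrization $\gamma_1=\gamma_1(r)$ with the self-crossing at $x=\gamma_1(r_1)=\gamma_1(r_2)$, $r_1<r_2$, so that the passage near $r_1$ realizes $\calg_1[s,t]$ and the passage near $r_2$ realizes $\calg_1[s',t']$. Since the local move $\times\mapsto\smile\frown$ (resp. $\supset\subset$) alters $\gamma_1$ only in a neighbourhood of $x$, the smoothed curves coincide with $\gamma_1$ away from the overlap, and the whole analysis localizes to the region indexed by $\calg_1[s,t']$ and its isotopy reduction.

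The key geometric step I would establish is a \emph{doubling principle}. Take a second copy $\gamma_2$ of $\gamma_1$, perturbed so that $\gamma_1,\gamma_2$ meet transversally only at $x$, with $\gamma_1(r_1)=\gamma_2(r_2)=x$; then $\calg_{\gamma_2}=\calg_1$ and the two arcs cross in the overlap $\calg$. By Theorem~\ref{smoothing1} the four arcs obtained by smoothing this genuine crossing have labeled snake graphs forming $\res_\calg(\calg_1,\calg_{\gamma_2})$. Identifying the two copies (folding $\gamma_1\sim\gamma_2$) sends the $\smile\frown$-resolution to the shortcut arc together with the closed loop of the self-crossing smoothing, and the $\supset\subset$-resolution to the remaining single arc. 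On the combinatorial side the same identification turns $\calg_4=\calg_2[1,t']\cup\calg_1[t+1,d]$ into the band graph $\calg_4^\circ=\calg_1^b[s,s'-1]$, because its two free ends are glued once the copies coincide; it keeps $\calg_3=\calg_1[1,t]\cup\calg_1[t'+1,d]$ a genuine snake graph; and it collapses the symmetric-difference pair $\calg_5\sqcup\calg_6$ into the single, possibly negative, graph $\calg_{56}$. Matching these with Definition~\ref{resolution self-crossing} settles the same-direction case, and the opposite-direction case follows in the same way with the roles of the band graph and the lone snake graph exchanged, yielding $\calg_{34}$ and $\calg_6^\circ$ as in Definition~\ref{resolution self-crossing in opposite direction}.

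The main obstacle is precisely the intersecting self-overlap $s'\le t+1$ flagged in the introduction: here $\calg_1[s,t]$ and $\calg_1[s',t']$ share edges, the two copies cannot be separated, so the doubling argument breaks down and the smoothed curves genuinely wind around and must be isotoped into minimal position (cf. Remark~\ref{rem smoothing}). For this I would use the puncture technique of Section~\ref{sect homotopy}: introduce a puncture in a triangle crossed by both strands so that, in the punctured surface $(\dot S,\dot M)$, the two passages separate into a transversal crossing to which Theorem~\ref{smoothing1} applies; then remove the puncture and propagate the result through the three homotopy patterns of Section~\ref{sect homotopy} according to whether the lifted arc crosses one, two or three of $a,b,c$. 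The content is that the tile-deletion-and-regluing rules recorded there are exactly the ``unwinding'' isotopy, and I would verify that they reproduce each subcase of the resolution: $s'\le t$ (Figure~\ref{fig(s'<t)}); $s'=t+1$, where the sign and truncation of $\calg_{56}$ are governed by the secondary overlap $j_1(\calh),j_2(\calh)$ being crossing or not (Figure~\ref{fig(s'=t+1)}); and the endpoint degeneracies $s=1$ or $t'=d$, where $\calg_{56}$ is cut by $\setminus\pred$ or $\setminus\suc$, or collapses to a single edge or to $0$ (Figures~\ref{fig(s'>t)}, \ref{fig(s'>t)2}). Confirming that the geometric realizations drawn in the punctured disk and the annulus in those figures are the actual minimal-position smoothings would complete the argument.
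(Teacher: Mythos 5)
Your proposal breaks down at both of its key steps, and in a way that inverts what actually works. First, the ``doubling principle'' is not available: the two copies $\gamma_1,\gamma_2$ are isotopic arcs, so they cannot be perturbed to meet transversally in a single point $x$ — near the self-crossing of $\gamma_1$, two parallel copies necessarily cross each other \emph{twice}, and each copy retains its own self-crossing, so Theorem~\ref{smoothing1} applied at one mutual crossing leaves a configuration with residual crossings that cannot simply be ``folded.'' Moreover, the folding $\gamma_1\sim\gamma_2$ is not a well-defined operation on curves or on snake graphs, and the claimed correspondence fails concretely: smoothing a crossing of two arcs never produces a closed loop, whereas smoothing a self-crossing in the same direction does; on the combinatorial side, $\calg_4=\calg_2[1,t']\cup\calg_1[t+1,d]$ has $t'+d-t$ tiles while $\calg_4^\circ=\calg_1^b[s,s'-1]$ has $s'-s=t'-t$ tiles, so no identification of the two copies can turn one into the other. (Under identification, the arc corresponding to $\calg_4$ becomes an arc traversing the middle segment of $\gamma_1$ twice, not the loop.) What the paper actually does in the non-intersecting same-direction case, and in the opposite-direction case, is different from doubling: it introduces a puncture \emph{at an interior point of $\gamma_1$} lying between the two passages (on the segment between the $t$-th and $(t+1)$-st crossings), thereby splitting $\gamma_1$ into two genuinely distinct sub-arcs $\gamma_{11},\gamma_{12}$ of the punctured surface; Theorem~\ref{smoothing1} is applied there, and the puncture is then removed using the homotopy analysis of section~\ref{sect homotopy}. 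Splitting the curve at a point is not the same as taking two copies of it.

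Second, you reserve the puncture technique for the intersecting case $s'\le t$, but that is precisely where it cannot work: when $s'\le t$ the two passages $\gamma_1[s,t]$ and $\gamma_1[s',t']$ share the actual curve segment corresponding to the crossings $s',\dots,t$, so there is no point on $\gamma_1$ separating the two passages, and hence no puncture placement splits $\gamma_1$ into two sub-arcs whose mutual crossing realizes the given overlap. The paper handles this case by an entirely different, direct argument: it first shows that for $s'\le t$ the two overlap sequences must be \emph{equal} (not reversed), which forces the middle segment to close up into a non-contractible loop, so that $\gamma_1$ spirals as in Figure~\ref{fig shape} (essentially a bracelet $\Brac_k\zeta$ with end segments attached); it then computes the two smoothings explicitly from this global form — one smoothing yields a loop together with a generalized arc with fewer self-crossings, the other yields a generalized arc with a \emph{kink}, and it is the kink (via $x_\zg=-x_{\overline\zg}$) that produces the minus sign on $\calg_{56}$ in the resolution. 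Your proposal never engages with this spiral structure or with the origin of the sign, which is the real content of the intersecting case; the three tile-deletion patterns of section~\ref{sect homotopy} concern crossings with the arcs $a,b,c$ at a puncture and do not encode this unwinding.
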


\begin{proof}
{
We start with the case where the overlap is in the same direction and $s'\ge t+1$. 
Let $\Delta$ be the triangle 
in the surface which contains the segment of $\gamma_1$ between the $t$-th and the $(t+1)$-st crossing point. We introduce a puncture $p$ on this segment of $\zg_1$ and in the interior of $\zD$, see Figure~\ref{fig puncture}, {where the triangulation arcs are black and the arc $\zg$ is red.} We complete $T$  to a triangulation $\dot T$ by adding three arcs $a,b,c$ from the puncture $p$ to the vertices of $\zD$. 
Let $\gamma_{11}$ be the segment of $\gamma_1$ up to the puncture $p$, and let $ \gamma_{12}$ the segment of $\gamma_1$ after   $p$. 
\begin{figure}\begin{center}
\scalebox{0.8}{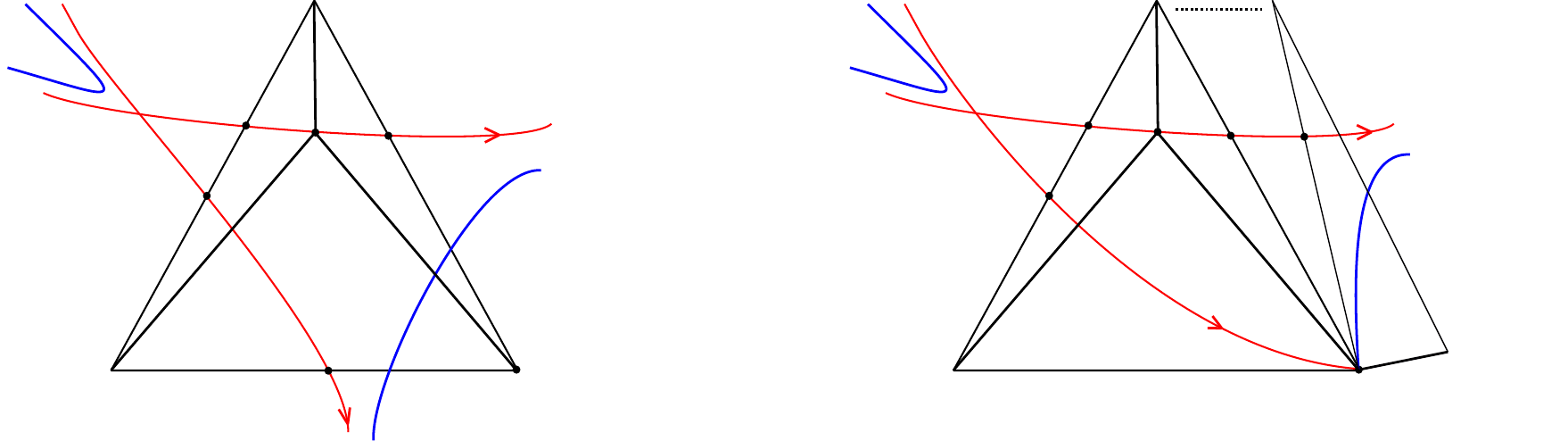}
\caption{Introducing a puncture}\label{fig puncture}\end{center}
\end{figure}

On the other hand, consider the snake graphs $\calg_{11}=\calg_1[1,t]$ and $\calg_{12}=\calg_1[t+1,d].$
Observe that these snake graphs do not necessarily correspond to snake graphs of $\gamma_{11}$ and $\gamma_{12}$, since the arc $\gamma$ might run through the triangle $\Delta$ several times, and introducing a puncture in the surface might create crossings with the new arcs in $\dot T \backslash T$. Then the snake graphs $\widetilde{\calg}_{11}, \widetilde{\calg}_{12}$ corresponding to $\gamma_{11}$ and $\gamma_{12}$ will be obtained from $\calg_{11}$ and $\calg_{12},$ respectively, by inserting single tiles which correspond to these new crossings.

Suppose first that $t'<d$. 
Then the triangle $\zD$ has sides  $\tau_{i_t}=\tau_{i_{t'}},\tau_{i_{t+1}},\tau_{i_{t'+1}}$ and $i_{t+1}\ne i_{t'+1}$, since we have an overlap.
Smoothing the self-crossing of $\zg_1$ is the same as smoothing the corresponding crossing of the two curves $\zg_{11}$ and $\zg_{12}$ and then removing the puncture again. This will produce a pair $(\zg_3,\zg_4)$, where $\zg_3 $ is an arc and $\zg_4$ is a loop, and an arc $\zg_{56}$ which crosses $\tau_{i_{t+1}}$ and $\tau_{i_{t'+1}}$. {In Figure~\ref{fig puncture}, the arc $\zg_{56}$ is the blue one. }In terms of snake graphs, $\res_{\calg}(\calg_1)$ is obtained from $\res_{\calg}(\widetilde{\calg}_{11}, \widetilde{\calg}_{12} )$ by removing the tiles corresponding to the crossings with the arcs at the puncture and glueing. The graphs $\calg_3$ and $\calg_4^\circ$ are glued along the edge labeled $\tau_{i_{t'+1}}$ and correspond to $\zg_3$ and $\zg_4$. The graph $\calg_{56}$ is glued along the edge labeled $\tau_{i_t}$ and corresponds  to $\zg_{56}$.

Now suppose that $t'=d$. 
Then the triangle $\zD$ has sides  $\tau_{i_t}=\tau_{i_{t'}},\tau_{i_{t+1}},\tau_{e_t}$, {see the right hand side of Figure~\ref{fig puncture}}. For  $\zg_3$ and $\zg_4 $ the proof is exactly as above. For $\zg_{56}$, there is a slight difference. Now smoothing the crossing of the two arcs $\zg_{11}$ and $\zg_{12}$ will produce an arc $\zg_5$ starting at $p$ and then crossing $\tau_{i_{t+1}}$, as well as an arc $\zg_6=c$ of the triangulation $\dot T\setminus T$ from the puncture $p$ to the common endpoint $v$ of $\tau_{i_{t+1}}$ and $\tau_{e_t}$. Removing the puncture will then produce an arc $\zg_{56}$  by glueing the two arcs $\zg_5$ and $\zg_6$. Thus $\zg_{56}$ starts at $v$, follows $c$ and then follows $\zg_{12}$, but because of isotopy, this arc will not cross the arcs in the fan of arcs $\tau_{i_{t+1}},\tau_{i_{t+2}},\ldots,\tau_{i_{\ell}}$ that are crossed by $\zg_{12}$ and incident to $v$. This situation is exactly reflected in the definition of the snake graph $\calg_{56}=\calg_{56}'\setminus\suc(e')$, since $e'$ is the last interior edge such that $f_{56}(e')=f_{56}(e_t)$.

When the overlap is in the opposite direction, the proof is similar.}

It remains the case where the overlap is in the same direction and $s'\le t$. Thus the self-overlap has an intersection $\calg_1[s',t]$.

Let $\tau_{i_1},\tau_{i_{2}},\ldots,\tau_{i_s}$ be the sequence of arcs of the triangulation crossed by $\gamma_1$ in order. 
By the definition of overlap, we have that the sequences $\tau_{i_s},\tau_{i_{s+1}},\ldots,\tau_{i_t}$ and
$\tau_{i_{s'}},\tau_{i_{s'+1}},\ldots,\tau_{i_{t'}}$
are  equal or opposite to each other. Since $s'<t$ it follows that they have to be equal, since otherwise the segment of $\gamma$ crossing $\tau_{i_{s'+1}},\tau_{i_{s'+2}},\ldots,\tau_{i_{t}}$ would be isotopic to a curve not crossing these arcs at all, see Figure \ref{homotopy2}.

\begin{figure}\begin{center}
\scalebox{1}{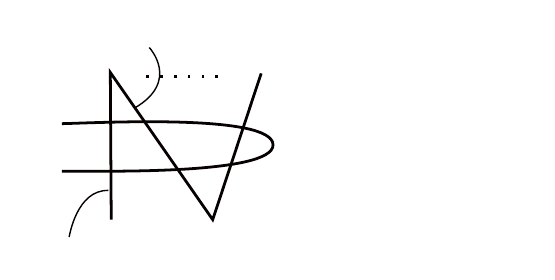}
\caption{Proof of Theorem \ref{thm430}}\label{homotopy2}\end{center}
\end{figure}

Let $\gamma[s,t]$ and $\gamma[s',t']$ be the segments of $\gamma$ corresponding to the overlaps and let $\gamma[s',t]$ be their common subsegment corresponding to the intersection of the overlaps.

Since the 
sequences $\tau_{i_s},\tau_{i_{s+1}},\ldots,\tau_{i_t}$ and
$\tau_{i_{s'}},\tau_{i_{s'+1}},\ldots,\tau_{i_{t'}}$
are  equal, it follows that the curves $\gamma[s,t]$ and $\gamma[s',t']$ run parallel before and after their crossing at $p$. Moreover, since $s'<t$, the following sequences  are equal as well:
\[\begin{array}
 {ll}
 \tau_{i_s},\tau_{i_{s+1}},\ldots,\tau_{i_{s'-1}}\\
 \tau_{i_{s'}},\tau_{i_{s'+1}},\ldots,\tau_{i_{2s'-s-1}}\\
 \tau_{i_{2s'-s}},\tau_{i_{2s'-s+1}},\ldots,\tau_{i_{3s'-2s-1}}\\
 \ldots \tau_{i_{t}}\\
\end{array}
\]
Thus the curve $\gamma[s,s']$ after identifying its endpoints is a closed and non-contractible curve. This implies that the curve $\gamma[s,t]$ is of the form as in Figure \ref{fig shape}, where points with equal labels $a,b,c,d,e$ are identified. The crossing point $p$ can be any of the points labeled 1,2,3,4,5. For example, if $p$ is the point labeled 5,4,3,2,1 respectively, then the crossing point $s'$ at the beginning of the second overlap must be the point on $\tau_{i_s}$ crossed by $\gamma$ after the point $e,d,c,b,a$ respectively, and the crossing point $t$  at the end of the first overlap must be  the point on $\tau_{i_{t'}}$ first crossed by $\gamma$ after passing through the point $a,b,c,d,e$ respectively.

\begin{figure}\begin{center}
\scalebox{1}{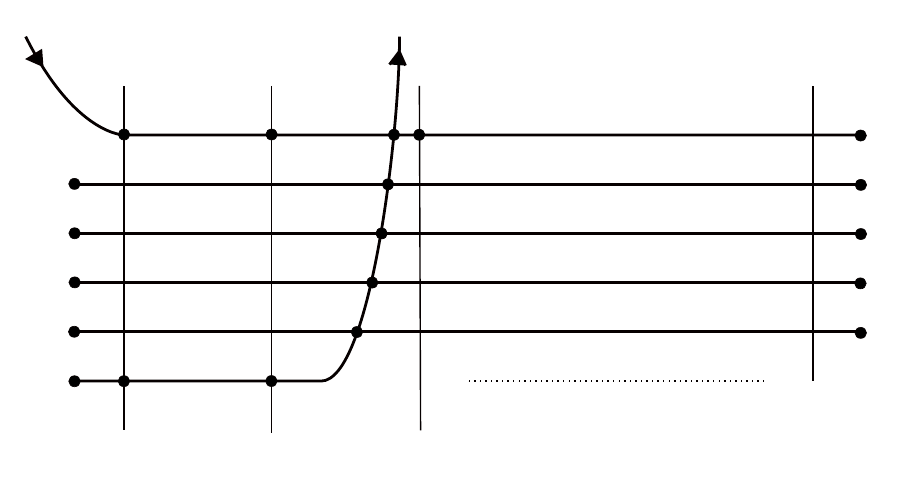}
\caption{Proof of Theorem\ref{thm430}.
This situation can arise in any non-simply connected surface. The curve $\gamma$ between the points 5 and 5 is the concatenation of an essential loop $\zeta$ with itself 5 times, thus $\textup{Brac}_5\zeta$.}\label{fig shape}\end{center}
\end{figure}

Therefore the condition $s'\le t$ implies that in the example in Figure \ref{fig shape} the point $p$ must be  the point 2 or 1.
We now study the smoothing of these self-crossings.

If $p=1$  then the smoothing at $p$ will produce the two multicurves $$\{\zeta, \textup{generalized arc with 4 self-crossings at 2,3,4,5}\},$$
 and
  $$\{ \textup{generalized arc with  self-crossings at 3,4,5 and a kink at 2} \}.$$

If $p=2$ then the smoothing at $p$ will produce the two multicurves 
$$\{\textup{Brac}_1\zeta, \textup{generalized arc with 3 self-crossings at 3,4,5}\},$$
 and
  $$\{ \textup{generalized arc with  self-crossings at 4,5 and a kink at 3} \}.$$
  
Again we conclude that the snake graphs of the arcs (and bands) obtained by smoothing the self-crossing of $\gamma$ are given by $\res_{\calg}(\calg_1).$
\end{proof}

So far, we have considered two arcs which cross with a non-empty local overlap. Now we study two arcs which cross with an empty local overlap. The following result has been shown in \cite{CS}.

\begin{thm}\cite[Theorem 5.7]{CS} \label{smoothing2} Let $\gamma_1$ and $\gamma_2$ be two arcs which cross in a triangle $\Delta$ with an empty local overlap, and let $\calg_1$ and $\calg_2$ be the corresponding snake graphs. Assume $\Delta=\Delta'_0$ is the first triangle $\gamma_2$ meets. Then the snake graphs of the four arcs obtained by smoothing the crossing of $\gamma_1$ and $\gamma_2$ in $\Delta$ are given by the resolution $\gt s{\delta_3}12$ of the grafting of $\calg_2$ on $\calg_1$ in $G_s,$ where $0 \leq s \leq d$ is such that $\Delta=\Delta_s$ and if $s=0$ or $s=d$ then $\delta_3$ is the unique side of $\Delta$ that is not crossed by neither $\gamma_1$ nor $\gamma_2.$
\end{thm}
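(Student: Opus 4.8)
The plan is to analyze directly the local geometry of the transversal crossing of $\gamma_1$ and $\gamma_2$ in the triangle $\Delta=\Delta_s$ and to match each of the four smoothed arcs with one of the snake graphs produced by $\graft_{s,\delta_3}(\calg_1,\calg_2)$. Since the local overlap is empty, $\gamma_1$ and $\gamma_2$ share no common run of crossed triangulation arcs at $\Delta$: they meet at a single interior point $x$, and by hypothesis $\Delta$ is the first triangle of $\gamma_2$, so the initial segment of $\gamma_2$ from its starting marked point $v$ to $x$ lies entirely inside $\Delta$. First I would fix compatible orientations, record the crossing sequences $\tau_{i_1},\ldots,\tau_{i_d}$ of $\gamma_1$ and $\tau_{i'_1},\ldots,\tau_{i'_{d'}}$ of $\gamma_2$, and observe that the two planar smoothings $C_+,C_-$ reconnect the four ends at $x$ into the two pairs $\{\gamma_3,\gamma_4\}$ and $\{\gamma_5,\gamma_6\}$, one pair for each summand of the resolution.

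The two \emph{concatenation} arcs form the easy half. The arc $\gamma_3$ follows $\gamma_1$ from its start up to $x$ and then continues along all of $\gamma_2$; reading off its crossing sequence and applying the tile-gluing construction of Section~\ref{sect graph} gives $\calg_{\gamma_3}=\calg_1[1,s]\cup\calg_2$, glued along the side $\delta_3$ of $\Delta$ and the corresponding first edge $\delta_3'$ of $\calg_2$ as in Figure~\ref{figglue}. This is exactly the definition of $\calg_3$. Symmetrically, $\gamma_6$ follows $\gamma_2$ reversed from its end back to $x$ and then the tail of $\gamma_1$, yielding $\calg_{\gamma_6}=\ocalg_2[d',1]\cup\calg_1[s+1,d]=\calg_6$.

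The heart of the proof, and the step I expect to be the main obstacle, is the pair of \emph{truncation} arcs $\gamma_4,\gamma_5$, whose snake graphs are claimed to be the sub-snake-graphs $\calg_1\setminus\pred(e)$ and $\calg_1\setminus\suc(e)$. Geometrically $\gamma_4$ is the concatenation of the stub $\gamma_2[v,x]$ with $\gamma_1[x,\mathrm{end}]$: an arc starting at the vertex $v$, dipping into $\Delta$, and then running parallel to the tail of $\gamma_1$. The difficulty is that this arc is not in minimal position with respect to $T$; one must isotope it, pulling its start point $v$ taut, which removes exactly the crossings with the fan of triangulation arcs incident to $v$ that sit on one side of $\gamma_1$. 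I would control this unwinding using Remark~\ref{rem sign}: the sign function records on which side of the segments of $\gamma_1$ each uncrossed triangle side $\tau_{e_j}$ lies, so the fan the isotopy sweeps across is precisely the initial run of tiles whose associated edges have sign equal to $f_1(\delta_3)$, and the isotopy terminates at the first sign change. This is exactly the combinatorial rule $\calg_1\setminus\pred(e)$ with $e$ the first edge in $\Int(\calg_1[s+1,d])\cup\calgNE_1$ satisfying $f_1(e)=f_1(\delta_3)$; the argument for $\gamma_5$ and $\calg_1\setminus\suc(e)$ is mirror-symmetric. This ``remove tiles up to the next sign change'' phenomenon is the same homotopy effect catalogued in Section~\ref{sect homotopy}.

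Finally I would treat the boundary case $s=d$ (and symmetrically $s=0$), where $\Delta$ is the last triangle met by $\gamma_1$ and $\delta_3$ is forced to be the side of $\Delta$ crossed by neither arc. Here the tail $\gamma_1[x,\mathrm{end}]$ is an empty excursion to a vertex of $\Delta$, so $\gamma_4$ collapses to the single boundary segment along $\delta_3$, giving $\calg_4=\{\delta_3\}$ exactly as in Case~2 of the grafting definition, while $\gamma_6$ degenerates to $\gamma_2$ re-rooted at the shared endpoint and hence unwound by a fan, matching $\calg_6=\calg_2\setminus\pred(e)$. Assembling the four identifications $\calg_{\gamma_i}=\calg_i$ for $i=3,4,5,6$ and pairing $\{\gamma_3,\gamma_4\}$ and $\{\gamma_5,\gamma_6\}$ with the two summands of $\graft_{s,\delta_3}(\calg_1,\calg_2)$ completes the proof.
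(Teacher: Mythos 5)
You should first note that this paper does not actually prove Theorem \ref{smoothing2}: it is imported verbatim from \cite[Theorem 5.7]{CS}, so the only meaningful comparison is with the proof given there, which, like your plan, proceeds by direct local analysis of the smoothing -- two of the four smoothed curves are concatenations whose snake graphs are the glued graphs $\calg_3$ and $\calg_6$, while the two curves ending at the marked point $v$ must be pulled tight across the fan of arcs of $T$ incident to $v$, which on the snake graph side is exactly the truncation $\calg_1\setminus\pred(e)$, respectively $\calg_1\setminus\suc(e)$, at the first sign change. Your plan reproduces this architecture, including the correct pairing of the four curves with the two summands of the resolution and the correct treatment of the degenerate cases $s\in\{0,d\}$.

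There is, however, one genuine gap. For $\gamma_3$ and $\gamma_6$ you ``read off the crossing sequence'' of the concatenated curve and apply the tile-gluing construction; this presupposes that these concatenations are already in minimal position with respect to $T$, which is not automatic -- a bigon straddling the smoothing point $x$ could a priori exist, and this is precisely the step where the empty-overlap hypothesis must do work (you state the hypothesis at the outset but never invoke it anywhere in the argument). The missing verification runs as follows: an innermost bigon between $\gamma_3$ and some $\tau\in T$ cannot lie entirely inside the $\gamma_1$-piece or the $\gamma_2$-piece (it would then be a bigon for a curve in minimal position), so it must contain $x$; since arcs of $T$ are pairwise disjoint and cannot terminate in the interior of the bigon, every arc of $T$ entering the bigon through the $\gamma_1$-side must leave through the $\gamma_2$-side, and the innermost such arcs on the two sides must coincide, forcing $\tau_{i_s}=\tau_{i'_1}$; but this produces a one-tile overlap of $\calg_1$ and $\calg_2$ at the crossing, contradicting emptiness. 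The same argument handles $\gamma_6$. Separately, a sign slip in your prose: the tiles swept away when pulling $\gamma_4$ tight are those whose interior edges have sign \emph{opposite} to $f_1(\delta_3)$, and one cuts at the \emph{first} edge whose sign equals $f_1(\delta_3)$; you describe the swept run as having sign equal to $f_1(\delta_3)$, although the combinatorial rule you then quote is the correct one, so this is an inconsistency of wording rather than of conclusion.
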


For self-crossing arcs with empty local overlap, we have the following result, see Figure~\ref{fig selfgrafting}.

\begin{thm}\label{smoothingselfgrafting} Let $\gamma_1$  be a generalized arc which has a self-crossing in a triangle $\Delta$ with an empty local overlap, and let $\calg_1$ be the corresponding snake graph. Thus $\Delta=\Delta_0$ is the first triangle $\gamma_1$ meets and  $\Delta=\Delta_s$ is met again after $s$ crossings. Then the snake graphs of the {two arcs and the band graph of the loop} obtained by smoothing the self-crossing of $\gamma_1$ in $\Delta$ are given by the resolution $\graft_{s,\e_3} (\calg_1)$ of the self-grafting of $\calg_1$  in $G_s,$ and if  $s=d$ then $\delta_3$ is the unique side of $\Delta$ that is not crossed by  {$\gamma_1$.}
\end{thm}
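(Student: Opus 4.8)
The plan is to reduce this statement to the grafting result for a genuine crossing of two distinct arcs, Theorem~\ref{smoothing2}, by the puncture-introduction technique used in the proof of Theorem~\ref{thm430}. The self-crossing of $\gamma_1$ in $\Delta$ will be presented as a crossing of two arcs obtained by cutting $\gamma_1$, and the self-grafting resolution $\graft_{s,\e_3}(\calg_1)$ will be recovered from the ordinary grafting resolution after the puncture is removed.

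First I would introduce a puncture $p$ in the interior of $\Delta$, on $\gamma_1$ near the self-crossing point, and complete $T$ to a triangulation $\dot T$ of the punctured surface by adjoining three arcs $a,b,c$ from $p$ to the vertices of $\Delta$. Cutting $\gamma_1$ at $p$ produces two arcs $\gamma_{11}$ and $\gamma_{12}$, one of which has $\Delta$ as its first triangle, and these cross in the subdivided triangle $\Delta$ with an empty local overlap precisely because the original self-crossing did. Writing $\widetilde\calg_{11},\widetilde\calg_{12}$ for the snake graphs of $\gamma_{11},\gamma_{12}$ with respect to $\dot T$, Theorem~\ref{smoothing2} identifies the snake graphs of the four arcs obtained from smoothing the crossing with the grafting resolution $\graft_{s',\e_3}(\widetilde\calg_{11},\widetilde\calg_{12})$ for the appropriate position $s'$ and edge $\e_3$; in the boundary case one is in the grafting Case~2, with $\e_3$ the unique side of $\Delta$ crossed by neither half.

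Next I would remove the puncture. By the analysis of section~\ref{sect homotopy}, the snake graphs $\widetilde\calg_{11},\widetilde\calg_{12}$ differ from the subgraphs $\calg_1[1,s]$ and $\calg_1[s+1,d]$ only by insertion of single tiles recording the new crossings with $a,b,c$, and deleting the puncture removes exactly these tiles and reglues. The heart of the argument is to check that this deletion-and-regluing carries $\graft_{s',\e_3}(\widetilde\calg_{11},\widetilde\calg_{12})$ onto the self-grafting resolution $(\calg_3\sqcup\calg_4^\circ)+\calg_{56}$ of section~\ref{self-grafting}. The new feature, absent when grafting two genuinely distinct arcs, is that $\gamma_{11}$ and $\gamma_{12}$ are halves of one curve; this forces the edge identification $\e_3\sim\e_3'$ that closes up the component of the grafting into the band graph $\calg_4^\circ=(\calg_1[1,s])^{\e_3}$, while the remaining graphs become the arc $\calg_3=\calg_1\setminus\pred(e)$ and, from $\ocalg_1[s,1]\cup\calg_1[s+1,d]$, the graph $\calg_{56}$ after the puncture tiles cancel. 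One then verifies that the sign-function conditions cutting $\calg_{56}'$ down to $\calg_{56}$ match the grafting conditions, using that the homotopy removing $p$ cancels tiles up to the next sign change. When $s=d$ this reproduces Case~2 of the self-grafting, in which $\calg_3=\{\e_3\}$, $\calg_4^\circ=\calg_1^{\e_3}$, and $\calg_{56}$ is the prescribed (possibly zero) difference graph of Figure~\ref{kroneckerBand}.

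The main obstacle I expect is this last matching: recognizing that the band graph $\calg_4^\circ$ and the identification $\e_3\sim\e_3'$ produced by removing the puncture coincide with those prescribed by the self-grafting definition. The gluing that creates the band is invisible in the grafting of two distinct arcs and materializes only once $\gamma_{11}$ and $\gamma_{12}$ are recognized as halves of a single loop, so the bookkeeping of gluing edges, their signs, and the degenerate possibility $\calg_{56}=0$ must be carried out with care.
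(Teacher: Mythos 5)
Your proposal follows exactly the paper's own argument: introduce a puncture on the segment of $\gamma_1$ between the two passages through the self-crossing point, so that cutting at the puncture turns the self-crossing into an empty-overlap crossing of two arcs, apply Theorem~\ref{smoothing2}, and then remove the puncture to recover the self-grafting resolution $\graft_{s,\e_3}(\calg_1)$. The paper states this in three sentences and leaves the bookkeeping implicit; your additional verification that the tile deletion-and-regluing (including the band-graph identification $\e_3\sim\e_3'$ and the degenerate case $\calg_{56}=0$) matches Definition~\ref{resolution self-grafting} is a correct elaboration of the same route, not a different one.
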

\begin{proof}
As in the proof of Theorem \ref{thm430}, we can introduce a puncture on the segment  of $\gamma_1$
between the two crossing points and complete to a triangulation. Then the two segments of $\gamma_1$ before and after the puncture still have the same crossing. We can use Theorem \ref{smoothing2} to resolve that crossing and then remove the puncture to get the desired resolution.
\end{proof}

\begin{figure}
\begin{center}
\scalebox{0.8}{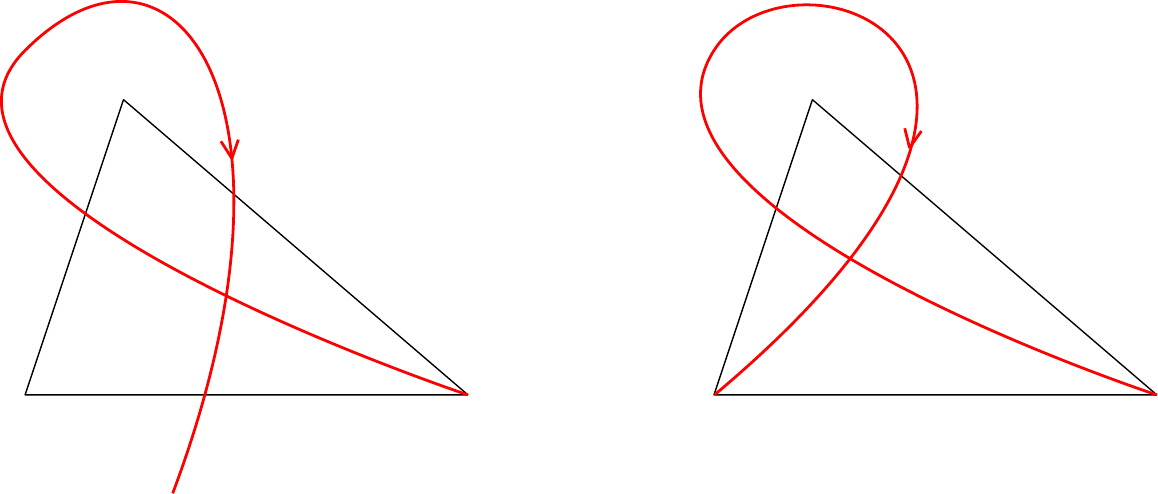}
\caption{Generalized arcs crossing with an empty overlap. On the left $s<d$, and  on the right  $s=d$.}
\label{fig selfgrafting}
\end{center}
\end{figure}


\section{Snake graph calculus for cluster algebras}\label{sect 7}
{In this section, we show that we can use snake graph calculus to make explicit computations in the cluster algebras from unpunctured surfaces. In particular, we give a new proof of the skein relations.}
\subsection{Non-empty overlaps}

If $\calg$ is a snake graph associated to an arc $\gamma$ in a triangulated surface $(S,M,T)$ then each tile of $\calg$ corresponds to a quadrilateral in the triangulation $T,$ and we denote by $\tau_{i(G)}\in T$ the diagonal of that quadrilateral. With this notation we define
\begin{align*}
 x(\calg) = \prod_{ G \mbox{ tile in } \calg} x_{i(G)}\\
 y(\calg) = \prod_{ G \mbox{ tile in } \calg} y_{i(G)}
\end{align*}
If $\calg=\{\tau\}$ consists of a single edge, we let  $x(\calg)=1$ and $y(\calg)=1$.

Let $\gamma_1$ and $\gamma_2$ be two arcs which cross with a non-empty  overlap. Let $x_{\gamma_1}$ and $x_{\gamma_2}$ be the corresponding cluster variables and $\calg_1$ and $\calg_2$ the snake graphs with corresponding overlap $\calg.$ 
Recall that $\re 12$ consists of two pairs $(\calg_3,\calg_4)$ and $(\calg_5,\calg_6)$ of snake graphs. 
The number of tiles in $(\calg_3,\calg_4)$ is equal to the number of tiles in $(\calg_1,\calg_2)$, whereas the number of tiles in $(\calg_5,\calg_6)$ is strictly smaller, since this pair does not contain the overlaps.

Define 
$\tcalg$  to be the union of all tiles in $\calg_1 \sqcup \calg_2$ which are not in $\calg_5 \sqcup \calg_6.$

Similarly, if $\gamma_1$ is a self-crossing arc with non-empty local overlap, let $x_{\gamma_1}$ be the corresponding Laurent polynomial and $\calg_1$ be  the snake graph with corresponding self-overlap $\calg.$ In this situation,  if the self-overlap is in the same direction, then the resolution of the crossing consists of a pair $(\calg_3,\calg_4^\circ)$ of a snake and a band graph and a snake graph $\calg_{56}$;
{whereas if the self-overlap is in the opposite direction, then  the resolution of the crossing consists of a snake graph $\calg_{34}$ and a pair $(\calg_5,\calg_6^\circ)$ of a snake and a band graph. If the self-overlap is in the same direction,} the number of tiles in $(\calg_3,\calg_4^\circ)$ is equal to the number of tiles in $\calg_1$, whereas the number of tiles in $\calg_{56}$ is strictly smaller. {On the other hand, if the overlap is in the opposite direction, then the number of tiles in $\calg_{34}$ is equal to the number of tiles in $\calg_1$, whereas the number of tiles in $\calg_5\cup\calg_6^\circ$ is strictly smaller.}

Also in this case, define
$\tcalg$  to be the union of all tiles in $\calg_1 $ which are not in $\calg_{56}$ {or $\calg_5\sqcup\calg_6^\circ$.}

In all cases, under the bijections of section \ref{sect 3} the matchings $P_{56} $ of $\calg_5\sqcup\calg_6$ (respectively $\calg_{56}$ {or $\calg_5\sqcup\calg_6^\circ$}) are completed to matchings of $\calg_1\sqcup\calg_2$ (respectively $\calg_1$ or $\calg_3\sqcup\calg_4^\circ $) in   a unique way which does not depend on $P_{56}$.  
Moreover, the $y$-monomial of the completion is maximal on a connected subgraph of $\tcalg$ and trivial on its complement.
We  denote by $\tcalg_{max}$ the component on which the $y$ monomial is maximal.

 Let $\re 12$ be the resolution of the crossing of $\calg_1$ and $\calg_2$ at $\calg$ and  $\ree 1$ the resolution of the self-crossing of $\calg_1$  at $\calg$. 
Define the {\em Laurent polynomial of the resolutions} by

\begin{equation}\label{laurent12}
 \call (\re 12) = \call (\calg_3 \sqcup \calg_4) + y( {\tcalg_{max}}) \call (\calg_5 \sqcup \calg_6),
\end{equation}
and

\begin{equation} \label{laurentself}
 \call (\ree 1) = \left\{
\begin{array}
 {ll}
\call (\calg_3 \sqcup \calg_4^\circ) + y( {\tcalg_{max}}) \call (\calg_{56}), &\textup{if the self-overlap is in the same direction;}\\ \\
\call (\calg_{34}) + y( {\tcalg_{max}}) \call (\calg_{5}\sqcup\calg_{6})&\textup{if the self-overlap is in the opposite direction,}\\
\end{array}\right.
\end{equation}
where 

\begin{equation*}
 \call (\calg_k ) = \frac{1}{x(\calg_k)} \sum_{P \in \match(\calg_k)} x(P) y(P),  \quad\textup{if $\calg_k$ is positive;}
\end{equation*}
and 
\[ \call(\calg_k)=-\call(-\calg_k), \quad\textup{if $\calg_k$ is negative.}
\]
\begin{thm} \label{laurent}
\begin{enumerate}
\item 
 Let $\gamma_1$ and $\gamma_2$ be two arcs which cross with a non-empty local overlap and let $\calg_1$ and $\calg_2$ be the corresponding snake graphs with local overlap $\calg.$ Then
 
\begin{equation*}
 \call(\calg_1 \sqcup \calg_2) = \call (\re 12)
\end{equation*}

\item
 Let $\gamma_1$ be a self-crossing generalized  arc   a non-empty local overlap and let $\calg_1$ be the corresponding snake graph with local overlap $\calg.$ Then
 
\begin{equation*}
 \call(\calg_1 ) = \call (\ree 1)
\end{equation*}

\end{enumerate}\end{thm}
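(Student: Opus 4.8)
The plan is to deduce both identities from the perfect-matching bijections of Section~\ref{sect 3} by checking that these bijections are compatible with the weight $x(P)\,y(P)$, up to the single global monomial $y(\tcalg_{max})$. Recall that for each snake or band graph $\calg_k$ occurring in a resolution one has $\call(\calg_k)=\frac{1}{x(\calg_k)}\sum_{P}x(P)\,y(P)$, so that $\call$ is a normalized generating function over $\match\calg_k$. Since $\match(\calg_1\sqcup\calg_2)=\match\calg_1\times\match\calg_2$ and both $x$ and $y$ are multiplicative, one has $\call(\calg_1\sqcup\calg_2)=\call(\calg_1)\call(\calg_2)$, and likewise every term $\call(\calg_i\sqcup\calg_j)$ on the right of \eqref{laurent12}--\eqref{laurentself} is such a normalized sum. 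Thus both parts reduce to a term-by-term comparison of weights across the bijection $\varphi$ of Theorem~\ref{bijections}(1) (for part~(1)) and of Theorem~\ref{thm 3.7} and its analogues from Sections~\ref{sect 3.3}--\ref{sect s'=t+1} (for part~(2)).

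First I would split $\match\calg_1$ (respectively $\match(\calg_1\sqcup\calg_2)$, together with the copy of $\match\calg_{56}$ sitting in the domain of $\varphi$ in the cases where $\calg_{56}$ or $\calg_5\sqcup\calg_6$ is negative in $\mathcal{R}$) into the matchings that admit a switching position in the overlap and those that do not. By construction $\varphi$ carries the former bijectively onto $\match(\calg_3\sqcup\calg_4)$ (respectively $\match(\calg_3\sqcup\calg_4^\circ)$ or $\match\calg_{34}$) and the latter onto $\match(\calg_5\sqcup\calg_6)$ (respectively $\match\calg_{56}$). On the overlap-containing part the tiles of the resolution agree, with multiplicity, with the tiles of the original graph, so the crossing monomials coincide, and the switching operation merely redistributes the edges of $P$ between the two resolved graphs; hence $x(P)=x(P_3)x(P_4)$ there and the normalized $x$-weights match. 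On the remaining part the crossing monomial $x(\calg_5\sqcup\calg_6)$ is the quotient of $x(\calg_1\sqcup\calg_2)$ by $x(\tcalg)$, and the edges of the unique boundary-edge completion supply exactly the factor by which the two crossing monomials differ, so that after normalization only the promised $y$-factor survives.

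The heart of the argument is the behaviour of the height monomial $y(P)=\prod_{j\in J}y_{\tau_{i_j}}$, which is defined globally through the symmetric difference $P_-\ominus P$ with the minimal matching. For a matching with a switching position I would show that the first switching position cuts the cycles of $P_-\ominus P$ into pieces lying entirely in one resolved graph or the other, so that the enclosed set of tiles, and hence $y$, is preserved; the admissible local configurations at a switching position are exactly those tabulated in \cite[Section~3]{CS}, which makes this a finite check. For a matching without a switching position the restriction to the two copies of the overlap consists of boundary edges only and, by Lemma~\ref{lemNE}, is complementary; the forced completion on $\tcalg$ is then the matching that is maximal on $\tcalg_{max}$ and minimal on its complement, enclosing precisely the tiles of $\tcalg_{max}$. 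This yields $y(P)=y(\tcalg_{max})\,y(P_{56})$ and explains the factor $y(\tcalg_{max})$ in \eqref{laurent12} and \eqref{laurentself}.

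It then remains to reconcile signs and the band-graph cases. When $\calg_{56}$ (or $\calg_5\sqcup\calg_6$) is a negative snake graph in $\mathcal{R}$, its matchings enter through the domain of $\varphi$ rather than its codomain, and the convention $\call(\calg_k)=-\call(-\calg_k)$ turns the matching bijection into the corresponding signed identity; I would check that the four cases of Section~\ref{sect resolution} are exactly those for which $\varphi$ was defined. For the band graphs $\calg_4^\circ$ and $\calg_6^\circ$ the enclosed-tile count must be read through the good-matching/cut-edge description, using that the image of $\varphi$ lands in good matchings (Lemma~\ref{lem opp}). The main obstacle is precisely this global-to-local passage for the height monomial in the previous paragraph: one must verify that the switching operation never creates or destroys an enclosed tile away from the switching position, and that on $\tcalg$ the completion is forced to be maximal exactly on $\tcalg_{max}$. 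Once the local configurations of \cite{CS} are matched against Lemma~\ref{lemNE}, the remaining bookkeeping is routine.
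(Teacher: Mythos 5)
Your strategy coincides with the paper's own proof: both reduce the two identities to showing that the matching bijections of Section~\ref{sect 3} preserve the weight $x(P)\,y(P)$, with the switching-position dichotomy separating the overlap-carrying terms from the terms weighted by $y(\tcalg_{max})$, and with the convention $\call(\calg_k)=-\call(-\calg_k)$ accounting for the cases where $\match\calg_{56}$ (or $\match(\calg_5\sqcup\calg_6)$) sits in the domain of $\varphi$ rather than its codomain. Your argument that the forced boundary-edge completion of a non-switching matching is maximal precisely on $\tcalg_{max}$, so that its edges supply the factor $x(\tcalg)$ and its height monomial the factor $y(\tcalg_{max})$, is exactly the content of the paper's Lemma~\ref{lem 72}(b) (where $\varphi(P)=P\cup P_-\cup P_+$ with $x(P_-)x(P_+)=x(\calg_4^\circ)^2=x(\tcalg)$ and $y(P_+)=y(\tcalg_{max})$) and of the corresponding step in \cite[Theorem 6.1]{CS}.

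There is, however, one genuine gap. Your blanket claim that on the overlap-carrying part ``the switching operation merely redistributes the edges of $P$ between the two resolved graphs,'' so that weight preservation is a finite check against the local switching configurations tabulated in \cite{CS}, fails in the case $s'=t+1$ (same direction) when the secondary overlap $j_1(\calh), j_2(\calh)$ is non-crossing. For the special matching $P_{\tiny\smiley}$ of Section~\ref{sect s'=t+1}, the bijection is \emph{not} a redistribution of edges: $\varphi$ sends the boundary edge of $G_{s-1}^{N\!E}$ to the edge of $\calg_4^\circ$ corresponding to $e_{t'}$, i.e.\ one edge of $\calg_1$ is traded for a genuinely different edge of the resolution. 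Equality of the $x$-weights therefore requires the surface-geometric input that these two edges carry the same label --- since $s'=t+1$, the arcs $\tau_{i_s}$ and $\tau_{i_t}=\tau_{i_{t'}}$ are two sides of a single triangle $\Delta$ of $T$, hence $\tau_{i_{s-1}}=\tau_{i_{t'+1}}$ is its third side and both edges are labeled by $\tau_{i_s}$ --- and one must separately check that swapping the restrictions of $P_{\tiny\smiley}$ on the pieces coming from $j_1(\calh)$ and $j_2(\calh)$ leaves the height monomial unchanged. This verification, which occupies most of the paper's proof of part (2) outside the intersecting case $s'\le t$, cannot be extracted from the switching tables or from Lemma~\ref{lemNE}, and it is missing from your outline; the rest of your plan is sound.
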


\begin{proof} (1) This is \cite[Theorem 6.1]{CS}.  The essential step of the proof is to show that the switching operation of section \ref{switching} is weight preserving. {That is, if $\calg$ is a (union of) labeled snake and band graphs coming from an unpunctured surface,  $P\in\match \calg$, and $P'$ is obtained from $P$ by a switching operation, then $x(P)=x(P')$ and $y(P)=y(P')$.} 
Then, since the bijection $\varphi$ of section~\ref{sect 3.3} is defined using switching and restriction, it is also weight preserving. To finish the proof one needs to take care of the missing tiles in $\calg_5\sqcup\calg_6$ and show that the $y(\tcalg_{max})$ is absorbing this discrepancy. 

(2) As usual we use the notation $\calg_{1}[s,t] \cong \calg_{1}[s',t']$
for the overlap. In the case where $s'> t+1$, the proof is the exact analogue of the proof of (1).

If $s'=t+1$, we have two cases. Either the second overlaps $j_1(\calh)$ and $j_2(\calh)$ from the definition of the resolution in section \ref{sect resolution} cross or not. 
If they cross then the proof is the same as in the case $s'\le t$ below.
If they do not cross, then the proof is exactly analogue to the proof of (1), except that we need to check that  the bijection is weight preserving for the one special type of matching $ P_{\tiny\smiley}$ defined in section \ref{sect s'=t+1}.
Observe that on each piece of $\calg_3$, the definition of $\varphi(P_{\tiny\smiley})$ is given by restricting $P_{\tiny\smiley}$ to that piece. Moreover, on $\calg_4^\circ$, $\varphi(P_{\tiny\smiley})$ is also given by restricting $P_{\tiny\smiley}$ except for one edge, namely the boundary edge of $G_{s-1}^{N\!E}$ is mapped to the edge of $\calg_4^\circ$ that corresponds to the edge $e_{t'}$ of $\calg_1$. 

Thus
in order to show that $x(\varphi(P_{\tiny\smiley})) =x(P_{\tiny\smiley})$  
it suffices to show that the boundary edge of $G_{s-1}^{N\!E}$ has the same weight as the edge $e_{t'}$. 
Recall that $i_s=i_{s'}$, $i_t=i_{t'}$ and $i_{s-1}\ne i_{s'-1}$, $i_{t+1}\ne i_{t'+1}$, since we have an overlap.
Moreover, since we are in the case $s'=t+1$, it follows that $i_{s-1}\ne i_t$ and $i_s\ne i_{t'+1}$, and that $\tau_{i_s}$ and $\tau_{i_t}=\tau_{i_{t'}}$ are two distinct sides of a triangle $\Delta$ in the triangulation. Therefore $\tau_{i_{s-1}}=\tau_{i_{t'+1}}$ is the third side of $\Delta$.
Now the edge $e_{t'}$ is the edge shared by the tiles $G_{t'}$ and $G_{t'+1}$ and therefore its weight is given by the side of $\Delta$ different from $\tau_{i_{t'}}$ and $\tau_{i_{t'+1}}$, thus the weight of $e_{t'}$ is equal to $i_s$. On the other hand, the weight of the boundary edge of $G_{s-1}^{N\!E}$ is also equal to $i_s$ since $G_s$ is following $G_{s-1}$ in $\calg_1$.

To show that  $y(\varphi(P_{\tiny\smiley})) =y(P_{\tiny\smiley})$, observe that 
$y(\varphi(P_{\tiny\smiley})|_{\calg_4^\circ}) =y(P_{\tiny\smiley}|_{\calg_1[s',t']})$ by definition and that 
$y(\varphi(P_{\tiny\smiley})|_{\calg_3} =y(P_{\tiny\smiley}|_{\calg_1\setminus\calg_1[s',t'])}$ because even if the edges  on the parts that come from $j_1(\calh)$ and $j_2(\calh)$ are swapped, together they still create the same contribution to the $y$-monomial.

Suppose now that $s'\le t$.
Let 
\[
\begin{array}
 {rcl}
{\varphi } : \match \calg_1 \cup \match \calg_{56}&\longrightarrow 
& \match \calg_3\times \match \calg_4^\circ 
\\
P&\longmapsto &( \varphi_3(p),\varphi_4(p)) 
\end{array}\]
be the bijection defined in section \ref{sect 3.4}.

 First we note that
\begin{equation} \label{eq1}
  x(\calg_1 ) = x(\calg_3 \sqcup \calg_4^\circ) = x(\calg_{56}) x(\tcalg),
\end{equation}
where the first identity holds because $\calg_1 $ and $\calg_3 \sqcup \calg_4^\circ$ have the same set of tiles, and the second identity holds because $\tcalg$ consists of the tiles of $\calg_1  \backslash \calg_{56}$.
Using the equation (\ref{eq1}) on the definition of $\call(\calg_1)$ as well as the bijection $\varphi$, we obtain
\begin{align*}
 \call(\calg_3\sqcup\calg_4^\circ) = &\displaystyle \frac{1}{x(\calg_1)} \sum_{P \in \match
\calg_1} x(\varphi(P))y(\varphi(P))\\&+\displaystyle \frac{1}{x(\tcalg)x(\calg_{56})} \sum_{P\in \match\calg_{56}}  x(\varphi(P))y(\varphi(P)),
\end{align*}
and therefore it suffices to show the following lemma.
\end{proof}

\begin{lem} \label{lem 72} Let $s'\le t$.
\begin{itemize}
 \item[(a)] $x(\varphi(P))\ y(\varphi(P))=x(P) y(P)$ if $P\in \Match(\calg_1)$.
 \item[(b)] $x(\varphi(P))\ y(\varphi(P))= x(\tcalg) y(\tcalg_{max}) x(P)y(P) $ if $P\in \match(\calg_{56})$.
\end{itemize}
\end{lem}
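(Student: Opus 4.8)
The plan is to prove the two identities by separating the edge-weight $x$ from the height monomial $y$, and within each to treat (a) — where $\varphi$ only redistributes the edges of a single matching of $\calg_1$ — before (b), where the genuinely new feature is the completion of a matching of $\calg_{56}$ by boundary edges across the two regions that are collapsed in forming $\calg_{56}$. Throughout I would use the explicit description of $\varphi$ from Section~\ref{sect 3.4} together with the facts recorded there and in Remark~\ref{rem 3.4}: in case (a) the sets $\varphi_3(P)$ and $\varphi_4(P)$ partition the edges of $P$ (with $\varphi_4(P)\subseteq P$ and $\varphi_3(P)=P\setminus\varphi_4(P)$), while in case (b) the sets $\varphi_3(P)\setminus P$ and $\varphi_4(P)$ are complementary boundary matchings that do not depend on $P$.

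For the $x$-weight, case (a) is immediate: the embeddings $\calg_1[1,t],\calg_1[t'+1,d]\hookrightarrow\calg_3$ and $\calg_4'=(\calg_4^\circ)_e\hookrightarrow\calg_1$ are label preserving, so the partition of the edges of $P$ gives $x(\varphi_3(P))\,x(\varphi_4(P))=x(P)$. For case (b) I would note that $\tcalg$ is the union of the two sub-snake-graphs $\calg_1[s,s'-1]$ and $\calg_1[t+1,t']$ that are collapsed in passing to $\calg_{56}$, and that the periodicity of the self-overlap forces $x(\calg_1[s,s'-1])=x(\calg_1[t+1,t'])$, so that \eqref{eq1} gives $x(\tcalg)=x(\calg_1[s,s'-1])^2$. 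The required identity $x(\varphi_3(P)\setminus P)\,x(\varphi_4(P))=x(\tcalg)$ is then exactly the band-graph analogue of the ``missing tiles'' computation carried out for the crossing case in \cite[Theorem~6.1]{CS}; I would reduce $\calg_4^\circ$ to that computation by cutting it along its glueing edge to recover the snake graph $\calg_4'$, so that the glued weight is accounted for once, and then run the same telescoping of boundary-edge weights along the overlap.

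For the $y$-weight I would use the symmetric-difference description of the height monomial (Lemma~\ref{thm y}, Definition~\ref{height}): $y(Q)$ is determined by the tiles enclosed by the cycles of $Q\ominus P_-$, and more generally $y(Q)/y(Q')$ is read off from the tiles enclosed by $Q\ominus Q'$. Because $\varphi$ is built from restriction to subgraphs, a single tile-flip in $\calg_1$ restricts to a tile-flip of the same diagonal label in whichever of $\calg_3,\calg_4^\circ$ contains that tile; hence the quotient $y(\varphi_3(P))\,y(\varphi_4(P))/y(P)$ is invariant under flips and therefore constant on $\match\calg_1$, and evaluating it at a single matching where it can be computed directly pins the constant to $1$, proving (a) for $y$. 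For the band graph $\calg_4^\circ$ the flips are detected through a good matching, i.e.\ after cutting along an interior edge, and I would verify flip-compatibility there directly from the definition of good matching. In case (b) the same flip-invariance reduces the statement to a single matching of $\calg_{56}$; Lemma~\ref{lemNE} forces the boundary completion uniquely, and identifying it as the maximal boundary matching on the distinguished component yields precisely the extra factor $y(\tcalg_{max})$, trivial on the rest of $\tcalg$.

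The hard part will be the $y$-bookkeeping across the band graph together with the fact that $\tcalg$ carries the two twin regions $\calg_1[s,s'-1]$ and $\calg_1[t+1,t']$: I must check that the enclosed-tile contributions of the completion on the $\calg_3$-copy and of $\varphi_4(P)$ on $\calg_4^\circ$ combine to the single monomial $y(\tcalg_{max})$ rather than to an uncontrolled product, and that the minimal matchings of $\calg_1$, $\calg_3$ and $\calg_4^\circ$ are compatible enough for the flip-invariance argument to fix the constant at $1$ in (a) and at $y(\tcalg_{max})$ in (b). I expect the cleanest route to be to invoke the weight-preservation of the switching operation of \cite{CS} on the non-intersecting portions of the overlap and to handle the intersection region $\calg_1[s',t]$ by hand, exactly as $\varphi$ was built in Section~\ref{sect 3.4}.
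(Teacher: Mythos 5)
Your part (b) is essentially the proof the paper gives. The paper observes that $\tcalg$ consists of two copies of the tiles of $\calg_4^\circ$, that $\varphi(P)=P\cup P_-\cup P_+$ where $P_-,P_+$ are the minimal and maximal matchings of $\calg_4^\circ$ (this is your ``complementary boundary completions''), and then computes $x(P_-)x(P_+)=x(\calg_4^\circ)^2=x(\tcalg)$ and $y(P_-)=1$, $y(P_+)=y(\calg_4^\circ)=y(\tcalg_{max})$; your periodicity remark $x(\calg_1[s,s'-1])=x(\calg_1[t+1,t'])$ and the ``telescoping'' reduction amount to the same calculation. Note that in (b) your flip-invariance wrapper is unnecessary: by Remark~\ref{rem 3.4} the completion is independent of $P\in\match(\calg_{56})$, so one simply computes it once. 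Your $x$-weight argument in (a) (the edges of $P$ are partitioned, label-preservingly, between $\varphi_3(P)$ and $\varphi_4(P)$) is also fine.

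The genuine gap is in your $y$-weight argument for (a). The key assertion --- that a single tile-flip in $\calg_1$ ``restricts to a tile-flip in whichever of $\calg_3,\calg_4^\circ$ contains that tile'' --- is not even well posed in the intersecting case $s'\le t$: a tile of the intersection $\calg_1[s',t]$ does not belong to a fixed one of $\calg_3$ or $\calg_4^\circ$. Which edges of $P$ are routed to $\calg_4^\circ$ is governed by the first edge $e$ of $P$ in $\calg_1[s,2s'-s-1]$ that is interior in $\calg_4^\circ$, and this cut depends on $P$. A flip at a tile incident to $e$ can delete $e$ (or create an earlier interior edge), whereupon the subgraph $\calg_4'$ is replaced by a different one, shifted along the periodic region; the decompositions of $P$ and of the flipped matching $P'$ then differ on many edges, and the statement that $\varphi(P)$ and $\varphi(P')$ differ by a single flip --- equivalently, flip-invariance of the ratio $y(\varphi_3(P))y(\varphi_4(P))/y(P)$ --- is exactly what has to be proved, not assumed. (A secondary point: the evaluation of the constant and the height monomials on $\calg_4^\circ$ must be handled through good matchings of the band graph, i.e.\ after cutting.) This verification is precisely what the paper's one-line proof of (a) outsources to \cite[Lemma 6.2]{CS}, where weight preservation is established by a case analysis of the local configurations at the cut/switching position; your closing suggestion to invoke the switching results of \cite{CS} and treat the intersection ``by hand'' is indeed the right fallback, but as written the proposal does not carry out that analysis, and it is the real content of the lemma.
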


\begin{proof}
 (a) This is \cite[Lemma 6.2]{CS}.\\
(b)  Since $s'<t$, we see that $\tcalg$ consists of two copies of each tile in $\calg_4^\circ$. By definition
of $\varphi$ in this case, we have
\begin{equation} \label{eq2}
\varphi(P) = P\cup P_-\cup P_+
\end{equation}
where $P_-$ is the minimal matching of $\calg_4^\circ$ and $P_+$ is its maximal matching, since $\varphi(P)$ is extended to a matching on $\calg_1$ using boundary edges that are complementary on $\calg_1[s, s'-1]$ and $\calg_1[t+1, t']$.
Now $x(P_-)x(P_+)=x(\calg_4^\circ)x(\calg_4^\circ)$
and thus
\[x(\varphi(P))=x(P)x(P_-)x(P_+)=x(P)(x(\calg_4^\circ))^2=x(P)x(\tcalg).\]

Moreover, $\tcalg_{max}=\calg_4^\circ$ and thus
\[y(\varphi(P))=y(P)y(P_+)=y(P)y(\calg_4^\circ)=y(P)y(\tcalg_{max}).\]
\end{proof}

\subsection{Empty overlaps}
\subsubsection{Two arcs crossing} Now let $\gamma_1$ and $\gamma_2$ be two arcs which cross in a triangle $\Delta$ with an empty overlap. We may assume without loss of generality that $\Delta$ is the first triangle $\gamma_2$ meets. Let $x_{\gamma_1}$ and $x_{\gamma_2}$ be the corresponding cluster variables and $\calg_1$ and $\calg_2$ be their associated snake graphs, respectively. We know from Theorem \ref{smoothing2} that the snake graphs of the arcs obtained by smoothing the crossing of $\gamma_1$ and $\gamma_2$ are given by the resolution $\gt s{\e_3}12$ of the grafting of $\calg_2$ on $\calg_1$ in $G_s,$ where $s$ is such that $\Delta=\Delta_s$ is the triangle $\gamma_1 $ meets after its $s$-th crossing point, and, if $s=0,$ then $\e_3$ is the unique side of $\Delta$ which is not crossed neither by $\gamma_1$ nor $\gamma_2.$

The edge of $G_s$ which is the glueing edge for the grafting is called the {\em grafting edge}. We say that the grafting edge is {\em minimal in $\calg_1$} if it belongs to the minimal matching on $\calg_1.$

Recall that $\gt s {\e_3}12$ is a pair $(\calg_3 \sqcup \calg_4), (\calg_5 \sqcup \calg_6).$ Let
$\tcalg_{34}$ be the union of all tiles in $\calg_1\sqcup\calg_2$ that are not in $\calg_3\sqcup\calg_4$ and
let
$\tcalg_{56}$ be the union of all tiles in $\calg_1\sqcup\calg_2$ that are not in $\calg_5\sqcup\calg_6$.
Define

\begin{align*}
 \call (\gt s{\e_3}12) = y_{34} \call (\calg_3 \sqcup \calg_4) + y_{56} \call (\calg_5 \sqcup \calg_6),
\end{align*}
where

\begin{align} \label{skeincoeff}
\left\{\begin{array}{llll}
 y_{34}=1& \textup{and} &y_{56} = y(\tcalg_{56})
 & \mbox{ if the grafting edge is minimal in $\calg_1$; } \\
  y_{34} = y(\tcalg_{34})
  &\textup{and} &y_{56}=1 
  & \mbox{ otherwise. } 
\end{array}\right.
\end{align}
The following result has been shown in \cite{CS}.
\begin{thm}\cite[Theorem 6.3]{CS} \label{laurent2}
 With the notation above, we have
\begin{align*}
 \la 12 = \call (\gt s{\e_3}12).
\end{align*}
\end{thm}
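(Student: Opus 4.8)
The plan is to prove the self-crossing analogue (part (2)) by reducing it, case by case, to the grafting identity of part (1), which is \cite[Theorem 6.1]{CS}, and to the weight-preserving property of the switching operation established there. The only genuinely new content beyond the bijections already constructed in section~\ref{sect 3} is the bookkeeping of the $x$- and $y$-monomials, so the argument is essentially a verification that the bijections $\varphi$ are weight preserving once one accounts for the missing tiles through the factor $y(\tcalg_{max})$. Accordingly I would split the proof along exactly the same case division as the definition of the resolution: the same direction with $s'>t+1$, the same direction with $s'=t+1$, the opposite direction, and the same direction with $s'\le t$.

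First I would dispose of the cases $s'>t+1$ and the opposite direction by remarking that the bijection $\varphi$ of sections~\ref{sect 3.3} and the opposite-direction subsection is built entirely from the switching operation together with restriction to the unchanged pieces. Since switching is weight preserving on labeled snake and band graphs coming from an unpunctured surface (this is the essential step of the proof of (1)), the map $\varphi$ preserves both $x(P)$ and $y(P)$ on the matchings $P_1$ that admit a switching position. For the matchings $P_1$ without a switching position, $\varphi(P_1)=P_1|_{\calg_{56}}$ (respectively $(P_5,P_6)$), and the unique completion back to $\calg_1$ adds a fixed set of boundary edges whose $y$-contribution is exactly $y(\tcalg_{max})$; this is precisely the discrepancy absorbed by the factor $y(\tcalg_{max})$ in~\eqref{laurentself}. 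Thus these two cases are verbatim analogues of (1).

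Next I would treat $s'=t+1$. Here one separates according to whether the secondary overlaps $j_1(\calh)$ and $j_2(\calh)$ cross. If they do, the situation is governed by the same mechanism as $s'\le t$ and I would defer it to that case. If they do not cross, the bijection agrees with the $s'>t+1$ analysis on all matchings except the one special type $P_{\tiny\smiley}$ of section~\ref{sect s'=t+1}, so the heart of the verification is showing $x(\varphi(P_{\tiny\smiley}))=x(P_{\tiny\smiley})$ and $y(\varphi(P_{\tiny\smiley}))=y(P_{\tiny\smiley})$. For the $x$-equality it suffices to show that the one edge where $\varphi$ is not simply a restriction---the boundary edge of $G_{s-1}^{N\!E}$, which is sent to the edge corresponding to $e_{t'}$---carries the same weight. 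Using $i_s=i_{s'}$, $i_t=i_{t'}$, the overlap conditions $i_{s-1}\ne i_{s'-1}$ and $i_{t+1}\ne i_{t'+1}$, and the hypothesis $s'=t+1$, one sees that $\tau_{i_s}$ and $\tau_{i_t}$ are two distinct sides of a single triangle $\Delta$, whence $\tau_{i_{s-1}}=\tau_{i_{t'+1}}$ is the third side; a direct reading off of the triangle identifies both the weight of $e_{t'}$ and the weight of the boundary edge of $G_{s-1}^{N\!E}$ with $i_s$. For the $y$-equality one checks that swapping the edges on the pieces coming from $j_1(\calh)$ and $j_2(\calh)$ leaves the total enclosed set of tiles, and hence the $y$-monomial, unchanged.

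Finally, the main obstacle is the intersecting case $s'\le t$, where $\calg_{56}$ is a negative snake graph and $\tcalg$ consists of two copies of each tile of $\calg_4^\circ$. I would isolate the two monomial identities as Lemma~\ref{lem 72}. Part (a) for $P\in\Match(\calg_1)$ is already \cite[Lemma 6.2]{CS}. Part (b), for $P\in\match(\calg_{56})$, is the crux: using the defining formula $\varphi(P)=P\cup P_-\cup P_+$ with $P_-,P_+$ the minimal and maximal matchings of $\calg_4^\circ$, one computes $x(P_-)x(P_+)=x(\calg_4^\circ)^2=x(\tcalg)$, so $x(\varphi(P))=x(P)x(\tcalg)$, and $\tcalg_{max}=\calg_4^\circ$ together with $y(P_+)=y(\calg_4^\circ)$ gives $y(\varphi(P))=y(P)y(\tcalg_{max})$. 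Substituting these into the definition of $\call$ and using the identity $x(\calg_1)=x(\calg_3\sqcup\calg_4^\circ)=x(\calg_{56})x(\tcalg)$ from~\eqref{eq1} collapses the two summands of $\call(\calg_3\sqcup\calg_4^\circ)$ into exactly $\call(\calg_1)+y(\tcalg_{max})\call(\calg_{56})$, which is the desired equality. The delicate point throughout is keeping the sign conventions for the negative graph $\calg_{56}$ consistent with $\call(\calg_k)=-\call(-\calg_k)$, and verifying that the completion of $P_{56}$ to a matching of $\calg_1$ really is independent of $P_{56}$ so that the factor $y(\tcalg_{max})$ is well defined; both follow from the structure of $\varphi$ and Remark~\ref{rem 3.4}.
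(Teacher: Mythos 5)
Your proposal does not address the statement in question. Theorem~\ref{laurent2} is the \emph{empty-overlap} identity: $\gamma_1$ and $\gamma_2$ are two distinct arcs crossing in a triangle $\Delta$ with an empty local overlap, and the claim is $\call(\calg_1\sqcup\calg_2)=\call(\graft_{s,\e_3}(\calg_1,\calg_2))$, where the right-hand side is the \emph{grafting} resolution of Definition~\ref{grafting} with coefficients $y_{34},y_{56}$ given by (\ref{skeincoeff}), i.e.\ determined by whether the grafting edge belongs to the minimal matching of $\calg_1$. What you prove instead is Theorem~\ref{laurent}(2), the identity $\call(\calg_1)=\call(\ree 1)$ for a single self-crossing snake graph with a \emph{non-empty} overlap; the two statements are easy to conflate because of the labels (``laurent2'' versus ``laurent, part~(2)''), and your opening sentence (``the self-crossing analogue (part~(2))'') confirms the mix-up. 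Every ingredient of your argument --- the case division $s'>t+1$, $s'=t+1$, $s'\le t$ and opposite direction, the switching operation in the overlap, the special matching $P_{\tiny\smiley}$, the negative snake graph $\calg_{56}$, Lemma~\ref{lem 72}, and the factor $y(\tcalg_{max})$ --- lives in the non-empty overlap setting and is vacuous for Theorem~\ref{laurent2}, where there is no overlap to switch in at all.

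For the record, the paper does not reprove Theorem~\ref{laurent2}: it is quoted from \cite[Theorem 6.3]{CS} (``The following result has been shown in \cite{CS}''). A self-contained proof would have to run along different lines from anything in your proposal: take the grafting bijection $\match(\calg_1\sqcup\calg_2)\to\match(\graft_{s,\e_3}(\calg_1,\calg_2))$ of Theorem~\ref{bijections}(2), check that it preserves $x$-weights, and then track the $y$-weights, which are \emph{not} preserved on the nose but only up to the factors $y_{34}$ and $y_{56}$; which of the two pairs $(\calg_3\sqcup\calg_4)$, $(\calg_5\sqcup\calg_6)$ acquires the nontrivial factor is governed by the dichotomy of whether the grafting edge is minimal in $\calg_1$, and this dichotomy --- the actual content of (\ref{skeincoeff}) --- appears nowhere in your write-up. (As a side remark, your argument for Theorem~\ref{laurent}(2) itself follows that theorem's proof in the paper essentially verbatim, so the mathematics you wrote is sound; it is simply aimed at the wrong target.)
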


\subsubsection{Self-crossing} 
Similarly, if $\gamma_1$ is a generalized arc which self-crosses in a triangle $\Delta$ with an empty overlap, let $\calg_1$ be the associated snake graph and $x_{\gamma_1}$ be the corresponding Laurent polynomial. We know from Theorem \ref{smoothingselfgrafting} that the snake graphs of the arcs obtained by smoothing the self-crossing of $\gamma_1$ are given by the resolution $\graft _{s,{\e_3}}(\calg_1)$ of the self-grafting of $\calg_1$  in $G_s,$ where $s$ is such that $\Delta=\Delta_s$ and,  if $s=d,$ then $\e_3$ is the unique side of $\Delta$ which is not crossed by $\gamma_1$.

Let $\tcalg_{34}$ be the union of tiles in $\calg_1$ that are not in $\calg_3
\sqcup\calg^{\circ}_4$ and $\tcalg_{56}$ be the union of tiles in $\calg_1$ that are not in $\calg_{56}.$

If $s<d$,  then  $\e_3 $ is  the north or the east edge in $G_s$, and   we let 
\begin{equation}\label{eq722a}
y_{34} =\left\{\begin{array}{ll}  y(\tcalg_{34})  &\textup{if $\delta_3$ is maximal in $\calg_1$;}\\
           1& \textup{otherwise;}\end{array}\right.
\qquad
y_{56}= \left\{\begin{array}{ll}1 &\textup{if $\delta_3$ is maximal in $\calg_1$}\\
           y(\tcalg_{56}) &\textup{otherwise;}\end{array}\right.
\end{equation}

If $s=d$, then $\calg_1$ and $\calg_{3}\sqcup\calg^{\circ}_4$ have the same tiles, so 
$y_{34}=1.$
On the other hand,  $\calg_1 \setminus\calg_{56}$ has two components $\calg_1[1,k'-1]$ containing the glueing edge $\delta_3'$ and  $\calg_1[k+1,d]$ containing the glueing edge  $\delta_3$. We let
\begin{equation}\label{eq722b}y_{34}=1\qquad y_{56}=y_{56}' y_{56}'' ,\end{equation}
where
\[y_{56}'=\left\{\begin{array}{ll}  y(\calg_1[1\,,k'\!-\!1]) &\textup{if  $\delta_3'$ is minimal in $\calg_1$};\\
           1&\textup{otherwise;}\end{array}\right.\]
\[y_{56}''= \left\{\begin{array}{ll}y(\calg_1[k+1\,,d]) &\textup{if  $\delta_3$ is minimal in $\calg_1$};\\
          1 & \textup{otherwise. }\end{array}\right.\]
%
With this notation define

\begin{align*}
 \call (\graft _{s,{\e_3}}(\calg_1)) = y_{34} \call (\calg_3 \sqcup \calg_4^\circ) + y_{56} \call (\calg_{56}).
\end{align*}

\begin{thm} \label{laurent2self}
 With the notation above, we have
\begin{align*}
 \call (\calg_1) = 
 \call (\graft _{s,{\e_3}}(\calg_1)) 
\end{align*}
\end{thm}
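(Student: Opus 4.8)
The plan is to reduce Theorem~\ref{laurent2self} to the already established grafting result, Theorem~\ref{laurent2}, by the same puncture-introduction technique used in the proof of Theorem~\ref{smoothingselfgrafting}. First I would recall that the self-grafting resolution $\graft_{s,\e_3}(\calg_1)$ was obtained geometrically by introducing a puncture on the segment of $\gamma_1$ between its two crossing points in $\Delta$, completing to a triangulation, and then applying the ordinary grafting of the two resulting segments $\gamma_{11}$ and $\gamma_{12}$. Thus the combinatorial identity $\call(\calg_1)=\call(\graft_{s,\e_3}(\calg_1))$ should follow by tracking how the $x$- and $y$-weights behave under the bijection $\varphi$ of Theorem~\ref{thm 3.11}, exactly parallel to the proof of part~(1) of Theorem~\ref{laurent}.

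The key steps, in order, are as follows. First, I would establish the analogue of equation~(\ref{eq1}), namely that $\calg_1$ and $\calg_3\sqcup\calg_4^\circ$ carry the same multiset of tiles, so that $x(\calg_1)=x(\calg_3\sqcup\calg_4^\circ)$, while $\tcalg$ (the tiles of $\calg_1$ not in $\calg_{56}$) satisfies $x(\calg_1)=x(\calg_{56})x(\tcalg)$. Second, using the bijection $\varphi\colon\match\calg_1\to\match(\graft_{s,\e_3}(\calg_1))$ from Theorem~\ref{thm 3.11} — which is the grafting bijection of \cite[Figure~11]{CS} and hence weight-preserving by the switching argument — I would split $\match\calg_1$ into those matchings sent to $\match(\calg_3\sqcup\calg_4^\circ)$ and those sent to $\match\calg_{56}$. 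The weight-preserving property gives $x(\varphi(P))y(\varphi(P))=x(P)y(P)$ on the first class directly, so these contribute $\call(\calg_3\sqcup\calg_4^\circ)$. Third, I would prove the analogue of Lemma~\ref{lem 72}(b): for $P\in\match\calg_{56}$, the completion $\varphi(P)=P\cup P_-\cup P_+$ by complementary boundary matchings on $\tcalg$ yields $x(\varphi(P))=x(P)x(\tcalg)$ and $y(\varphi(P))=y(P)y(\tcalg_{max})$, so these matchings contribute the factor $y(\tcalg_{max})$ multiplying $\call(\calg_{56})$. Matching $y(\tcalg_{max})$ against the prescribed coefficients $y_{34},y_{56}$ in (\ref{eq722a}) and (\ref{eq722b}) — which record whether $\delta_3$ (resp.\ $\delta_3',\delta_3$) is maximal (resp.\ minimal) in $\calg_1$ — then completes the computation.

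The main obstacle I expect is the case $s=d$, where $\calg_4^\circ$ is a genuine band graph and the $y$-coefficient $y_{56}=y_{56}'y_{56}''$ factors according to the two components $\calg_1[1,k'-1]$ and $\calg_1[k+1,d]$ of $\calg_1\setminus\calg_{56}$, with separate minimality conditions on the two glueing edges $\delta_3'$ and $\delta_3$. Here I would have to check carefully that the completion by boundary edges in $\varphi$ produces precisely the two height contributions $y(\calg_1[1,k'-1])$ and $y(\calg_1[k+1,d])$ exactly when $\delta_3'$ and $\delta_3$ are minimal, using Lemma~\ref{lemNE} to control which boundary edges enter the completion from their signs relative to $f(\delta_3)$. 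The subtlety is that the band structure means a \emph{good} matching of $\calg_4^\circ$ corresponds to cutting along a suitable interior edge, so I must verify that the $y$-monomial of the completed part is genuinely maximal on a single connected component $\tcalg_{max}$ and trivial elsewhere, matching the bookkeeping in the self-grafting definition of section~\ref{self-grafting}.

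Once these weight identities are in hand, substituting them into the definition of $\call(\calg_3\sqcup\calg_4^\circ)$ and regrouping exactly reproduces $y_{34}\call(\calg_3\sqcup\calg_4^\circ)+y_{56}\call(\calg_{56})=\call(\graft_{s,\e_3}(\calg_1))$, establishing $\call(\calg_1)=\call(\graft_{s,\e_3}(\calg_1))$. Since the geometric content (that smoothing the self-crossing produces the self-grafting resolution) is already provided by Theorem~\ref{smoothingselfgrafting}, and the combinatorial bijection by Theorem~\ref{thm 3.11}, the proof is essentially a weight-tracking argument importing Theorem~\ref{laurent2} through the removed puncture, with all genuine novelty concentrated in the band-graph bookkeeping of the $s=d$ case.
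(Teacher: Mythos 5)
Your opening sentence names the paper's actual proof --- introduce a puncture on the segment of $\gamma_1$ between its two crossing points, observe that the two resulting segments still cross with empty overlap, apply Theorem~\ref{laurent2}, and remove the puncture (exactly as in Theorems~\ref{thm430} and~\ref{smoothingselfgrafting}, using the tile-deletion rules of section~\ref{sect homotopy}) --- but you then abandon that reduction and instead execute a direct weight-tracking argument, and that execution has a genuine error. Your first step asserts that $\calg_1$ and $\calg_3\sqcup\calg_4^\circ$ carry the same tiles, so that $x(\calg_1)=x(\calg_3\sqcup\calg_4^\circ)$. That identity is what holds for the resolution of a \emph{self-crossing} in the same direction, and it is why (\ref{eq1}) is available in the proof of Theorem~\ref{laurent}; it fails for \emph{self-grafting} when $s<d$. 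Indeed, there $\calg_3=\calg_1\setminus\pred(e)=\calg_1[i+1,d]$, where $e=e_i$ is the first edge in $\Int(\calg_1[s+1,d])\cup\calgNE_1$ with $f(e)=f(\e_3)$, while $\calg_4^\circ=(\calg_1[1,s])^{\e_3}$; hence the tiles $G_{s+1},\ldots,G_i$ (at least $G_{s+1}$, since $i\ge s+1$) appear in neither. Consequently your second step --- that matchings of $\calg_1$ sent to $\match\calg_3\times\match\calg_4^\circ$ contribute $\call(\calg_3\sqcup\calg_4^\circ)$ with coefficient $1$ --- contradicts the theorem itself: by (\ref{eq722a}), when $\e_3$ is maximal in $\calg_1$ the correct coefficient is $y_{34}=y(\tcalg_{34})\neq 1$, and it is $\call(\calg_{56})$ that carries coefficient $1$. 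Note also that the case you single out as the main obstacle, $s=d$, is the one where your first step is actually valid (there $\calg_4^\circ=\calg_1^{\e_3}$ contains every tile and $y_{34}=1$ by (\ref{eq722b})); the breakdown is in the case $s<d$ that you treat as routine.

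The structural point you are missing is that grafting-type resolutions, unlike crossing-type resolutions, lose tiles in \emph{both} terms; this is precisely what the coefficient dichotomies in (\ref{skeincoeff}), (\ref{eq722a}) and (\ref{eq722b}) record. So the bookkeeping of Lemma~\ref{lem 72} --- one term retains all tiles, the other is corrected by a single factor $y(\tcalg_{max})$ as in (\ref{laurentself}) --- does not transfer. A corrected direct argument would have to track, for matchings landing on either side of the resolution, the weights of the edges discarded over $\tcalg_{34}$ as well as over $\tcalg_{56}$, and show that each canonical completion is minimal or maximal according to whether $\e_3$ lies in the minimal or the maximal matching of $\calg_1$. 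The paper's proof sidesteps all of this: the puncture trick converts the self-crossing with empty overlap into a crossing of two arcs with empty overlap, Theorem~\ref{laurent2} (whose coefficients already have the required two-sided structure) applies there, and removing the puncture transports the identity back to $\calg_1$. Your proposal becomes correct if you carry out the reduction you announce in your first sentence rather than the computation that follows it.
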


\begin{proof}
As in the proof of Theorem \ref{thm430}, we can introduce a puncture on the segment  of $\gamma_1$
between the two crossing points and complete to a triangulation. Then the two segments of $\gamma_1$ before and after the puncture still have the same crossing with empty overlap. Applying Theorem \ref{laurent2} to this situation and then removing the puncture will complete the proof.
\end{proof}

\subsection{Skein relations}\label{sect skein}
As a corollary we obtain a new proof of the skein relations.
\begin{cor} \label{skein}
Let $C,$ $ C_{+}$, and $C_{-}$ be as in Definition \ref{def:smoothing} but not including any closed loops. 
Then we have the following identity in the cluster algebra $\cala$,
\begin{equation*}
x_C = \pm Y_1 x_{C_+} \pm Y_2 x_{C_-}.
\end{equation*}
Moreover the coefficients $Y_1$ and $Y_2$ are given by
\textup{(\ref{laurent12}), (\ref{laurentself}), (\ref{skeincoeff}), (\ref{eq722a}) and (\ref{eq722b}).}\qed
\end{cor}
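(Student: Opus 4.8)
The plan is to derive the skein relation Corollary~\ref{skein} as a direct consequence of the machinery already assembled, rather than proving anything geometric from scratch. The key observation is that all the hard work has been done: Theorems~\ref{thm cross}, \ref{smoothing1}, \ref{thm430}, \ref{smoothing2}, and \ref{smoothingselfgrafting} establish that the geometric smoothing of a crossing or self-crossing of arcs corresponds exactly to the combinatorial resolution (or grafting) of the associated snake graphs, and Theorems~\ref{laurent}, \ref{laurent2}, and \ref{laurent2self} establish that the Laurent polynomial $\call$ is preserved under these resolutions. So the proof is essentially an assembly of these pieces, being careful to translate between the normalization $\call(\calg)$ used for abstract snake graphs and the actual cluster-algebra element $x_\gamma$ defined via $\frac{1}{\cross(T,\gamma)}\sum_P x(P)y(P)$.

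First I would record the precise dictionary between $x_\gamma$ and $\call(\calg_\gamma)$. By Definition~\ref{def:matching}(2) and the definition of $\call(\calg)=\frac{1}{x(\calg)}\sum_{P}x(P)y(P)$, one has $x_\gamma=\call(\calg_\gamma)$ once we identify $\cross(T,\gamma)=\prod_j x_{\tau_{i_j}}$ with $x(\calg_\gamma)=\prod_{G}x_{i(G)}$, since the tiles of $\calg_\gamma$ are indexed exactly by the crossing points of $\gamma$ with $T$. I would then split into the four geometric situations that Definition~\ref{def:smoothing} produces: (i) two arcs crossing with non-empty overlap, (ii) a self-crossing arc with non-empty overlap, (iii) two arcs crossing with empty overlap, and (iv) a self-crossing arc with empty overlap. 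In each case the relevant smoothing theorem identifies the snake graphs of $C_+$ and $C_-$ with the two summands of the appropriate resolution, and the corresponding Laurent-polynomial theorem gives the identity $\call(\calg_{C})=\call(\text{resolution})$, where the right-hand side is by construction $Y_1\call(\text{first summand})+Y_2\call(\text{second summand})$ with the coefficients read off from \textup{(\ref{laurent12})}, \textup{(\ref{laurentself})}, \textup{(\ref{skeincoeff})}, \textup{(\ref{eq722a})} and \textup{(\ref{eq722b})}. Translating back through the dictionary yields $x_C=\pm Y_1 x_{C_+}\pm Y_2 x_{C_-}$.

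The main point requiring care — and what I expect to be the chief obstacle — is the bookkeeping of \emph{signs} and of the monomial factors $x(\tcalg)$ that appear in Lemma~\ref{lem 72}(b). In the self-crossing cases with $s'\le t$ or $s'=t+1$ the resolution $\calg_{56}$ enters as a \emph{negative} element of $\mathcal{R}$, and the factor $x(\tcalg)$ in $\call(\calg_1)=\call(\res_\calg(\calg_1))$ must be reconciled with $\cross(T,\gamma)$ for the smoothed curve, because $C_+$ or $C_-$ may be a curve with a contractible kink (giving a sign $-1$ via Definition~\ref{def:matching}(1) or Definition~\ref{def closed loop}(2)) or a boundary segment (giving $x_\tau=1$). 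I would handle this by matching, case by case, the combinatorial sign in the definition of $\res_\calg(\calg_1)$ against the geometric sign coming from kinks, using the explicit descriptions in section~\ref{sect resolution} together with the geometric realizations indicated in Figures~\ref{fig(s'<t)}--\ref{opposite2} and the analysis in the proof of Theorem~\ref{thm430}.

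Finally I would note that since the corollary explicitly excludes closed loops from $C$, $C_+$, $C_-$, the smoothings we need are only those producing arcs (and possibly arcs with kinks or boundary segments), so the band-graph normalization $\call(\calg^\circ)$ is never invoked on the curve being resolved; it appears only internally inside the already-proven resolution theorems. This keeps the argument within the scope of the stated results. I would close by remarking that the monomials $Y_1,Y_2$ obtained this way agree with the lamination description in Theorem~\ref{th:skein1}, since both compute the same $y$-contribution $y(\tcalg_{max})$, so that this gives a genuinely new and fully combinatorial derivation of the skein relations.
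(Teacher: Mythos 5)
Your proposal is correct and follows essentially the same route as the paper: the paper states this corollary with no further argument (the \qed is immediate) precisely because it is the assembly of the smoothing--resolution correspondence theorems (Theorems \ref{smoothing1}, \ref{thm430}, \ref{smoothing2}, \ref{smoothingselfgrafting}) with the Laurent polynomial identities (Theorems \ref{laurent}, \ref{laurent2}, \ref{laurent2self}), using the identification $x_\gamma=\call(\calg_\gamma)$ via $\cross(T,\gamma)=x(\calg_\gamma)$, exactly as you spell out. Your additional care about the signs of negative snake graphs versus contractible kinks, and about which cases are excluded by the no-closed-loops hypothesis, is consistent with how the paper's definitions (Definition \ref{def:matching}(1), Definition \ref{def group}, and the signed resolutions of section \ref{sect resolution}) make those bookkeeping issues come out automatically.
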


\section{An example}
{Let $(S,M)$ be the torus with one boundary and one marked point, and consider the two arcs $\gamma_1$ (in black) and $\gamma_2$ (in red) shown in Figure~\ref{torus}. In the left picture, the arcs are drawn directly on the torus and in the right picture, the arcs are drawn in the standard covering of the torus. In the picture on the right, we also have fixed a triangulation whose arcs carry the labels 1,2,3, and 4. 

\begin{figure}[h]
\begin{center}
\scalebox{0.9}{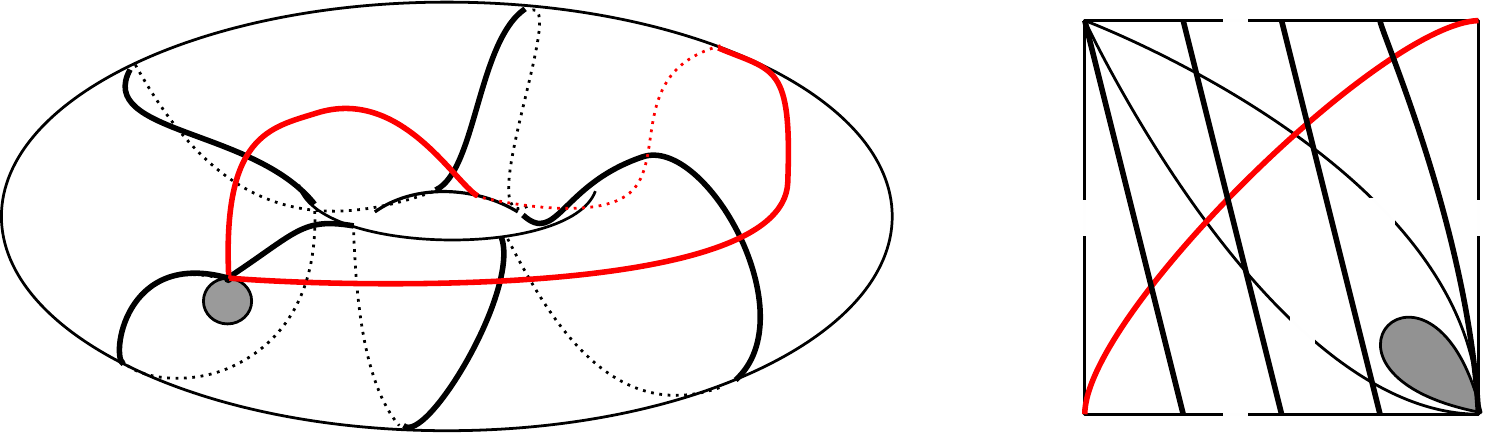}
\caption{Two arcs on the torus}
\label{torus}
\end{center}
\begin{center}
\scalebox{0.9}{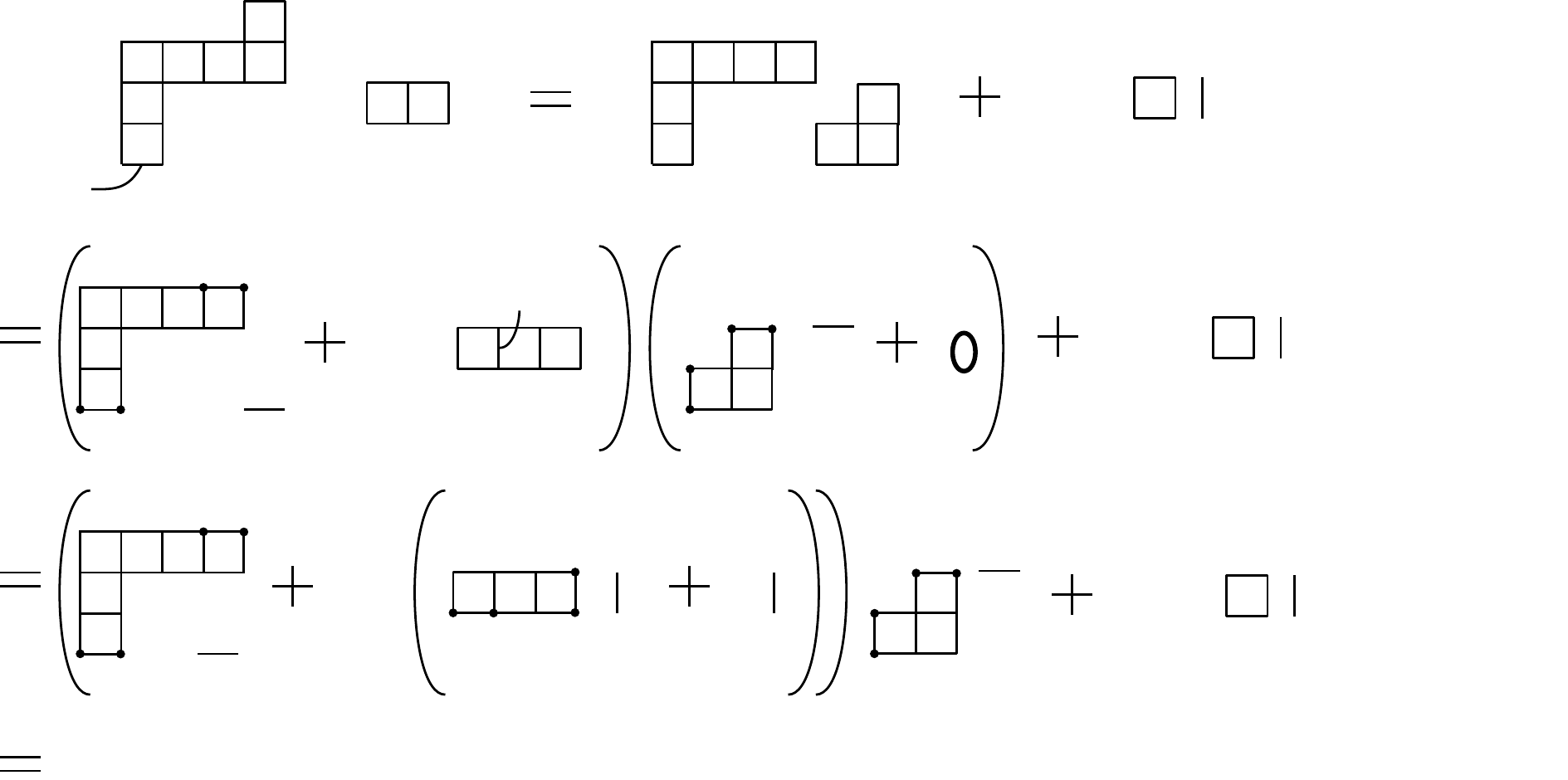}
\caption{Snake graph calculus computing  the product of two cluster variables}
\label{torus2}
\end{center}
\end{figure}

We want to compute the product of the two corresponding cluster variables and express this product in the basis $\B$. This could be done using the smoothing operation 4 times to smooth the 4 crossings of the two arcs. Instead of using the smoothing operation, we do the computation with snake graphs, see Figure~\ref{torus2}.

The first equation in the figure is the resolution of the second crossing overlap given by the tiles with labels 3,4. The graph $\calg_6$ consists of the single edge with label 3, coming from the east edge of the last tile in the snake graph $\calg_1$.
The second equation uses self-grafting with $s=d$ to replace the first snake graph by  a product of the band graph with tiles 1,3,4,1,3,4 and the single edge 2 plus the
selfcrossing snake graph with tiles 4,1,3, and to replace the second snake graph with the product of the band graph with tiles 3,4,1 and the single edge 2 plus zero.
The third equation uses self-grafting with $s=d$ to replace the self-crossing snake graph with tiles 3,4,1 by the product of the corresponding band graph and the single edge $b$ (for boundary). 

For the computation of the coefficients, we choose the orientation such that the south edge of the first snake graph is minimal, see Figure \ref{torus2}. Thus the term $\calg_5\calg_6$ in the first row of the figure carries the coefficient $y(\tcalg_{max})=y_1^2y_3^2y_4^2$.
Recall that the second row is obtained from the first by two self-graftings with $s=d$, thus the two terms $\calg_3\calg_4$ do not carry a coefficient. In the first parenthesis in the second row, the snake graph with tiles 4,1,3 is the term $\calg_{56}$ of the self-grafting. The grafting edge $\zd_3$ is minimal, whereas the grafting edge $\zd_3'$ is maximal, and consequently, the coefficient  $y_{56}'=y_1y_3$ corresponds to the initial segment of the self-crossing snake graph. In the second parenthesis of the second row, the term $\calg_{56}$ is zero and does not carry a coefficient. Recall that the third row is obtained from the second by a self-grafting with $s=d$ on the graph with tiles 4,1,3.
The single edge with label 2 and coefficient $y_4$ is the term $\calg_{56}$ of this self-grafting. The grafting edge $\zd_3$ is minimal, whereas the grafting edge $\zd_3'$ is maximal, and consequently, the coefficient  $y_{56}'=y_4$ corresponds to the initial segment of the self-crossing snake graph.

Finally, in the last row we rewrite the expression as
\[(\textup{Brac}_2\zeta\, x_2 +y_1y_3\,x_\zeta +y_1y_3y_4\,x_2)\,x_\zeta\,x_2+y_1^2y_3^2y_4^2\,x_1'x_3,\]
where $\zeta $ is the loop that crosses 1,3,4 and $x_1'$ is the cluster variable obtained from the initial cluster by mutating in direction $1$.}

{}


\begin{thebibliography}{}
%
%
%
%
\bibitem[BFZ]{BFZ} A. Berenstein, S.  Fomin and A. Zelevinsky, Cluster algebras. III. Upper bounds and double Bruhat cells. \emph{Duke Math. J.\/} {\bf 126}, (2005), no. 1, 1--52. 
%
%
%
%


\bibitem[CLS]{CLS} I. Canakci, K. Lee and R. Schiffler, 
On cluster algebras for surfaces without punctures and one marked point, preprint, {\tt arXiv:1407.5060}.

\bibitem[CS]{CS} I. Canakci and R. Schiffler, Snake graph calculus and cluster algebras from surfaces, {\it J. Algebra\/}, {\bf 382}, (2013) 240--281.


\bibitem[CS3]{CS3} {I. Canakci and R. Schiffler, Snake graph calculus and cluster algebras from surfaces III, in preparation.}

\bibitem[CaSc]{CaSc} {I. Canakci and S. Schroll, Extensions in Jacobian algebras and cluster categories of marked surfaces, preprint.}



%
%
%
%
%
%
%
%
%
%

\bibitem[FeShTu]{FeShTu} A. Felikson, M.  Shapiro and P. Tumarkin, Skew-symmetric cluster algebras of finite mutation type, {\em J. Eur. Math. Soc.\/} {\bf 14}, (2012), 1135--1180.
\bibitem[FeShTu2]{FeShTu2} A. Felikson, M.  Shapiro and P. Tumarkin, Cluster algebras of finite mutation type via unfoldings, {\em Int. Math. Res. Not.\/}  no 8, (2012), 1768--1804.
\bibitem[FeShTu3]{FeShTu3} A. Felikson, M.  Shapiro and P. Tumarkin, Cluster algebras and triangulated orbifolds, {\em Adv. Math.\/} {\bf 231}, 5, (2012),  2953--3002.

\bibitem[FG1] {FG1} V. Fock and A. Goncharov, Moduli spaces of local
  systems and higher Teichm\"uller theory.  {\em Publ. Math. Inst. Hautes
  \'Etudes Sci.}  No. 103  (2006), 1--211. 

\bibitem[FG2]{FG2} V. Fock and A. Goncharov, Cluster ensembles,
  quantization and the dilogarithm,  \emph{Ann. Sci. Ec. Norm. Super.\/} (4) 42, (2009), no. 6, 865--930.
%
%
\bibitem[FST]{FST} S. Fomin, M. Shapiro and D. Thurston, Cluster algebras and triangulated surfaces. Part I: Cluster complexes, \emph{Acta Math.} {\bf 201} (2008), 83-146. 


\bibitem[FT]{FT} S. Fomin and D. Thurston, Cluster algebras and triangulated surfaces. Part II: Lambda Lengths, preprint, {\tt arXiv:1210.5569}.

\bibitem[FZ1]{FZ1} S. Fomin and A. Zelevinsky, Cluster algebras
I: Foundations, \emph{J. Amer. Math. Soc.} \bf 15 \rm (2002),
497--529.
 \bibitem[FZ2]{FZ2} S. Fomin and A. Zelevinsky, Cluster algebras II. Finite type classification. \emph{Invent. Math.\/} {\bf 154} (2003), no. 1, 63--121.
\bibitem[FZ4]{FZ4} S. Fomin and A. Zelevinsky, Cluster algebras IV: Coefficients, \emph{Compositio Mathematica} \bf 143 \rm (2007), 112-164.
%
%
%
%

\bibitem[GSV]{GSV} M. Gekhtman, M. Shapiro and A. Vainshtein,
Cluster algebras and Weil-Petersson forms, Duke Math. J.
{\bf 127} (2005), 291--311.
%
%
%
%
%
%

\bibitem[LS]{LS4} K. Lee and R. Schiffler, Positivity for cluster algebras, preprint, {\tt arXiv:1306.2415v2}.
%

%
\bibitem[L1]{L1} G.
Lusztig, Canonical bases arising from quantized enveloping
algebras. \emph{J. Amer. Math. Soc} {\bf 3} (1990), 447--498.

\bibitem[L2]{L2} G.  Lusztig, \emph{Introduction to quantum groups}, Progress in Mathematics 110,
Birkhauser, 1993.


\bibitem[MSc]{MSc}{R. Marsh and J. Scott, Twists of PlŸcker coordinates as dimer partition functions, preprint, {\tt arXiv:1309.6630}.}
\bibitem[M]{M}{G. Muller, Skein algebras and cluster algebras of marked surfaces, 	preprint, {\tt arXiv:1204.0020}.}


\bibitem[MS]{MS} G. Musiker and R. Schiffler, Cluster expansion formulas and
perfect matchings, \emph{J. Algebraic Combin.} {\bf 32} (2010), no. 2, 187--209.

\bibitem[MSW]{MSW} G. Musiker, R. Schiffler and L. Williams, Positivity for cluster algebras from surfaces,  
\emph{Adv. Math.}  {\bf 227}, (2011), 2241--2308.

\bibitem[MSW2]{MSW2} G. Musiker, R. Schiffler and L. Williams, Bases for cluster algebras from surfaces,  
 \emph{Compos. Math.\/} {\bf 149}, 2, (2013), 217--263.

\bibitem[MW]{MW} G. Musiker and L. Williams, Matrix formulae and skein relations for cluster algebras from surfaces, {\em Int. Math. Res. Not.\/} {\bf 13}, (2013),  2891--2944.%
%
%
%
%
%
%

\bibitem[Pr]{Propp} J. Propp,  The combinatorics of frieze patterns and Markoff numbers, preprint, {\tt 
arXiv:math.CO/0511633}
%
\bibitem[S2]{S2} R. Schiffler, A cluster expansion formula ($A_n$
  case), Electron. J. Combin. 15 (2008), \#R64 1.

\bibitem[S3]{S3} R. Schiffler, On cluster algebras arising from
unpunctured surfaces II, 
\emph{Adv. Math.} {\bf 223}, (2010), 1885--1923.  

\bibitem[ST]{ST} R. Schiffler and H. Thomas, On cluster algebras arising from unpunctured surfaces, \emph{Int. Math. Res. Not.\/} no. 17, (2009), 3160--3189.
%
%
\bibitem[T]{T} D. Thurston, A positive basis for surface skein algebras, preprint, {\tt  arXiv:1310.1959}.
%
%
%

\end{thebibliography}
\end{document}